\let\urlorig\url
\renewcommand{\url}[1]{%
  \begin{otherlanguage}{english}\urlorig{#1}\end{otherlanguage}%
}
\newtheorem{lemma}{Lemma}
\newtheorem{corollary}{Corollary}
\newtheorem{proposition}{Proposition}
\newtheorem{theorem}{Theorem}
\newtheorem{remark}{Remark}
\newtheorem{assumption}{Assumption}
\newtheorem{notation}{Notation}
\renewcommand{\U}{\mathscr{U}}
\newcommand{\V}{\mathscr{V}}
\newcommand{\Potential}{\mathcal{W}}
\newcommand{\Redpot}{\mathscr{R}}
\newcommand{\Redpotc}{\check{\mathscr{R}}}
\newcommand{\dif}{{\rm d}}
\newcommand{\vits}{{u}}
\newcommand{\vol}{{v}}
\newcommand{\f}{{\bf f}}
\newcommand{\g}{{\bf g}}
\newcommand{\fc}{\check{\bf f}}
\newcommand{\Y}{{\mathscr Y}}
\newcommand{\Yc}{\check{\mathscr Y}}
\newcommand{\Ss}{{\mathscr S}}
\newcommand{\ZZ}{{\mathscr Z}}
\newcommand{\bu}{{u}}
\newcommand{\ubu}{\underline{u}}
\newcommand{\bv}{{v}}
\newcommand{\ubv}{\underline{v}}
\newcommand{\h}{{h}}
\newcommand{\bw}{{w}}
\newcommand{\bz}{{z}}
\newcommand{\bzc}{{\check{z}}}
\newcommand{\bZ}{{\bf Z}}
\newcommand{\bY}{{\bf Y}}
\newcommand{\bW}{{\bf W}}
\newcommand{\bF}{{\bf F}}
\newcommand{\bX}{{\bf X}}
\newcommand{\blambda}{\boldsymbol{\lambda}}
\newcommand{\Sb}{{\mathbb{S}}}
\newcommand{\Pb}{{\mathbb{P}}}
\newcommand{\bB}{{\bf B}}
\newcommand{\bS}{{\bf S}}
\newcommand{\bA}{{\bf A}}
\newcommand{\bD}{{\bf D}}
\newcommand{\bU}{{\bf U}}
\newcommand{\ubU}{\underline{\bf U}}
\newcommand{\bmu}{\boldsymbol{\mu}}
\newcommand{\bE}{{\bf E}}
\newcommand{\bV}{{\bf V}}
\newcommand{\bT}{{\bf T}}
\newcommand{\tot}{\!\mbox{\scriptsize tot}}
\newcommand{\R}{{\mathbb R}}
\newcommand{\Z}{{\mathbb Z}}
\newcommand{\N}{{\mathbb N}}
\newcommand{\D}{{\bf D}}
\newcommand{\hess}{\delta^2}
\newcommand{\Euler}{\delta}
\newcommand{\Legendre}{{\sf L}}
\newcommand{\Hess}{\nabla^2}
\newcommand{\Lag}{\mathscr{L}}
\renewcommand{\cap}{{\kappa}}
\newcommand{\free}{{f}}
\newcommand{\En}{\mathscr{E}}
\newcommand{\en}{e}
\newcommand{\Ec}{\mathcal{I}}
\newcommand{\Ham}{\mathcal{H}}
\newcommand{\Mom}{\mathcal{M}}
\newcommand{\Impulse}{\mathcal{Q}}
\newcommand{\impulse}{{q}}
\renewcommand{\d}{\partial}
\def\transp#1{{#1}^{{\sf T}}}
\newcommand{\negsign}{{\sf n}}
\newcommand{\speed}{{c}}
\newcommand{\bC}{{\bf C}}
\newcommand{\EKE}{\mbox{(EKE)}}
\newcommand{\EKL}{\mbox{(EKL)}}
\newcommand{\qKdV}{\mbox{(qKdV)}}
\newcommand{\Action}{{\Theta}}
\newcommand{\Iphi}{I({\phi})}
\begin{document}
\title{Stability of periodic waves in Hamiltonian PDEs of either long wavelength or small amplitude}
\author{S. Benzoni-Gavage\thanks{Universit\'e de Lyon,
CNRS UMR 5208,
Universit\'e Claude Bernard Lyon 1,
Institut Camille Jordan,
43 bd du 11 novembre 1918;
F-69622 Villeurbanne cedex, FRANCE}, C. Mietka and L.M. Rodrigues\thanks{Universit\'e de Rennes 1,
CNRS UMR 6625,
263 av. du General Leclerc;
F-35042 Rennes cedex, FRANCE}}
\maketitle

\begin{abstract}
Stability criteria have been derived and investigated in the last decades for many kinds of periodic traveling wave solutions to Hamiltonian PDEs. They
turned out to depend in a crucial way on the negative signature - or Morse index - of the Hessian matrix of action integrals associated with those waves. 
In a previous paper (published in Nonlinearity in 2016), the authors addressed the characterization of stability of periodic waves for a rather large class of Hamiltonian partial differential equations that includes quasilinear generalizations of the Korteweg--de Vries equation and dispersive perturbations of the Euler equations for compressible fluids, either in Lagrangian or in Eulerian coordinates. They derived a sufficient condition for orbital stability with respect to co-periodic perturbations, and a necessary condition for spectral stability, both in terms of the negative signature of the Hessian matrix of the action integral. Here the asymptotic behavior of this matrix is investigated in two asymptotic regimes, namely for small amplitude waves and for waves approaching a solitary wave 
as their wavelength goes to infinity. The special structure of the matrices involved in the 
expansions makes possible to actually compute the negative signature of the action Hessian both in the harmonic limit and in the soliton limit. As a consequence, it is found that nondegenerate small amplitude waves are 
orbitally stable with respect to co-periodic perturbations in this framework. For waves of long wavelength, the negative signature of the action Hessian is found to be exactly governed by $\Mom''(\speed)$, the second derivative with respect to the wave speed of the Boussinesq momentum associated with the limiting solitary wave. 
This gives an alternate proof of Gardner's result [J. Reine Angew. Math. 1997] according to which the instability of the limiting 
solitary wave, 
when $\Mom''(\speed)$ is negative, implies the instability of nearby periodic waves. Interestingly enough, it is found here that in the inverse situation, when $\Mom''(\speed)$ is positive, nearby periodic waves are orbitally stable with respect to co-periodic perturbations.
\end{abstract}

{\small \paragraph {\bf Keywords:}  Hamiltonian dynamics ; periodic traveling waves ; spectral stability ; orbital stability ;  abbreviated action integral ; Morse index ; harmonic limit ; soliton asymptotics.
}

{\small \paragraph {\bf AMS Subject Classifications:} 35B10; 35B35;  35Q35; 35Q51; 35Q53; 37K05; 37K45.
}

\tableofcontents

\section{Introduction}
In our earlier work \cite{BMR}, we considered a rather large class of Hamiltonian - systems of - partial differential equations (PDEs), and derived stability conditions for their periodic traveling wave solutions. The main purpose of this paper is to investigate these conditions in two asymptotic regimes, namely for waves of small amplitude and for waves of long wavelength. The former are by definition almost harmonic waves close to constant states, while the latter are `close' to solitary waves, in the sense that their trajectories in the phase plane of the governing ordinary differential equations (ODEs) are close to a homoclinic loop.

Roughly speaking, our stability conditions consist of a \emph{necessary} condition \emph{for spectral stability}, and of a \emph{sufficient} condition \emph{for orbital stability} with respect to \emph{co-periodic perturbations} - that is, perturbations of the same spatial period as the underlying wave. Because of a gap in between those conditions our main result in \cite{BMR} can be viewed as a \emph{pseudo-alternative}. 
As will be clear when we actually recall this result (see Theorem~\ref{thm:stabcrit} hereafter), the gap lies mostly in the stability conditions themselves, which are not the reciprocal of each other even if we eliminate degenerate critical situations. The gap is however also present in the kind of stability that is being addressed. On the one hand, orbital stability deals with \emph{nonlinear stability} by overcoming the translation invariance issue, while spectral stability only has to do with stability at the \emph{linearized level}. On the other hand, stability with respect to \emph{co-periodic perturbations} is quite restrictive, whereas when we talk about spectral stability we allow somehow arbitrary, \emph{localized} perturbations, typically in $H^s(\R)$ for $s$ large enough. Our spectral instability criterion provides 
instability with respect to both
co-periodic perturbations and 
localized ones. 

Up to our knowledge, the nonlinear stability of periodic traveling waves with respect to localized perturbations is a widely open question for dispersive PDEs like the ones we are considering. This is in sharp contrast with the breakthrough that was made recently for dissipative PDEs \cite{JNRZ-Inventiones}. Addressing the tough issue of  
nonlinear stability with respect to localized perturbations is not our purpose here. See however \cite{R_linearKdV} for a first step in this direction.

The present paper has a two-fold motivation
. First of all we want to exemplify how conditions derived in \cite{BMR} may be investigated analytically in some asymptotic regimes. We see this as a welcome complement to the development of numerical tools for `arbitrary' periodic waves in the bulk \cite{BMR,Mietka_PhD}, in particular because these tools can hardly attain asymptotic regimes or cover the whole range of parameters. Even more importantly, the asymptotic expansions derived here in the small amplitude and 
solitary wave limits to elucidate stability conditions are \emph{per se} involved in many related issues \cite{BNR-GDR-AEDP,BNR-JNLS14,BMR2-II}, including the understanding of dispersive shocks to be investigated elsewhere. 
An initially unexpected outcome of this work is that it fills the gap in between the stability conditions mentioned above, in those asymptotic regimes.

To give a glimpse of the nature of our pseudo-alternative we can draw a parallel - as in \cite[Remarks~3 \&~6]{BMR} - between our infinite-dimensional stability analysis and the classical stability theory for steady states of finite-dimensional Hamiltonian systems of ODEs. For a Hamiltonian system of size $2d$, $d\in\N^*$, say in canonical coordinates, either the Hamiltonian $H$ or its opposite may serve as a Lyapunov function for the stability analysis of steady states at which it has a local extremum. Speaking in terms of the negative signature - or Morse index - of the Hessian matrix $\nabla^2H$ of that function $H$, that is, the number of negative directions for the associated quadratic form in $\R^{2d}$, we find a local minimum when this signature equals zero and a local maximum when it equals $2d$ - provided that $\nabla^2H$ is nonsingular. In both cases we can thus infer the nonlinear stability of the associated steady state from Lyapunov theory. 
In addition, the linearized system about a nondegenerate steady state - where by definition $\nabla H$ vanishes with $\nabla^2H$ being nonsingular - has an unstable mode as soon as the negative signature of $\nabla^2H$ is odd at that point. This merely follows from a mean value argument between zero and $+\infty$  for the characteristic polynomial of the linearized system matrix $J\nabla^2H$, where $J$ is the skew-adjoint matrix associated with the underlying symplectic form on $\R^{2d}$, since $\det (J\nabla^2H)=(\det J) \,(\det \nabla^2H)$ with
 $\det J=1$ by definition of canonical coordinates and $\det \nabla^2H<0$ if the negative signature of $\nabla^2H$ is odd and $\nabla^2H$ is nonsingular. In summary, at nondegenerate steady states an odd negative signature of $\nabla^2H$ implies spectral instability, whereas a negative signature 
equal to either zero or $2d$ implies nonlinear stability. This is already a kind of pseudo-alternative, which reduces to a true alternative only in the special case $d=1$.
We cannot hope a priori for a better situation in infinite dimensions.

The main achievement in \cite{BMR} has been to turn the infinite-dimensional problem of stability of periodic traveling wave solutions to systems of $N$ PDEs (with actually $N=1$ or $2$) into a pseudo-alternative in dimension $N+2$.
This effective dimension $N+2$ corresponds to the number of independent parameters that determine the periodic waves profiles - in the class of systems 
we are considering - either up to a trivial phase shift parameter or including this phase shift parameter but holding fixed the spatial period.
 
It turns out that a further dimension reduction occurs in the two distinguished limits under consideration here, namely the small amplitude and solitary wave limits, which we shall indifferently refer to as  the \emph{harmonic} and \emph{soliton}\footnote{Throughout the text we endow the term `soliton' with the very broad meaning of solitary wave, disregarding any interacting properties.} limits, respectively.
This reduction eventually yields a true alternative - in nondegenerate cases - about the stability of periodic traveling waves of sufficiently small amplitude or large wavelength.

\bigskip

To be more specific, let us begin with the large wavelength limit. Intuitively, and this is confirmed by our findings, the co-periodic stability of periodic waves of large wavelength is related to the stability of solitary waves under localized perturbations, that is 
stability in the class of functions with fixed endstate. 
Solitary waves are in general parametrized by their endstate (in $\R^N$), phase speed, and phase shift, which make
$(N+2)$-dimensional families. However, when we fix their endstate we are left with two-dimensional families. By analogy with the ODE situation described above this 
hints at the possibility of a true alternative for the corresponding stability problem. This is indeed what is known to happen in most cases.
Under assumptions expected to be generic,
the stability of a solitary wave solution to an Hamiltonian PDE is characterized by means of the \emph{Boussinesq momentum of stability}, which is defined as an \emph{action integral} along the solitary wave orbit - a homoclinic loop in the phase plane - and can be viewed as a function of the speed of the wave only, once its endstate has been fixed. If we denote it by $\Mom(\speed)$,  with $\speed$ being the wave speed, we know that
\begin{itemize}
\item if $\Mom''(\speed)<0$ then the solitary wave is spectrally unstable, as follows from Evans functions calculations, see \cite{PegoWeinstein} for scalar equations, \cite{JMAA10} for Euler--Korteweg systems (also see \cite{SBG-DIE} for a survey and further references);
\item if $\Mom''(\speed)>0$ then the solitary wave is orbitally stable by the general theory of Grillakis, Shatah and Strauss \cite{GrillakisShatahStrauss}, which subsumes earlier results and has been widely used since then to produce numerous specific results.
\end{itemize}
One may argue that this is still a pseudo-alternative - even if we preclude the critical case $\Mom''(\speed)=0$ - in that there is still a gap in between the kinds of stability that are considered. Actually, \cite{GrillakisShatahStrauss} also contains a result that fills this gap for some class of equations, namely by showing that $\Mom''(\speed)<0$ implies \emph{orbital instability}, but it does not apply to our Hamiltonian framework because the involved skew-adjoint operator is not onto. However, this difficulty can also be overcome in some cases, as was done by Bona, Souganidis and Strauss for the Korteweg--de Vries equation \cite{BonaSouganidisStrauss}. For simplicity we may think of the stability problem for solitary waves as a true alternative.

The pseudo-alternative introduced in \cite{BMR} for the co-periodic stability of periodic waves  is a neat analogue of what is known for solitary waves, in that it gives a sufficient condition for spectral instability and a sufficient condition for orbital stability in terms of a single tool, which plays the role of the Boussinesq momentum for periodic waves. 
This tool is still an \emph{action integral} along the wave orbit in the phase plane, but compared with the Boussinesq momentum it depends on more parameters. Actually, for the systems of $N$ PDEs we are considering ($N=1$ or $2$), the periodic waves action $\Action$ depends on $N+2$ parameters (as it is independent of the phase shift parameter). This is just one more parameter than for the Boussinesq momentum of a solitary wave, if we count its speed for one parameter and its endstate for $N$ parameters. However, while the stability of solitary waves is governed by the convexity properties of a function of a single variable $\Mom(\speed)$ - since the endstate is held fixed - the stability of periodic waves depends on the \emph{whole} Hessian $\nabla^2\Action$ of the action
with respect to all parameters - a $(N+2)\times (N+2)$ matrix. 
Our pseudo-alternative is expressed indeed in terms of the negative signature of $\nabla^2\Action$, similarly as what happens for finite-dimensional Hamiltonian systems of ODEs. One of the goals here is to make the connection with the alternative for solitary waves.

A connection was already made by Gardner between the instability of solitary waves and that of periodic waves of long wavelength.
He proved indeed in \cite{Gardner97} that periodic waves close to spectrally unstable solitary waves are also spectrally unstable. 
An outcome of our present work is a similar result 
showing that periodic waves close to solitary waves for which $\Mom''(\speed)<0$ are also spectrally unstable. The latter result 
might be obtained by combining \cite{Gardner97} with \cite{GrillakisShatahStrauss}. However our proof 
differs significantly from \cite{Gardner97} and does not rely on any known results for solitary waves. 
It hinges on the first part of our own pseudo-alternative - the sufficient condition for spectral instability - together with a careful asymptotic expansion of the second derivatives of the action, which yields leading order terms for the matrix $\nabla^2\Action$ having a very peculiar form. 

From the second part of our pseudo-alternative - the sufficient condition for orbital stability - together with our asymptotic expansion we derive the newer, striking upshot that periodic waves close to solitary waves for which $\Mom''(\speed)>0$ are orbitally stable with respect to co-periodic perturbations. Thus in the large wavelength limit this completes the picture and provides a true alternative for nondegenerate cases.

\bigskip

Regarding small amplitude waves, the dimensional reduction is even stronger. Our approach actually leads to their orbitally stability with respect to co-periodic perturbations in \emph{any} nondegenerate case, regardless of the stability of limiting states viewed as steady constant solutions, hereafter merely referred to as constant states.

This may be thought of as coming from two combined facts. The first one is that constant states form an $N$-dimensional family (since any constant function is a steady solution). The second one is more subtle and is linked to an important feature of co-periodic stability, namely that it is associated with several constraints.

Indeed, as is clearly apparent in \cite{BMR} the co-periodic stability of periodic waves is related to evolutions and minimization under constraints, essentially of fixed period and of fixed averages for the components of the solution and the impulse - or momentum, associated with the invariance of equations under spatial translations. The constraint of a fixed endstate in the solitary wave limit, and of a fixed period and fixed averages in the harmonic limit are limiting traces of those\footnote{The constraint on the impulse becomes redundant in both the harmonic and soliton limits.}.

Our stability problem reduces in the small amplitude limit to the stability of  the limiting constant state in the class of functions with period equal to the harmonic period and with the same average values as the constant state under consideration. Those extra constraints reduce to zero the dimension of the family of relevant constant solutions.

The latter observation is indeed crucial. By contrast, constant states 
may not be spectrally stable under more general perturbations, like arbitrary localized ones or periodic ones that are neither of zero mean nor of harmonic period.
This does not happen in the scalar case $N=1$ but does happen when $N=2$. More details are given in Appendix~\ref{app:harmonic}, in which we also point out a few facts about the \emph{spectral stability} of constant states.
These are indeed spectrally stable under arbitrary perturbations in the scalar case ($N=1$), or for $N=2$ when they are endstates of solitary waves. However, in the case $N=2$ states generating a family of small-amplitude periodic waves need to satisfy an extra condition for begin spectrally stable. This condition happens to be the \emph{hyperbolicity} of the underlying dispersionless system.
Fortunately this is irrelevant for our purpose because those constant states are always spectrally stable under relevant perturbations. 

A direct computation of the relevant constrained infinite-dimensional Hessian shows that its negative signature is zero and that its kernel is $2$-dimensional, consistently with the extra two dimensions of the family of periodic waves. This hints at the orbital stability of small amplitude periodic waves proved below. 

As in the soliton limit, those pieces of information are given here just to motivate what follows, and provide \emph{a posteriori} enlightenment. We shall actually not use any of those.

\bigskip

Besides the divergence of final outcomes, there is also a notable discrepancy in the nature of expansions obtained in the harmonic and soliton limits. The former is regular and the leading-order term in the expansion of $\nabla^2\Action$ is sufficient to obtain all required pieces of information. By contrast, the latter is highly singular and a three-steps expansion of $\nabla^2\Action$ is needed to reach the stage where the expansion involves bounded terms and - more importantly -  conveys enough information to conclude. However once this multi-scale expansion has been obtained, the separation of scales helps in the actual computation of the negative signature of $\nabla^2\Action$ in the large wavelength limit. On the contrary, even though $N$ is small, the computation of the signature is slightly tricky in the small amplitude regime. We think that it would be hardly doable without gaining first some insight - not provided by the expansion - on the geometric nature of the problem at hand. In this respect, important orthogonality properties of vectors involved in the expansions of $\nabla^2\Action$ are gathered in Subsection~\ref{s:ortho}. We actually also use the related geometric upshots in the soliton limit in order to spare some computational effort and point out explicitly the single entry that is really important in the last matrix-valued coefficient of the three-steps expansion. Remarkably enough it is in the computation of this entry at the very last stage of the expansion that appears the decisive $\Mom''(\speed)$.

\bigskip

The present paper is organized as follows. In Section \ref{s:framework} we introduce the general framework, recall  from \cite{BMR} our pseudo-alternative, and state our main, present results. Section \ref{s:expansion} is devoted to the derivation of asymptotic expansions for $\nabla^2\Action$ in the small amplitude and in the soliton limits. In Section \ref{s:conclusions} we draw consequences from these expansions and prove our stability/instability results.

In a companion paper \cite{BMR2-II}, we consider the modulated equations associated with our class of Hamiltonian PDEs, and use the asymptotic expansions derived here to gain insight on modulated equations in the small amplitude and in the soliton limits, which are crucial to the understanding of dispersive shocks. 
In further work, including the forthcoming \cite{BS} focused on scalar equations, we plan to also address the joint limit and the existence of small amplitude dispersive shocks for Hamiltonian PDEs.

\section{Framework and main results}\label{s:framework}

\subsection{A class of Hamiltonian PDEs}\label{ss:framework}
As in our earlier work \cite{BMR}, we consider abstract systems of the form
\begin{equation}
\label{eq:absHamb}
\partial_t\bU = \partial_x (\bB\,\Euler \Ham[\bU])\,.\end{equation}
where the unknown $\bU$ takes values in $\R^N$, $\bB$ is a \emph{symmetric} and \emph{nonsingular} matrix, and $\Euler \Ham$ denotes the variational gradient of $\Ham=\Ham(\bU,\bU_x)$~-~the letter $\Euler$ standing for the \emph{Euler} operator
 (see explicit expression below). 
In practice, we are most concerned with the case $N=1$, which includes quasilinear, generalized versions of the {Korteweg}-{de} {Vries} equation
$$\qKdV\ \quad \partial_tv=\partial_x(\Euler \en[v])\,,\quad \en=\en(v,v_x)\,,$$
and the case $N=2$, which includes
the Euler--Korteweg system, either in Eulerian coordinates, 
\begin{equation*}\label{eq:EKabs1d}
\EKE\ \quad \left\{\begin{array}{l}\partial_t\varrho +\partial_x (\varrho \vits)\,=\,0\,,\\ [5pt]
\partial_t \vits + \vits\partial_x\vits \,+\,\partial_x(\Euler \En [\varrho])\,=\,0\,,
\end{array}\right.\qquad\qquad \En=\En(\varrho,\varrho_x)\,,
\end{equation*}
or in mass Lagrangian coordinates, 
\begin{equation*}\label{eq:EKabsLag}
\EKL\ \quad \left\{\begin{array}{l}\partial_t{\vol} \,=\,\partial_x{\vits}\,,\\ [5pt]
\partial_t {\vits} \,=\, \partial_x( \Euler \en [\vol])\,,
\end{array}\right.\qquad\qquad \en=\en(\vol,\vol_x)\,.
\end{equation*}
Readers interested in the underlying physics should keep in mind that the significance of the independent variables $(x,t)$ is not the same in \EKE\  and in \EKL\ - in \EKE , $x$ denotes a spatial coordinate (homogeneous to a length), whereas in \EKL , $x$  denotes a mass Lagrangian coordinate (homogeneous to a mass per squared length). The systems \EKE\  and \EKL\ are equivalent models - at least regarding their smooth solutions - for the motion of a compressible fluid of energy density 
$\En$ and specific energy $\en$, as long as its density $\varrho$ and specific volume $\vol=1/\varrho$ remain bounded away from zero. However, this equivalence is out of purpose here. We just consider \EKE\  and \EKL\  as two instances of our abstract framework \eqref{eq:absHamb}. 
We stress that it is important to keep the form of nonlinearities as general as possible to cover both cases arising directly from fluid mechanics considerations alluded to above and hydrodynamic formulations of other classical models, such as nonlinear Schr\"odinger equations or vortex filaments models. See \cite{SBG-DIE} for a detailed discussion. 

We might as well consider equations of the form \eqref{eq:absHamb} where $\bB\partial_x$ is replaced by a more complicated skew-adjoint pseudodifferential operator. This occurs for instance in the Benjamin--Bona--Mahony (BBM) equation, involving the nonlocal operator $(1-\partial_x^2)^{-1}$ instead of just the matrix $\bB$. This type of equation will be addressed elsewhere \cite{BS}.

Regarding $\bB$ and $\Ham=\Ham(\bU,\bU_x)$, we will make a few assumptions that are met by our main examples \qKdV, \EKE\  and \EKL , and ensure that periodic traveling waves arise as families parametrized by their speed, 
$(N+1)$ constants of integration (and a spatial phase shift that is immaterial to the present discussion and that hence will be omitted from now on). 
However, since our analysis is local in nature, we expect that assumptions on $\Ham$ - in particular quadraticity assumptions - could also be further relaxed to reach the generality considered in \cite{BNR-JNLS14}.

The properties of \qKdV, \EKE\  and \EKL\  regarding periodic traveling waves have been investigated and compared in detail in \cite{BMR}.
The scalar equation \qKdV\  is obviously of the form \eqref{eq:absHamb}, with
$$\bU=v\,,\;\bB=1\,,\;\Ham=\en(\bv,\bv_x)\,.$$ 
Both \EKE\  and \EKL\  fall into the special situation in which
\begin{equation}\label{eq:Ham}\Ham=\Ham[\bU]=\Ham(\bU,\bU_x)=\Ec(\bv,\bu)+\en(\bv,\bv_x)\,,\;\mbox{with\,}\;\bU=\left(\begin{array}{c} \bv \\ \bu\end{array}\right)\,,
\end{equation}
with $\Ec$ quadratic in $\bu$ - which is reminiscent of a kinetic energy - and
\begin{equation}\label{eq:B}\bB=\,\left(\begin{array}{c|c}0& b \\ \hline b &0\end{array}\right)\,,\;b\neq 0\,.\end{equation}
More precisely, for  \EKL\  we have $\Ec=\frac12 u^2$ and $b=1$,
and substituting the notations $\bv$ and $\en$ for $\varrho$ and 
$\En$ in \EKE\ we see that this system is of the form \eqref{eq:absHamb}-\eqref{eq:Ham}-\eqref{eq:B} with $b=-1$
and $\Ec=\frac12 \bv u^2$ - which would read $\frac12 \varrho u^2$ in a more standard notation for fluids.

From now on, we consider an abstract system \eqref{eq:absHamb} under the following restrictions.
\begin{assumption}
\label{as:ham}
Either $N=1$, $\bU=v$, $\Ham=\en(\bv,\bv_x)$, and $\bB=b$ is a nonzero real number, 
or $N=2$ with the Hamiltonian $\Ham$ being decomposed as in \eqref{eq:Ham}, 
and the symmetric matrix $\bB$ being as in \eqref{eq:B}. 

In both cases $\en$ is assumed to be quadratic in $\bv_x$ with $\kappa:=\partial_{\bv_x}^2 \en$ taking only positive values, while in the case $N=2$ the other part of the Hamiltonian, $\Ec$ is assumed to be quadratic in $\bu$, with $\tau:=\partial_{\bu}^2 \Ec$ taking only positive values as well.
\end{assumption}

This framework is designed to include  \qKdV, \EKE\  and \EKL\ at the same time.

To some extent the case $N=1$ can be viewed as the sub-case obtained by merely dropping the component $u$ and the kinetic energy $\Ec$ in \eqref{eq:absHamb}-\eqref{eq:Ham}-\eqref{eq:B}.
Whenever this point of view is indeed legitimate, to avoid repetition, computations will be carried out only in the case $N=2$. 
However when we come to conclusions regarding stability we will need to distinguish between the two cases.  

Here above and throughout the paper except in Section \ref{ss:sollim}, square brackets $[\cdot]$ signal a function of not only the dependent variable $\bU$ but also of its derivatives $\bU_x$, $\bU_{xx}$, etc.

By definition we have
$$\Euler \Ham[\bU]= \left(\begin{array}{c} \partial_\bv \Ec(\bv,\bu) +  \Euler \en[\vol] \\  \partial_\bu \Ec(\bv,\bu) \end{array}\right)\,,\qquad\qquad
\Euler \en[\bv] := \partial_\bv \en (\bv,\bv_x)\,-\,\partial_x(\partial_{\bv_x} \en (\bv,\bv_x))\,.$$
In addition, we introduce the \emph{impulse}
$$\Impulse(\bU)=\frac{1}{2} \bU\cdot \bB^{-1} \bU\,,$$
which is linked to the invariance of \eqref{eq:absHamb} with respect to $x$-translations in that
\begin{equation}\label{eq:translx}
\partial_x \bU = \partial_x (\bB\,\Euler \Impulse[\bU]) \,.
\end{equation}
Another connection with space translation invariance lies in the fact that the impulse satisfies a local conservation law, which plays a crucial role in the stability analysis developed in \cite{BMR,BNR-JNLS14}. 
This conservation law will not be used  explicitly in the present paper though. 
By definition, the impulse is just the well-known conserved quantity $\frac12 \bv^2$ for \qKdV, while $\Impulse=-\rho\vits$ is also well-known for \EKE, and $\Impulse= \vol\vits$ is slightly less usual for \EKL.

\subsection{Wave parameters and action integral}\label{ss:waveparam}

For a traveling wave $\bU=\ubU(x-\speed t)$ of speed $\speed$ to be solution to \eqref{eq:absHamb}, one must have  by \eqref{eq:translx}  that
$$\partial_x(\Euler (\Ham+\speed\Impulse)[\ubU])=0\,,$$
or equivalently, there must exist $\blambda\in \R^N$ such that
\begin{equation}
\label{eq:EL}
\Euler (\Ham+\speed\Impulse)[\ubU] \,+\,\blambda\,=\,0\,.
\end{equation} 
Equation \eqref{eq:EL} is the \emph{Euler--Lagrange equation} $\Euler\Lag[\ubU]=0$ associated with the \emph{Lagrangian}
$$\Lag= \Lag(\bU,\bU_x;\speed,\blambda):= \Ham(\bU,\bU_x)+\speed\Impulse(\bU)\,+\,\blambda\cdot \bU\,.$$
We thus see that  $\ubU_x$ is an integrating factor, and $$\Legendre\Lag[\ubU]:= \ubU_x\cdot \nabla_{\bU_x} \Lag(\ubU,\ubU_x) -\Lag(\ubU,\ubU_x)$$ is  a first integral of the profile ODEs in \eqref{eq:EL}
(The letter $\Legendre$ here above stands for a formal `\emph{Legendre} transform'.) 
Therefore, the possible profiles $\ubU$ are determined by the equations in \eqref{eq:EL} together with
\begin{equation}
\label{eq:ELham}
\Legendre\Lag[\ubU]\,=\,\mu\,,
\end{equation}
where $\mu$ is a constant of integration.  The scalar equation \eqref{eq:ELham} 
specifies the value of the above mentionned first integral for the system \eqref{eq:EL}. `Conversely', by differentiation of  \eqref{eq:ELham}, we receive $\ubU_x\cdot \Euler\Lag[\ubU]=0$. In the scalar case ($N=1$), this implies that all non constants solutions to \eqref{eq:ELham} automatically solve \eqref{eq:EL}. However, this is not true for $N=2$, and 
we consider both \eqref{eq:EL}  and \eqref{eq:ELham} as profile equations, despite some redundancy.

A traveling profile $\ubU$ is thus `generically' parametrized by $(\mu, \blambda, \speed)\in \R^{N+2}$. This is what happens for periodic profiles under Assumption \ref{as:ham} and a few other assumptions regarding the dependence of $\Ham$ on $\bv$.
See \cite{BMR,BNR-GDR-AEDP} for a more detailed discussion on the profile equations. Given such a periodic profile $\ubU$ of period $\Xi$, we define as in \cite{BMR,BNR-GDR-AEDP} the abbreviated action integral\footnote{This action integral has often been denoted by $K$ in the related KdV literature \cite{BronskiJohnson,BronskiJohnsonKapitula,Johnson}. We have refrained from doing so when dealing with problems in which the letter $K$ is usually reserved for capillarity - or sometimes a wave number.} 
\begin{equation}\label{eq:defTheta}
\Action(\mu, \blambda, \speed):= \int_{0}^{\Xi} 
(\Ham[\ubU]+\speed \Impulse(\ubU) + \blambda\cdot \ubU +\mu)\,\dif \xi\,,
\end{equation}
which because of  \eqref{eq:Ham}\eqref{eq:ELham} equivalently reads
\begin{equation}\label{eq:Theta}\Action(\mu, \blambda, \speed)\,=\,
\int_{0}^{\Xi}  \ubv_x \partial_{\bv_x} \en(\ubv,\ubv_x) \,\dif x =
\oint\partial_{\bv_x} \en(\bv,\bv_x)\,\dif \bv\,,
\end{equation}
where the symbol $\oint$ stands for the integral along the orbit described by $(\ubv,\ubv_x)$ in the $(\bv,\bv_x)$-plane, referred to as the phase plane. This quantity $\Action$ is known to play a crucial role in the stability analysis of periodic traveling waves, see \cite{BMR} and references therein. It also yields a closed form of the modulated equations associated with 
\eqref{eq:absHamb}, see \cite{BNR-GDR-AEDP,BMR2-II} and references therein.

For waves of small amplitude about a fixed point $\bv_0$, we have $(\ubv,\ubv_x)\to (\bv_0,0)$ when their amplitude goes to zero, while their period has a finite limit $\Xi_0$, the period of harmonic waves. This implies in particular that  the action integral $\Action$ goes to zero in the small amplitude limit. 

For a family of waves of large period approaching a loop homoclinic to a saddle-point $\bv_s$ in the phase plane, we see that
the action integral 
$$\int_{-\Xi/2}^{\Xi/2}  \Ham[\ubU]+\speed \Impulse(\ubU) + \blambda\cdot \ubU +\mu)\,\dif \xi\,=\,\int_{-\Xi/2}^{\Xi/2}  \ubv_x \partial_{\bv_x} \en(\ubv,\ubv_x) \,\dif x =
\oint\partial_{\bv_x} \en(\bv,\bv_x)\,\dif \bv
$$
tends to
$$\int_{-\infty}^{+\infty} (\Ham[\ubU^s]+\speed \Impulse(\ubU^s) + \blambda_s \cdot \ubU^s +\mu_s)\,\dif \xi\,=:\,\Mom(\speed,\bU_s)\,,$$
the Boussinesq moment of instability associated with the solitary wave 
of speed $\speed$ and endstate $\bU_s=(\bv_s,\bu_s)$, with 
$$\blambda_s := - \nabla_{\bU}(\Ham+\speed\Impulse)(\bU_s,0)\,,\quad 
\mu_s := - \blambda\cdot \bU_s\,-\,(\Ham+\speed\Impulse)(\bU_s,0)\,,$$
and  $\ubU^s$ denoting the profile of the corresponding solitary wave.

In general, a crucial observation is that the first-order derivatives of the action are merely integrals along the profile $\ubU$ obtained as though neither $\ubU$ nor the period would depend on the parameters $(\mu,\blambda,\speed)$. More precisely, as pointed out in \cite[Prop.~1]{BNR-GDR-AEDP} we have
\begin{equation}\label{eq:derTheta}
\partial_\mu\Action = \Xi= \int_{0}^{\Xi} \,\dif \xi\,,\qquad
\nabla_{\blambda} \Action =  \int_{0}^{\Xi}\ubU \,\dif \xi\,, \qquad
\partial_\speed\Action = \int_{0}^{\Xi} \Impulse(\ubU)\,\dif \xi\,.
\end{equation}

Let us now recall from \cite{BMR} the following stability/instability criteria, expressed in terms of the $(N+2)\times (N+2)$, Hessian matrix $\Hess\Action$ of $\Action$ as a function of $(\mu,\blambda,\speed)$ and its \emph{negative signature} $\negsign(\Hess\Action)$,
that is, the number of negative directions for the associated quadratic form in $\R^{N+2}$.

\begin{theorem}\label{thm:stabcrit} 
In the framework described in \S\ref{ss:framework}, in particular under Assumption \ref{as:ham}, for a wave locally parametrized by $(\mu,\blambda,\speed)$ such that $\partial_\mu^2\Action$ is nonzero and $\Hess\Action$ is nonsingular,
\begin{itemize}
\item if $\negsign(\Hess\Action)-N$ is zero then the wave is conditionally orbitally stable with respect to co-periodic perturbations;
\item if $\negsign(\Hess\Action)-N$ is odd then the wave is spectrally unstable.
\end{itemize}
\end{theorem}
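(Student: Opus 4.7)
The plan is to follow a Grillakis--Shatah--Strauss type Lyapunov argument for the orbital stability half, and an Evans-function parity argument in the spirit of the finite-dimensional discussion of the introduction for the spectral instability half. The linchpin in both cases is a dimensional reduction identifying $\Hess\Action$ with a reduced second variation living on the $(N+2)$-dimensional space of wave parameters.

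First, I would view the profile $\ubU$ as a critical point, under the constraints of prescribed period $\Xi$, prescribed averages $\langle\bU\rangle$ and prescribed average impulse $\langle\Impulse(\bU)\rangle$, of the integrated Hamiltonian $\int_0^\Xi \Ham(\bU,\bU_x)\,\dif\xi$. The associated Lagrange multipliers are precisely $(\mu,\blambda,\speed)$, and the resulting Euler--Lagrange system is \eqref{eq:EL}, with \eqref{eq:ELham} encoding the Hamiltonian first integral that fixes the period. The constrained second variation at $\ubU$ is a self-adjoint Sturm--Liouville type operator $\Hamav$ on $L^2(\R/\Xi\Z;\R^N)$ (thanks to Assumption~\ref{as:ham}), with discrete spectrum, finitely many negative eigenvalues, and with $\ubU_x$ in its kernel by spatial translation invariance.

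The crux is then a dimensional reduction connecting the Morse index of $\Hamav$ on the constrained tangent space to $\negsign(\Hess\Action)$. Differentiating the profile equations \eqref{eq:EL}--\eqref{eq:ELham} with respect to $q=(\mu,\blambda,\speed)$ provides $N+2$ explicit vector fields $\partial_q\ubU$ along the family; pairing $\Hamav$ with these and using the clean identities \eqref{eq:derTheta} (which express first derivatives of $\Action$ as integrals along the \emph{frozen} profile) one obtains an algebraic relation of the schematic form
\begin{equation*}
\bigl\langle \Hamav\,\partial_{q_i}\ubU,\partial_{q_j}\ubU\bigr\rangle_{L^2} \,=\, -(\Hess\Action)_{ij}.
\end{equation*}
A standard Vakhitov--Kolokolov style index count, adapted to the $(N+2)$-parameter family, then yields
\begin{equation*}
\negsign\bigl(\Hamav|_{\text{constraint tangent space}}\bigr) \,=\, \negsign(\Hess\Action) - N,
\end{equation*}
the shift by $N$ tracking the signature of the restriction of $\Hamav$ to the $N$ ``momentum-type'' directions forced by the block structure of $\bB$ in \eqref{eq:B} (or, for $N=1$, by the scalar $b$).

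To close the two assertions, when $\negsign(\Hess\Action)-N=0$ the constrained Hessian is non-negative with kernel reduced to translations, and a classical coercivity/Lyapunov argument---using conservation of $\Ham$, $\Impulse$ and $\langle\bU\rangle$ along the co-periodic flow to confine perturbations to a slice transverse to the spatial orbit---yields conditional orbital stability modulo phase shift. For the spectral instability half I would mimic the finite-dimensional $J\nabla^2 H$ mean-value argument from the introduction: construct an Evans-type determinant $\Evans(\ev)$ for the co-periodic linearization $\bB\partial_x\Hamav$, observe that $\Evans(0)$ reduces, via the parameter-variation vectors populating the generalized kernel, to a signed multiple of $\det(\Hess\Action)$, while the sign of $\Evans(\ev)$ as $\ev\to+\infty$ is fixed by the leading symbol; when $\negsign(\Hess\Action)-N$ is odd the two signs disagree and the intermediate value theorem produces a real positive eigenvalue. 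The main obstacle is the reduction identity itself: one must check that the fields $\partial_q\ubU$ span a complement to $\ubU_x$ on which the bilinear form reproduces $-\Hess\Action$, and keep careful sign/kernel bookkeeping in both cases $N=1$ and $N=2$, since the matrix structure of $\bB$ and the kinetic coupling $\Ec$ shift the raw Morse index of $\Hamav$ but leave the reduced criterion intact.
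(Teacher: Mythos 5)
First, a point of provenance: the present paper does not prove Theorem~\ref{thm:stabcrit} at all — it is recalled from \cite{BMR} and used here as a black box. Judged against what the paper says about that reference (co-periodic stability via minimization under constraints of fixed period and fixed averages of the solution and the impulse, plus a parity-type spectral argument analogous to the finite-dimensional $\det(J\nabla^2H)$ discussion of the introduction), your sketch follows essentially the same architecture: a constrained variational characterization of the profile, an index-count reduction identifying the relevant Morse index with $\negsign(\Hess\Action)-N$, and an Evans-function intermediate-value argument for the odd-parity instability statement. So the route is the right one; the issue is that the linchpin step is asserted rather than established.

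The genuine gap is the reduction identity $\langle \Hamav\,\partial_{q_i}\ubU,\partial_{q_j}\ubU\rangle=-(\Hess\Action)_{ij}$ together with the count $\negsign(\Hamav|_{\rm constraints})=\negsign(\Hess\Action)-N$. As written it cannot hold verbatim: the period $\Xi=\partial_\mu\Action$ varies with $(\mu,\blambda,\speed)$, so the parameter derivatives $\partial_q\ubU$ are not $\Xi$-periodic (they are periodic only modulo a multiple of $\ubU_x$ proportional to $\partial_q\Xi$), and differentiating \eqref{eq:derTheta} produces period-variation terms such as $\partial_{q_j}\Xi\,\ubU_i(\Xi)$ that your schematic computation drops; in particular the $\mu$-direction, which changes only the period, cannot be paired against $\Hamav$ on the fixed-period space at all. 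This is exactly where the standing hypothesis $\partial_\mu^2\Action\neq0$ must enter — it lets one eliminate $\mu$ at fixed period and pass, by a bordered-matrix/Sylvester (Schur-complement) argument, from the $(N+1)$ constraint pairings to the full $(N+2)\times(N+2)$ Hessian — yet your proposal never invokes this hypothesis, and the attribution of the shift by $N$ to ``momentum-type directions forced by the block structure of $\bB$'' is a heuristic, not a derivation. The same bookkeeping is missing on the instability side: one must actually verify that the co-periodic Evans determinant has a definite sign as $\ev\to+\infty$ and that its behavior at $\ev=0$ is proportional to $\det\Hess\Action$ with a sign factor carrying the extra $(-1)^N$; without that, the intermediate-value argument does not yet yield the stated criterion on $\negsign(\Hess\Action)-N$ being odd.
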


The words `\emph{conditionally} orbitally stable' in the first statement mean that a solution to \eqref{eq:absHamb} stays - \emph{as long as it exists} - as close as we want in the energy space $H^1(\R)\times L^2(\R)^N$ to some translate of the underlying wave provided it starts close enough. In general, this does not encompass global existence of the solution, since local existence for this type of quasilinear PDE is usually known only in smoother functional spaces.

Spectral instability means that the linearized equations have exponentially growing solutions. 
We recall that here this may be realized with both co-periodic solutions and with smooth localized solutions.

Applying the stability/instability criteria in Theorem~\ref{thm:stabcrit}  to either small amplitude waves or long wavelength waves, and using the asymptotic expansions for $\Hess\Action$ derived in the next section, we can prove the following.
For the sake of conciseness, we give only rough statements here. 
Further details on nondegeneracy assumptions are provided in Section~\ref{s:conclusions}. For more on the nature of conclusions we refer to \cite{BMR}.

\begin{theorem}\label{thm:stabsmall} 
`Nondegenerate' periodic waves of sufficiently small amplitude approaching a constant steady state are such that
\begin{itemize}
\item Their period $\Xi$ is monotonically depending on $\mu$, that is, $\partial_\mu\Xi=\partial_\mu^2\Action\neq0$.
\item The matrix $\Hess\Action$ is nonsingular and
\begin{itemize}
\item $\det\Hess\Action\,<0$ in the case $N=1$;
\item $\det\Hess\Action\,>0$ in the case $N=2$.
\end{itemize}
\item In any case, these waves are conditionally orbitally stable with respect to co-periodic perturbations.
\end{itemize}
\end{theorem}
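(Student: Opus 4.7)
The plan is to exploit the small-amplitude asymptotic expansion of $\Hess\Action$ derived in Section \ref{s:expansion}. Since the orbit in the phase plane collapses to a point as the amplitude $a\to 0$ and the action integral $\Action$ itself tends to zero, the entire Hessian tends to zero; what matters is its leading contribution. I expect that expanding the profile equations around the rest state $(\bv_0,\bu_0)$ in powers of $a$, Poincar\'e--Lindstedt style, and plugging the result into the formulas \eqref{eq:derTheta} (after one more differentiation) produces an expansion
\[
\Hess\Action \;=\; a^{2}\,M_{0}\;+\;\mathcal{O}(a^{3}),
\]
with $M_{0}$ a constant $(N+2)\times(N+2)$ matrix whose entries are built from the quadratic part of $\Ham+\speed\Impulse$ at $\bU_0$, the harmonic period $\Xi_{0}$, and the positive coefficients $\kappa$, $\tau$ of Assumption \ref{as:ham}.

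For the monotonicity claim $\partial_\mu\Xi=\partial_\mu^{2}\Action\ne 0$, I would read off the $(\mu,\mu)$ entry of $M_{0}$. Since $\mu$ controls the ``energy'' of the nonlinear oscillator \eqref{eq:ELham}, the amplitude $a$ depends smoothly on $\mu$ with $a^{2}\sim C(\mu-\mu_{0})$, so the nondegenerate leading behaviour of $\Action$ is $\Action\sim C'\,(\mu-\mu_{0})^{2}$ with $C'\ne 0$ provided the anharmonic frequency shift does not vanish accidentally; this is precisely the nondegeneracy condition to be formulated. The nondegeneracy excludes only the codimension-one set where the harmonic approximation is exceptionally accurate.

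For the signature, the main subtlety—as emphasised after the statement—is that a brute-force approach is unwieldy even for $N\in\{1,2\}$. My plan is to use the orthogonality identities of Subsection \ref{s:ortho} to exhibit a basis in which $M_{0}$ is block-diagonal. The expected structure is: an $N$-dimensional block carrying the directions of ``constant state perturbations'' (variation of the endstate $\bU_{0}$ at fixed period and zero amplitude) on which $M_{0}$ is \emph{positive definite}, thanks to the positivity of $\kappa$ and $\tau$; a two-dimensional transverse block corresponding to ``amplitude versus period/phase-velocity'' whose off-diagonal coupling produces one positive and one negative eigenvalue, of opposite signs by a Lagrange-type determinant identity. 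This would yield $\negsign(M_{0})=1$ unconditionally, hence by continuity and the nondegeneracy assumption $\negsign(\Hess\Action)=1$ when $N=1$ and $\negsign(\Hess\Action)=2$ when $N=2$—in both cases matching the constant state's $N$-dimensional kernel of translations once amplitude is switched on. The harder case is $N=2$ because the $N$-dimensional positive block couples nontrivially with the $\bu$-component through the matrix $\bB$, so I expect to use the quadratic structure of $\Ec$ in $\bu$ together with an explicit Schur-complement reduction.

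Once the signature is in hand, the determinant sign follows: an $(N+2)\times(N+2)$ symmetric nonsingular matrix of negative signature $N$ has $\det$ of sign $(-1)^{N}$, giving $\det\Hess\Action<0$ for $N=1$ and $\det\Hess\Action>0$ for $N=2$, as announced. Finally, since $\negsign(\Hess\Action)-N=0$ and $\Hess\Action$ is nonsingular and $\partial_{\mu}^{2}\Action\ne 0$, Theorem \ref{thm:stabcrit} immediately yields conditional orbital stability with respect to co-periodic perturbations. The principal obstacle I foresee is organising the expansion so that the geometric block decomposition of $M_{0}$ is manifest; without the orthogonality lemmas of Subsection \ref{s:ortho} the signature computation in the case $N=2$ would require tracking cancellations between genuinely different orders of the expansion.
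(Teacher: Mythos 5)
There is a genuine gap, and it is at the very foundation of your plan: the scaling ansatz $\Hess\Action = a^{2}M_{0}+\mathcal{O}(a^{3})$ is false. In the small amplitude limit the action behaves as $\Action=\Xi_{0}\,(\mu-\mu_{0})+\mathcal{O}(\mu-\mu_{0})^{2}$ (Proposition~\ref{prop:asactionperiod}), i.e.\ it vanishes only to \emph{first} order in $\mu-\mu_{0}$, its gradient has a nonzero limit ($\partial_\mu\Action=\Xi\to\Xi_{0}>0$), and its Hessian converges to a finite, generically \emph{nonsingular} matrix: by Theorem~\ref{thm:asharmhess}, $\nabla^{2}\Action=\pi\Y^{0}\bigl(\alpha_{0}\bV_0\otimes\bV_0+\beta_0(\bV_0\otimes\bW_0+\bW_0\otimes\bV_0)+\tfrac{\frak{a}_0^2}{2}\bW_0\otimes\bW_0-\bT_0\otimes\bT_0+\tfrac{\frak{a}_0^2}{4}(\bV_0\otimes\bZ_0+\bZ_0\otimes\bV_0)\bigr)+\mathcal{O}(\delta)$. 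Your claim $\Action\sim C'(\mu-\mu_{0})^{2}$ (equivalently that the whole Hessian shrinks like the squared amplitude) would make the monotonicity statement about $\partial_\mu\Xi$ and the signature computation rest on a leading term that does not exist; the correct nondegeneracy condition is $\alpha_{0}\neq 0$ (an explicit combination of $\frak{a}_0,\frak{b}_0,\frak{c}_0$ and derivatives of $\kappa$, see \eqref{eq:alpha-formula}), together with $g_\bv^{0}\neq 0$ when $N=2$, a condition you do not identify.

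The signature mechanism you propose is also internally inconsistent. A decomposition into an $N$-dimensional positive definite block plus a $2\times 2$ indefinite block would force $\negsign=1$ for \emph{every} $N$, which contradicts both your own stated conclusion ($\negsign=2$ for $N=2$) and what is actually needed to invoke Theorem~\ref{thm:stabcrit}, namely $\negsign(\Hess\Action)=N$. The paper's computation (Corollary~\ref{cor:stabharm}) does use the orthogonality relations of Subsection~\ref{s:ortho}, but in a different way: one conjugates by $\Pb_0=(\bE\;\;\Sb\bV_0\;\;\Sb\bT_0\;\;\Sb\bW_0)$, and the signature emerges from Sylvester's rule applied to the resulting matrix $\Sigma_0$, whose crucial feature is the \emph{zero} diagonal entry in the $\Sb\bV_0$ direction (since $\bV_0\cdot\Sb\bV_0=0$) coupled off-diagonally to $\bE$, producing a hyperbolic pairing rather than a positive block; the sign sequences of principal minors are $(+,\mathrm{sign}\,\alpha_0,-,-)$ for $N=1$ and $(+,\mathrm{sign}\,\alpha_0,-,-,+)$ for $N=2$, giving $\negsign=N$ and the stated determinant signs independently of the sign of $\alpha_0$. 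Your final two steps (determinant sign from $\negsign=N$, and orbital stability from Theorem~\ref{thm:stabcrit}) are correct as stated, but they only become available once the $\mathcal{O}(1)$ limit of the Hessian and its signature are established along the lines above.
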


Note that the nature of the monotonicity of the period $\Xi$ with respect to $\mu$ does depend on the limiting constant state but that the result covers both possible cases.

\begin{theorem}\label{thm:stabsol} 
`Nondegenerate' periodic waves of sufficiently large period approaching a solitary wave are such that
\begin{itemize}
\item Their period $\Xi$ is monotonically increasing with $\mu$, that is, $\partial_\mu\Xi=\partial_\mu^2\Action>0$.
\item If for the limiting solitary wave the second derivative $\Mom''(\speed)$ of the Boussinesq momentum with respect to velocity at fixed endstate 
is nonzero, then the matrix $\Hess\Action$ is nonsingular for nearby periodic orbits 
\begin{itemize}
\item with  $\Mom''(\speed)\,\det\Hess\Action\,<0$ in the case $N=1$;
\item and $\Mom''(\speed)\,\det\Hess\Action\,>0$ in the case $N=2$.
\end{itemize}
\item If in addition $\Mom''(\speed)>0$ then 
the negative signature of  $\Hess\Action$ is equal to $N$ for nearby periodic orbits, and we have conditional orbital stability with respect to co-periodic perturbations of the corresponding waves.
\item If on the contrary $\Mom''(\speed)<0$ then
the negative signature of  $\Hess\Action$ is equal to $N+1$ and the corresponding wave is spectrally unstable.
\end{itemize}
\end{theorem}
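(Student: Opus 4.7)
The strategy is to feed the soliton-limit asymptotic expansion of $\Hess\Action$ developed in Section~\ref{s:expansion} into the pseudo-alternative of Theorem~\ref{thm:stabcrit}. Its defining feature is multi-scale: as the orbit of the periodic wave approaches the homoclinic loop attached to a saddle point $\bU_s$ of the profile ODE, the period $\Xi$ diverges logarithmically in $\mu_s-\mu$, and each second derivative of $\Action$ inherits a divergent piece coming from the macroscopic contribution $[\Ham(\bU_s)+\speed\Impulse(\bU_s)+\blambda\cdot\bU_s+\mu]\,\Xi$ to $\Action$ -- a bracket that vanishes at the saddle parameters but not in a full neighborhood. Only after three successive subtractions of divergent strata does a bounded residual appear, whose one distinguished entry is controlled by $\Mom''(\speed)$.

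First I would extract the monotonicity $\partial_\mu^2\Action=\partial_\mu\Xi>0$ directly from the logarithmic blow-up of $\Xi$ with $\mu$. Next I would write $\Hess\Action$ as a $1+(N+1)$ block matrix with the diverging scalar $\partial_\mu^2\Action$ in the top-left corner, and perform the corresponding Schur complement; the orthogonality relations of Subsection~\ref{s:ortho} are decisive here, as they align the cross terms $\partial_\mu\partial_{(\blambda,\speed)}\Action$ with the divergent columns of the lower block, so that the first two divergent strata of the expansion cancel exactly in the reduction and one is left with a bounded $(N+1)\times(N+1)$ Schur complement $\widetilde M$. A further block decomposition of $\widetilde M$ separates an $N\times N$ endstate block, whose signature is pinned by Assumption~\ref{as:ham} together with the form of $\bB$ when $N=2$, from a scalar entry in the $\speed$ direction which, at the third and last stratum of the expansion, turns out to equal $\Mom''(\speed)$ up to an explicit positive factor.

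Collecting signs and counting dimensions then yields the announced values of $\det\Hess\Action$ in the two cases $N=1$ and $N=2$, together with $\negsign(\Hess\Action)=N$ when $\Mom''(\speed)>0$ and $\negsign(\Hess\Action)=N+1$ when $\Mom''(\speed)<0$. Theorem~\ref{thm:stabcrit} closes the argument: $\negsign-N=0$ delivers conditional orbital stability with respect to co-periodic perturbations, while $\negsign-N=1$, being odd, delivers spectral instability. The main obstacle is the third step: one must check that after the orthogonality-driven cancellation of the logarithmic strata, the entry carrying $\Mom''(\speed)$ genuinely survives as the leading finite contribution in the $\speed$ sector and is not masked by intermediate corrections. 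This is where the identification of $\Mom(\speed,\bU_s)$ as the $\Xi\to\infty$ remainder of the action at the saddle parameters must be exploited carefully to justify differentiating twice in $\speed$ and reading off $\Mom''(\speed)$ at leading order.
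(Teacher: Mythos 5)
Your overall strategy (feed the soliton expansion of $\Hess\Action$ and the orthogonality relations of Subsection~\ref{s:ortho} into Theorem~\ref{thm:stabcrit} via a signature count) is the paper's, but the concrete reduction you propose breaks at its central step. If you Schur-complement the divergent $\mu\mu$ entry, only the rank-one $\rho^{-2}$ stratum and the part of the $\ln\rho$ stratum aligned with $\bV_s$ (namely the $\bV_s\otimes\bV_s$, $\bV_s\otimes\bW_s+\bW_s\otimes\bV_s$, $\bV_s\otimes\bZ_s+\bZ_s\otimes\bV_s$ pieces) are absorbed, because $\bV_s$ has first component $1$ so the divergent column is exactly that rank-one direction. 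The remaining $\ln\rho$ piece, $\bigl(\tfrac{\frak{a}_s^2}{2}\,\bW_s\otimes\bW_s+\bT_s\otimes\bT_s\bigr)\ln\rho$, has zero first component and survives: the Schur complement $\widetilde M$ is \emph{not} bounded, it is $\ln\rho$--divergent on an $N$-dimensional subspace. This surviving stratum is not a nuisance to be cancelled but the very source of the result: since $\ln\rho<0$ it supplies exactly $N$ negative directions, and the $N$ factors of $\ln\rho$ in $\det\Hess\Action$ are what make the sign relation between $\Mom''$ and $\det\Hess\Action$ flip between $N=1$ and $N=2$ (the true asymptotics are $\det\Hess\Action\sim C\,\Mom''\,(\ln\rho)^N/\rho^2$ with $C>0$); your "bounded $\widetilde M$'' picture cannot reproduce this dichotomy.

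The second gap is the identification of the direction carrying $\Mom''$. It is not the $\speed$-coordinate direction: at fixed endstate one has $\dif_\speed=-\bS_s\cdot\nabla$ with $\bS_s=\Sb\,\bV_s=\transp{(-\impulse_s,\transp{(\bB^{-1}\bU_s)},-1)}$, a vector mixing the $\mu$, $\blambda$ and $\speed$ components, and the pure $\partial_\speed^2\Action$ entry in fact diverges like $\impulse_s^2/\rho^2$. It is only against $\bS_s$ that all divergent strata are killed (by $\bS_s\cdot\bV_s=\bS_s\cdot\bW_s=\bS_s\cdot\bT_s=0$) and the bounded coefficient yields $\bS_s\cdot\bD_s\,\bS_s=\Mom''$. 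This is why the paper does not Schur-reduce in the original coordinates but conjugates by $\Pb_s=(\bE\;\;\Sb\bV_s\;\;\Sb\bT_s\;\;\Sb\bW_s)$ (resp.\ $(\bE\;\;\Sb\bV_s\;\;\Sb\bW_s)$ for $N=1$), where scale separation puts $\rho^{-2}$, $\Mom''+{\mathcal O}(\rho\ln\rho)$, and negative-definite $\ln\rho$ blocks on the diagonal, and then reads off the signature from Sylvester's rule. Finally, note that the first bullet also needs the expansion of $\partial_\mu^2\Action=\bE\cdot\Hess\Action\,\bE\sim 2\Y^s_0\frak{a}_s^2/(\h_s^2\rho^2)>0$ itself; the logarithmic blow-up of $\Xi$ alone does not rigorously give its monotonicity in $\mu$.
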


\section{Derivation of asymptotic expansions}\label{s:expansion}

\subsection{Qualitative properties of profile equations}

Now our purpose is to investigate the small amplitude and soliton limits in the ODEs governing the traveling profiles $\ubU=(\ubv,\ubu)$.

Let us start by making these ODEs more explicit. A traveling profile $\ubU$ of speed $\speed$ is characterized by the existence of $(\blambda,\mu)\in \R^{N+1}$ such that \eqref{eq:EL}-\eqref{eq:ELham} hold true.
Let us first look at the most complicated case
 $N=2$. Recalling the forms in \eqref{eq:Ham} and \eqref{eq:B} of $\Ham$ and $\bB$
we see that the equations in \eqref{eq:EL}-\eqref{eq:ELham} are equivalent to
$$\left\{\begin{array}{l}\partial_\bv\en(\ubv,\ubv_x)\,-\,\partial_x(\partial_{\bv_x}\en(\ubv,\ubv_x))\,+ \,\partial_\bv \Ec(\ubv,\ubu)\,+\,(\speed/b)\, \ubu\,+\,\lambda_1\,=\,0\,,\\ [2\jot]
\partial_\bu \Ec(\ubv,\ubu)\,+\,(\speed/b)\, \ubv\,+\,\lambda_2\,=\,0\,,\\ [2\jot]
\ubv_x\,\partial_{\bv_x}\en(\ubv,\ubv_x)\,-\,\en(\ubv,\ubv_x)\,-\,\Ec(\ubv,\ubu)\,-\,(\speed/b)\,\ubv\,\ubu\,-\,\lambda_1\,\ubv\,-\,\lambda_2\,\ubu\,=\,\mu\,,
\end{array}\right.$$
Under the further assumption (see Assumption \ref{as:ham}) that $\en$ is quadratic in $\bv_x$ and $\Ec$ is  
quadratic\footnote{Assumptions of quadraticity on $\Ec$ is made for pure convenience to enable the explicit algebraic solving of \eqref{eq:profalg} resulting in an explicit formula for $g$ below. Relaxing the assumption on $\Ec$ would require only notational changes and invoking the implicit function theorem. Likewise we also expect quadraticity of $\En$ to be unnecessary, but we anticipate the computational cost to be heavier.} in $\bu$ with
$$\kappa:=\partial_{\bv_x}^2 \en\,>0\,,\qquad \tau:=\partial_{\bu}^2 \Ec\,>\,0\,,$$
and the normalization $\Ec(\cdot,0)\equiv0$, these profile equations are 
 equivalent\footnote{The proof of the equivalence, omitted here as classical, requires some care at points where $\ubv_x$ vanish.} for non constant solutions 
to the reduced algebro-differential system
\begin{spreadlines}{2\jot}
\begin{align}
& \tau(\ubv) \ubu\,+\,(\speed/b)\, \ubv\,+\,\lambda_2\,=\,0\,,\label{eq:profalg}\\
& \tfrac12 \kappa(\ubv)\,\ubv_x^2\,+\,\Potential(\ubv;\speed,\blambda)\,=\,\mu\,,\label{eq:profedo}
\end{align}
\end{spreadlines}
where 
$$\Potential(\bv;\speed,\blambda):=\,-\,\free(\bv)\,-\,\tfrac12 \tau(\bv)\,g(\bv;\speed,\lambda_2)^2
\,-\,
(\speed/b)\,\bv\,g(\bv;\speed,\lambda_2)\,-\,\lambda_1\,\bv\,-\,\lambda_2\,g(\bv;\speed,\lambda_2)\,,$$
$$\free(\bv):= \en(\bv,\bv_x)-\tfrac12 \kappa(\bv)\,\bv_x^2\,,\qquad\qquad g(\bv;\speed,\lambda):=-((\speed/b)\, \bv\,+\,\lambda)/\tau(\bv)\,.$$
For stationary solutions, \eqref{eq:EL}-\eqref{eq:ELham} is actually equivalent to \eqref{eq:profalg}-\eqref{eq:profedo} and $\partial_\bv \Potential(\ubv;\speed,\blambda)=0$. 
Using the definition of $g(\bv;\speed,\blambda)$ here above we can simplify a little bit the expression of the potential and write it as
$$\Potential(\bv;\speed,\blambda)=\,-\,\free(\bv)\,+\,\tfrac12 \tau(\bv)\,g(\bv;\speed,\lambda_2)^2\,-\,\lambda_1\,\bv\,.$$
In the case $N=1$, there is no component $\bu$ to eliminate, and the sole profile equation  \eqref{eq:ELham} is readily of the form \eqref{eq:profedo} with
$$\Potential(\bv;\speed,\lambda)=\,-\,\free(\bv)\,-\,\tfrac12 \tfrac{\speed}{b}\, \bv^2\,-\,\lambda\,\bv\,.$$

Given a reference state $\bU_*\in \R^N$, for all $\speed\in \R$ we can always find $(\blambda,\mu)\in \R^{N+1}$ such that $\bU_*$ is a stationary solution to \eqref{eq:EL}-\eqref{eq:ELham}. Indeed, in the case $N=2$ it suffices to define $\lambda_2$ from \eqref{eq:profalg} by
$$\lambda_2=-\tau(\bv_*)\bu_*-(\speed/b)\bv_*\,,$$
then take $$\lambda_1\,=\,-\,\free'(\bv_*)-\tfrac12 \tau'(\bv_*)\bu_*^2-(\speed/b)\bu_*\,,$$ to ensure that
$$\partial_\bv \Potential(\bv_*;\speed,\blambda)=0\,,$$ and finally choose
$$\mu= \Potential(\bv_*;\speed,\blambda)$$
because of \eqref{eq:profedo}. In the case $N=1$ we just take
$$\lambda\,=\,-\,\free'(\bv_*)-\tfrac{\speed}{b}\bv_*\,,\qquad\mu= \Potential(\bv_*;\speed,\lambda)\,.$$
The nature of the steady state $\bU_*$ as a solution to \eqref{eq:EL}-\eqref{eq:ELham} is the same as the nature of $\bv_*$ as a solution to \eqref{eq:profedo}, which merely depends on the sign of 
$\partial_v^2\Potential(\bv_*;\speed,\lambda)$. Since $\kappa$ is positive, the mapping 
$$(\bv,\bv_x)\mapsto \tfrac12 \kappa(\bv)\,\bv_x^2\,+\,\Potential(\bv;\speed,\blambda)$$
has 
\begin{itemize}
\item a \emph{center point} at $(\bv_*,0)$ if  $\partial_v^2\Potential(\bv_*;\speed,\blambda)>0$,
\item a \emph{saddle point} at $(\bv_*,0)$ if $\partial_v^2\Potential(\bv_*;\speed,\blambda)<0$.
\end{itemize}
In the former case, there are small amplitude solutions to \eqref{eq:EL}-\eqref{eq:ELham} oscillating around $\bU_*$. In the latter case, $\bU_*$ is a candidate for the endstate of a soliton, the existence of which requires that $\Potential(\cdot;\speed,\blambda)$ achieves the value $\mu$ at some other point than $\bv_*$, say $\bv^*$ such that $\partial_\bv \Potential(\bv^*;\speed,\blambda)\neq 0$.

At fixed $(\speed,\blambda)$, the most basic phase portrait for the planar system associated with \eqref{eq:profedo} arises when the potential $\Potential(\cdot;\speed,\blambda)$ admits a local minimum $\bv_0$ (center point), and a local maximum $\bv_s$ (saddle point) associated with 
another point $\bv^s$ at which $\Potential(\cdot;\speed,\blambda)$ achieves the same value $\mu_s:=\Potential(\bv_s;\speed,\blambda)$ and $\partial_\bv\Potential(\bv^s;\speed,\blambda)\neq 0$,
in such a way that $\Potential(\cdot;\speed,\blambda)$ is monotonically varying from $\mu_0:=\Potential(\bv_s;\speed,\blambda)$ to 
$\mu_s$ in between $\bv_0$ and $\bv_s$, and in between $\bv_0$ and $\bv^s$ as well. In this case, the saddle point $(\bv_s,0)$ is associated with a homoclinic loop corresponding to a solitary wave 
of endstate $\bv_s$, inside which - in the phase plane - there are closed orbits corresponding to periodic profiles. The amplitude of periodic waves tends to zero when $\mu\searrow \mu_0$, and their period tends to infinity when $\mu\nearrow \mu_s$. 

The reader may refer to Figure \ref{fig:portrait-generic}, which shows the phase portrait associated with KdV traveling waves, for which $\Potential (v;\lambda,\speed)= \tfrac16 v^3 \,-\,\tfrac{\speed}{2} v^2\,-\,\lambda\,v$, and is qualitatively the same for all potentials that vary as in Table \ref{tb:var} below.
\begin{table}[hb]
$$\begin{array}{c|cccccccccccccc}
\bv &  & & v_1 & & v_s & & v_2  &  & v_0 & & v_3 & & v^s & \\ \hline
\Potential' & & & + & & 0 & & - & & 0 & & + & &   \\  \hline
& & &  & & \mu_s & & & & & &  & &   \mu_s & \\
 & &  & & \nearrow &  & \searrow  & & & & & & \nearrow &   \\ [-5pt]
\Potential & & & \mu & & & & \mu & & & & \mu & &   \\
 & &  \nearrow &  &  &  &  &  & \searrow  & &  \nearrow & &  &   \\
  & &   &  &  &  &  &   &  & \mu_0 & & &  &   \\
\end{array}$$
\caption{Variations of potential}\label{tb:var}
\end{table}

\begin{figure}[H]
\begin{center}
\includegraphics[width=70mm]{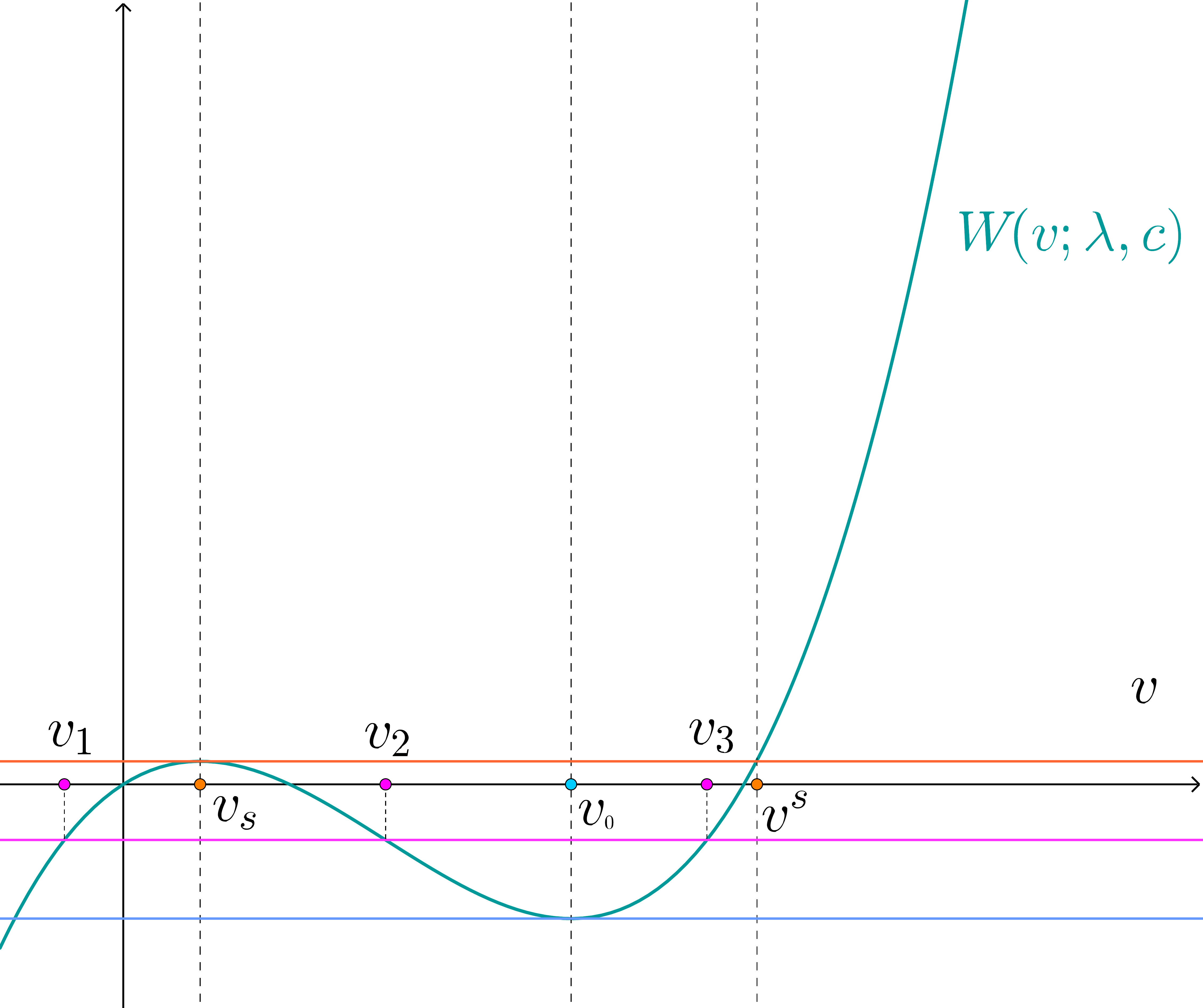}

\vspace{2mm}

\includegraphics[width=70mm]{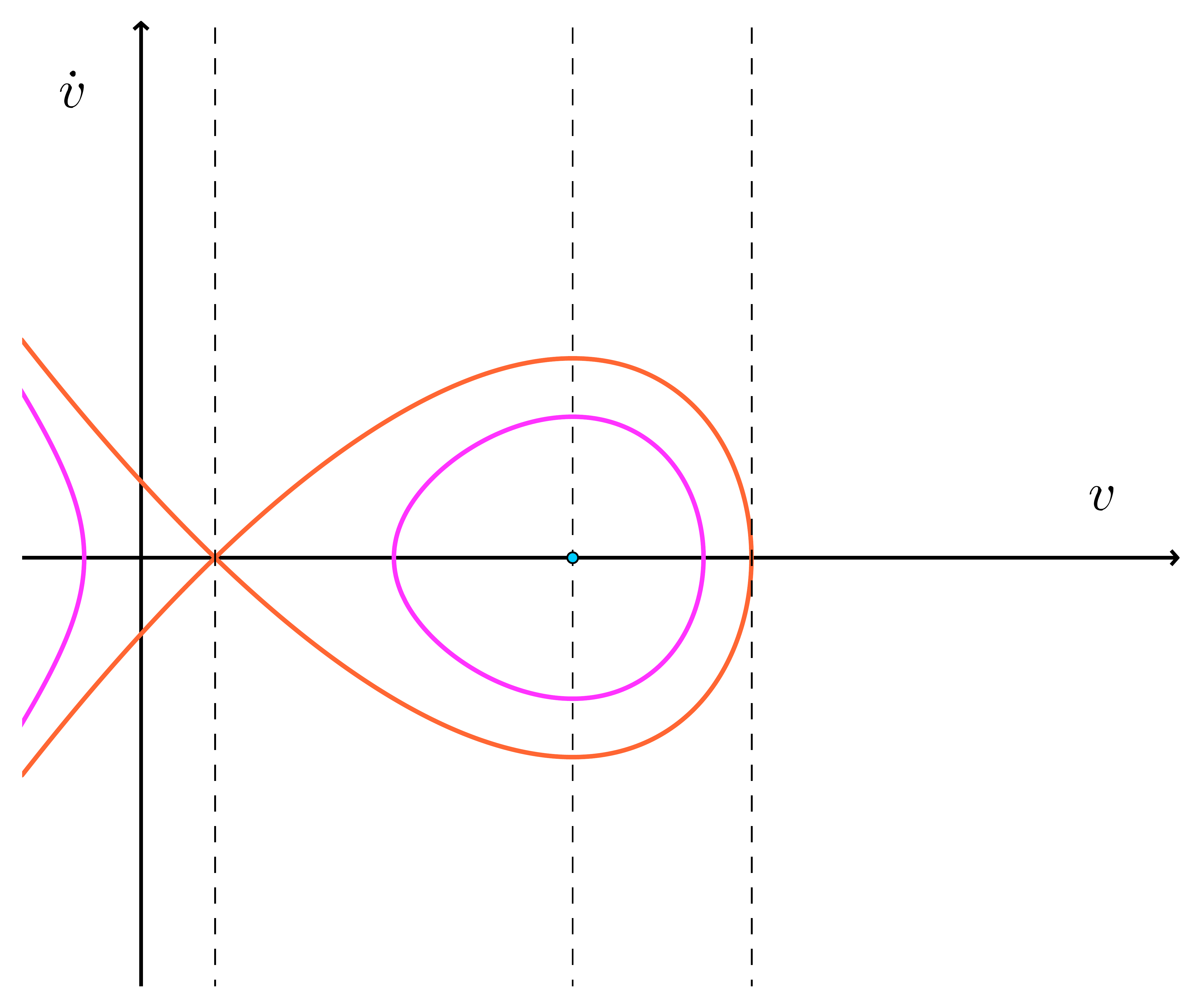}

\caption{`Generic' potential (top) and phase portrait (bottom) for traveling waves}\label{fig:portrait-generic}
\end{center}
\end{figure}

Phase portraits like on Figure \ref{fig:portrait-generic} display a `bright soliton', whose profile has a bump.
A symmetric phase portrait arises when the point $\bv^s$ is found to be lower than $\bv_s$, which yields a `dark soliton', or hole. 
When $\free'''$ changes sign there may be a larger number of stationnary solutions and the phase portrait may be quite complicated, see \cite{BNR-JNLS14}. For a simple instance, by enlarging the view one may find both sorts of solitary waves, dark and bright, corresponding to loops homoclinic to the same endstate, and closed orbits as well surrounding them in the phase plane, decribing periodic waves asymptotically obtained by gluing both solitary waves together.

It is important to note however that our theorem relies on local considerations and does not require any topological assumptions. It applies to any family of periodic waves collapsing to either a stationnary state or a single homoclinic loop. We also expect - but do not pursue this goal here - that the same techniques could be applied to analyze the case, alluded to above, where periodic loops shrink to two homoclinics. In contrast, cases involving the concatenation of homoclinics and heretoclinics are not expected to persist under variation of parameters $(\speed,\blambda)$ and do not fit our present framework. 

Concerning stability of the phase portrait under small perturbations of the parameters $(\speed,\blambda)$, for the sake of clarity and concision, let us concentrate on the basic situation described in Table~\ref{tb:var}.

\begin{proposition}\label{prop:perturbpp} Assume that $(\speed_*,\blambda_*)\in \R^{N+1}$ and 
$\bv_{s*}<\bv_{0*}<\bv^{s*}$ are such that
$$\mu_{0*}:=\Potential(\bv_{0*};\speed_*,\blambda_*)<\mu_{s*}:=\Potential(\bv_{s*};\speed_*,\blambda_*)=\Potential(\bv^{s*};\speed_*,\blambda_*)\,,$$
$$\partial_\bv\Potential(\bv_{0*};\speed_*,\blambda_*)=\partial_\bv\Potential(\bv_{s*};\speed_*,\blambda_*)=0\,,$$
$$
\partial_\bv^2\Potential(\bv_{s*};\speed_*,\blambda_*)<0\,,\;
\partial_\bv^2\Potential(\bv_{0*};\speed_*,\blambda_*)>0\,,$$
$$\forall \bv\in (\bv_{s*},\bv_{0*})\,,\quad \partial_\bv\Potential(\bv;\speed_*,\blambda_*)<0\,,$$
$$\forall \bv\in (\bv_{0*},\bv^{s*}]\,,\quad \partial_\bv\Potential(\bv;\speed_*,\blambda_*)>0\,.$$
Then there is a neighborhood of $(\speed_*,\blambda_*)$ and $\bv_s$, $\bv^s$, $\bv_0$ smoothly parametrized by $(\speed,\blambda)$ in this neighborhood such that
$$
\Potential(\bv_{0};\speed,\blambda)<
\Potential(\bv_{s};\speed,\blambda)=\Potential(\bv^{s};\speed,\blambda)\,,$$
$$\partial_\bv\Potential(\bv_{0};\speed,\blambda)=\partial_\bv\Potential(\bv_{s};\speed,\blambda)=0\,,$$
$$
\partial_\bv^2\Potential(\bv_{s};\speed,\blambda)<0\,,\;
\partial_\bv^2\Potential(\bv_{0};\speed,\blambda)>0\,,$$
$$\forall \bv\in (\bv_{s},\bv_{0})\,,\quad \partial_\bv\Potential(\bv;\speed,\blambda)<0\,,$$
$$\forall \bv\in (\bv_{0},\bv^{s}]\,,\quad \partial_\bv\Potential(\bv;\speed,\blambda)>0\,.$$
\end{proposition}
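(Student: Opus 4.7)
The plan is to establish the claims in three stages: continue the three distinguished points $\bv_0$, $\bv_s$, $\bv^s$ as smooth functions of $(\speed,\blambda)$ by the implicit function theorem, transfer the strict inequalities on values and second derivatives by continuity, and finally transfer the monotonicity statements by combining uniform continuity on compact subintervals with local arguments near the critical points.

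First I would continue the saddle $\bv_s$ and the center $\bv_0$. Applying the implicit function theorem to the equation $\partial_\bv\Potential(\bv;\speed,\blambda)=0$ near $(\bv_{s*},\speed_*,\blambda_*)$ and $(\bv_{0*},\speed_*,\blambda_*)$, the hypotheses $\partial_\bv^2\Potential(\bv_{s*};\speed_*,\blambda_*)<0$ and $\partial_\bv^2\Potential(\bv_{0*};\speed_*,\blambda_*)>0$ provide precisely the required nondegeneracy; this yields smooth maps $(\speed,\blambda)\mapsto \bv_s$ and $(\speed,\blambda)\mapsto \bv_0$ with $\bv_s(\speed_*,\blambda_*)=\bv_{s*}$ and $\bv_0(\speed_*,\blambda_*)=\bv_{0*}$. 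By continuity of $\partial_\bv^2\Potential$, the strict inequalities on the second derivatives persist on a neighborhood. To continue $\bv^s$, I would apply the implicit function theorem to $G(\bv,\speed,\blambda):=\Potential(\bv;\speed,\blambda)-\Potential(\bv_s(\speed,\blambda);\speed,\blambda)$ at $(\bv^{s*},\speed_*,\blambda_*)$: it vanishes there and $\partial_\bv G(\bv^{s*},\speed_*,\blambda_*)=\partial_\bv\Potential(\bv^{s*};\speed_*,\blambda_*)>0$, so a smooth local solution $\bv^s(\speed,\blambda)$ exists. The inequality $\Potential(\bv_0;\speed,\blambda)<\Potential(\bv_s;\speed,\blambda)$ for nearby parameters then follows from the strict inequality at the reference parameters by continuity.

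It remains to propagate the monotonicity statements; I detail the interval $(\bv_s,\bv_0)$, the argument on $(\bv_0,\bv^s]$ being entirely analogous. Fix small radii $\delta>0$ so that on the neighborhoods $J_s:=[\bv_{s*}-\delta,\bv_{s*}+\delta]$ and $J_0:=[\bv_{0*}-\delta,\bv_{0*}+\delta]$ the strict signs of $\partial_\bv^2\Potential(\cdot;\speed,\blambda)$ persist uniformly for $(\speed,\blambda)$ in a small neighborhood $V$ of $(\speed_*,\blambda_*)$; then for such parameters $\partial_\bv\Potential(\cdot;\speed,\blambda)$ is strictly monotonic on each of $J_s$ and $J_0$ and vanishes there only at the unique continuation $\bv_s(\speed,\blambda)\in J_s$ or $\bv_0(\speed,\blambda)\in J_0$, which fixes the sign of $\partial_\bv\Potential$ on $(\bv_s,\bv_s+\delta]$ and on $[\bv_0-\delta,\bv_0)$. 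On the complementary compact subinterval $K:=[\bv_{s*}+\delta,\bv_{0*}-\delta]$, the function $\partial_\bv\Potential(\cdot;\speed_*,\blambda_*)$ is bounded above by some $-\varepsilon<0$, so by uniform continuity on $K\times V$ (shrinking $V$ if necessary) we still have $\partial_\bv\Potential<0$ on $K$ for all $(\speed,\blambda)\in V$. Concatenating these three pieces yields $\partial_\bv\Potential(\cdot;\speed,\blambda)<0$ on the full open interval $(\bv_s(\speed,\blambda),\bv_0(\speed,\blambda))$.

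The only mild subtlety, and thus the main obstacle, is in the last step: the endpoints of the open interval $(\bv_s,\bv_0)$ depend on the parameters and $\partial_\bv\Potential$ vanishes there, so a naive uniform-continuity argument on a fixed compact set cannot do the job by itself. The local second-derivative argument near the endpoints is what supplies the missing information. Everything else reduces to routine implicit function theorem applications and persistence of strict inequalities, and as a byproduct provides smooth dependence of $\bv_s$, $\bv_0$, $\bv^s$ on $(\speed,\blambda)$ which is implicit in the statement and will be needed downstream.
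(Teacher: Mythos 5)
Your proof is correct and follows essentially the same route as the paper: implicit function theorem continuations of $\bv_0$, $\bv_s$, $\bv^s$ (the paper packages $(\bv_s,\bv^s)$ into a single two-dimensional IFT, you solve for them sequentially, which is equivalent), persistence of the strict inequalities by continuity, and the sign of $\partial_\bv\Potential$ propagated by the local second-derivative argument near the moving critical endpoints combined with a compactness/uniform-continuity argument away from them. Nothing is missing.
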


\begin{proof}
By the implicit function theorem applied to $\partial_\bv\Potential$ we find $\bv_0$ such that $\partial_\bv\Potential(\bv_{0};\speed,\blambda)=0$,
and by the implicit function theorem applied to 
$$(v,w)\mapsto(\partial_\bv\Potential(\bv;\speed,\blambda), \Potential(w;\speed,\blambda)-\Potential(\bv;\speed,\blambda))$$
we find $(\bv_s,\bv^s)$ such that 
$\partial_\bv\Potential(\bv_{s};\speed,\blambda)=0$ and $\Potential(\bv_{s};\speed,\blambda)=\Potential(\bv^{s};\speed,\blambda)$.
The inequalities 
$$\partial_\bv\Potential(\bv^{s};\speed,\blambda)>0\,,\;
\partial_\bv^2\Potential(\bv_{s};\speed,\blambda)<0\,,\;
\partial_\bv^2\Potential(\bv_{0};\speed,\blambda)>0\,,$$
which are valid at $(\speed_*,\blambda_*)$,
are preserved in a neighborhood of  $(\speed_*,\blambda_*)$, and they imply $\partial_\bv\Potential(\bv;\speed,\blambda)<0$ for all
$\bv\in (\bv_{s},\bv_{0})$ close to either  $\bv_{s}$ or $\bv_{0}$, and $\partial_\bv\Potential(\bv;\speed,\blambda)>0$ for all 
$\bv\in (\bv_{0},\bv^{s}]$ close to either  $\bv_{0}$ or $\bv^{s}$.
Away from $\bv_{s}$ and $\bv_{0}$ and for small variations of the parameters $(\speed,\blambda)$, $\partial_\bv\Potential(\bv;\speed,\blambda)$ does not vanish and thus keep the same sign.
\end{proof}

We can now investigate `the small amplitude limit' and `the soliton limit' in the following way.
If $(\speed_*,\blambda_*,\bv_{s*},\bv_{0*},\mu_{0*},\mu_{s*})$ satisfy the assumptions of Proposition~\ref{prop:perturbpp},
the small amplitude limit consists in considering the closed orbits around $\bv_0$ when $(\mu,\blambda,\speed)$ goes to $(\speed_*,\blambda_*,\mu_{0*})$ with $\mu>\mu_{0}(\speed,\blambda):=\Potential(\bv_0(\speed,\blambda);\speed,\blambda)$, and the soliton limit concerns closed orbits of large period that are inside the loop homoclinic to $\bv_s$ when  $(\mu,\blambda,\speed)$ goes to $(\speed_*,\blambda_*,\mu_{s*})$ with $\mu<\mu_{s}(\speed,\blambda):=\Potential(\bv_s(\speed,\blambda);\speed,\blambda)$.

For $\mu\in (\mu_0(\speed,\blambda),\mu_s(\speed,\blambda))$ there are two roots of $\mu-\Potential(\cdot;\speed,\blambda)$ in $(\bv_s(\speed,\blambda),\bv^s(\speed,\blambda))$, which we denote by $\bv_k(\mu,\blambda,\speed)$, $k=2,3$, such that
$$\bv_s<\bv_2<\bv_0<\bv_3\,.$$
At some point, for $\mu$ close to $\mu_s$, it will be useful to introduce a third root $\bv_1<\bv_s$ of $\mu-\Potential$, close to $\bv_s$. The existence of such a root is ensured as long as the potential $\Potential$ achieves the value $\mu$ on the left of $\bv_s$ and follows from 
$$
\partial_\bv\Potential(\bv_{s*};\speed_*,\blambda_*)=0\,,\qquad
\partial_\bv^2\Potential(\bv_{s*};\speed_*,\blambda_*)<0\,,
$$
for $(\speed,\blambda,\mu)$ in a neighborhood of $(\speed_*,\blambda_*,\mu_s)$, with $\mu<\mu_s$.

\subsection{Asymptotics of basic quantities}\label{s:asbasic}
\subsubsection{Preparation}
We are primarily interested in the asymptotic behaviors of the following quantities.
\begin{itemize}
\item The action integral 
$$\Action= \int_{-\Xi/2}^{\Xi/2}
(\Ham[\ubU]+\speed \Impulse(\ubU) + \blambda\cdot \ubU +\mu)\,\dif \xi\,,$$
where $g(\ubv;\speed,\lambda_2)$ is to be substituted for $\ubu$ in $\ubU=(\ubv,\ubu)$ in the case $N=2$ because of \eqref{eq:profalg}.  
In view of \eqref{eq:profedo} we may equivalently write
$$\Action=\oint\partial_{\bv_x} \en(\bv,\bv_x)\,\dif \bv\,=\,2\,\int_{\bv_2(\mu,\blambda,\speed)}^{\bv_3(\mu,\blambda,\speed)} {\sqrt{2\cap(\bv)\,(\mu-\Potential(\bv;\speed,\blambda))}}\,\dif \bv\,.$$

\item The period $\Xi\,=\,\partial_\mu\Action$ of the waves, given by
$$\Xi= \int_{-\Xi/2}^{\Xi/2}\dif \xi= \oint \frac{\dif \bv}{\bv_x} = 2\int_{\bv_2(\mu,\blambda,\speed)}^{\bv_3(\mu,\blambda,\speed)} \frac{\dif \bv}{\sqrt{2(\mu-\Potential(\bv;\speed,\blambda))/\cap(\bv)}}\,,$$

\item More generally, nonlinear integrals along the wave profiles
$$\int_{-\Xi/2}^{\Xi/2} G(\ubv(\xi))\,\dif \xi\,=\ \oint \frac{G(\bv)\,\dif \bv}{\bv_x} \,=\,2\int_{\bv_2(\mu,\blambda,\speed)}^{\bv_3(\mu,\blambda,\speed)} 
\frac{G(\bv)\,\dif \bv}{\sqrt{2(\mu-\Potential(\bv;\speed,\blambda))/\cap(\bv)}}\,,$$
and in particular the other first order derivatives of the action integral, which by \eqref{eq:derTheta} read
$$\partial_{\blambda_1}  \Action(\mu,\blambda,\speed) \,=\,\int_{-\Xi/2}^{\Xi/2}\ubv(\xi)\,\dif \xi= \oint \frac{\bv\,\dif \bv}{\bv_x} \,=\,2\int_{\bv_2(\mu,\blambda,\speed)}^{\bv_3(\mu,\blambda,\speed)} 
\frac{\bv\,\dif \bv}{\sqrt{2(\mu-\Potential(\bv;\speed,\blambda))/\cap(\bv)}}\,,$$
$$\partial_{\blambda_2}  \Action(\mu,\blambda,\speed)\, =\,\int_{-\Xi/2}^{\Xi/2}g(\ubv(\xi);\speed,\lambda_2)\,\dif \xi
\,=\,2\int_{\bv_2(\mu,\blambda,\speed)}^{\bv_3(\mu,\blambda,\speed)} 
\frac{g(\bv;\speed,\lambda_2)\,\dif \bv}{\sqrt{2(\mu-\Potential(\bv;\speed,\blambda))/\cap(\bv)}}\,,$$
$$\partial_\speed\Action(\mu,\blambda,\speed)\,=\, \int_{-\Xi/2}^{\Xi/2}q(\ubv(\xi);\speed,\lambda_2)\,\dif \xi
 \,=\,2\int_{\bv_2(\mu,\blambda,\speed)}^{\bv_3(\mu,\blambda,\speed)} 
\frac{
q(\bv;\speed,\lambda_2)
\,\dif \bv}{\sqrt{2(\mu-\Potential(\bv;\speed,\blambda))/\cap(\bv)}}\,,$$
with
$$\impulse(\bv;\speed,\lambda):=\Impulse(\bv,g(\bv;\speed,\lambda))\,$$
in the case $N=2$ and $$\impulse(\bv;\speed,\lambda):=\Impulse(\bv)\,,$$
actually independent of $(\speed,\lambda)$, in the case $N=1$.
\end{itemize}

Introducing in addition the function
$$\ZZ(\bv;\mu,\blambda,\speed):=\mu-\Potential(\bv;\speed,\blambda)\,,$$
we may notice that its gradient with respect to the parameters $(\mu,\blambda,\speed)$ reads
\begin{equation}\label{eq:derZNtwo}
\nabla\ZZ\,=\,\left(\begin{array}{c}1 \\ \bv \\ g(\bv;\speed,\lambda_2) \\ q(\bv;\speed,\lambda_2) \end{array}\right)\,
\end{equation}
in the case $N=2$, while 
\begin{equation}\label{eq:derZNone}
\nabla\ZZ\,=\,\left(\begin{array}{c}1 \\ \bv \\ \Impulse(\bv) \end{array}\right)\,
\end{equation}
in the simpler case $N=1$. This can be checked by direct inspection or derived from the original Euler-Lagrange structure.

Therefore, using either \eqref{eq:derZNtwo} or \eqref{eq:derZNone}, we can write as a shortcut for the first order derivatives of the action
\begin{equation}\label{eq:gradaction}
\nabla\Action(\mu,\blambda,\speed)\,=\,\int_{\bv_2(\mu,\blambda,\speed)}^{\bv_3(\mu,\blambda,\speed)} \nabla \ZZ(\bv;\mu,\blambda,\speed)\,\sqrt{\frac{2\kappa(\bv)}{\ZZ(\bv;\mu,\blambda,\speed)}}\,\dif\bv\,.
\end{equation}
This formula can of course be obtained directly by differentiation under the integral of 
$$\Action\,=\,2\,\int_{\bv_2(\mu,\blambda,\speed)}^{\bv_3(\mu,\blambda,\speed)} {\sqrt{2\cap(\bv)\,\ZZ(\bv;\mu,\blambda,\speed)}}\,\dif \bv\,,$$
recalling that by definition
$$\ZZ(\bv_2(\mu,\blambda,\speed);\mu,\blambda,\speed)\,=\,\ZZ(\bv_3(\mu,\blambda,\speed);\mu,\blambda,\speed)\,=\,0\,.$$
However, pointwise relations \eqref{eq:derZNtwo} and \eqref{eq:derZNone} will turn to be useful in later arguments.

\begin{notation}\label{not:derivatives}
Here above and in what follows $\nabla$ denotes the gradient with respect to $(\mu,\blambda,\speed)$. For functions depending also on other variables, we will denote their other partial derivatives with subscripts. For instance,
$\partial_\bv\ZZ$ will be denoted by $\ZZ_\bv$. 
\end{notation}

The derivation of the asymptotic behaviors of our basic integrals begins with a change of variables that eliminates singularities at endpoints, which is of interest even for the less singular of them all, namely the action integral
$$\Action=\,2\,\int_{\bv_2(\mu,\blambda,\speed)}^{\bv_3(\mu,\blambda,\speed)} {\sqrt{2\cap(\bv)\,\ZZ(\bv;\mu,\blambda,\speed)}}\,\dif \bv\,.$$
A rather natural change of variables consists in writing, for $\bv\in (\bv_2,\bv_3)$,
$$\bv=\bv_2+\sigma (\bv_3-\bv_2)\,,\;\sigma=(1+\sin\theta)/2\in (0,1)\,,\;\theta\in (-\pi/2,\pi/2)\,.$$
Noticing that, by definition of $\bv_2$ and $\bv_3$ and by Taylor's formula
$$
\ZZ(\bv;\mu,\blambda,\speed)=(\bv_3- \bv)(\bv-\bv_2) \Redpot(\bv,\bv_2,\bv_3;\speed,\blambda)\,,$$
with
\begin{equation}\label{eq:redpot}
\Redpot(\bv,\bw,\bz;\speed,\blambda):=
\int_{0}^{1}\int_{0}^{1}t\partial_\bv^2\Potential(\bw+t(\bz-\bw)+ts(\bv-\bz);\speed,\blambda)\,\dif s\dif t\,,
\end{equation}
we see that, on the one hand,
$$\Action=\,(\bv_3-\bv_2)^2\,\int_{-\pi/2}^{\pi/2} {\sqrt{\cap(\V(\theta;\bv_2
,\bv_3
))\Redpot(\V(\theta;\bv_2
,\bv_3
),\bv_2
,\bv_3
;\speed,\blambda)/2}}\;\cos^2\theta \;\dif \theta\,,$$
and on the other hand, for any smooth function $\phi$,
$$\Iphi:=\int_{\bv_2(\mu,\blambda,\speed)}^{\bv_3(\mu,\blambda,\speed)} 
\frac{\phi(\bv)\,\dif \bv}{\sqrt{
\ZZ(\bv;\mu,\blambda,\speed)}}\,=\,
\int_{-\pi/2}^{\pi/2}\frac{\phi(\V(\theta;\bv_2
,\bv_3
))\,\dif \theta}{\sqrt{\Redpot(\V(\theta;\bv_2
,\bv_3
),\bv_2
,\bv_3
;\speed,\blambda)}}\,,$$
where 
 \begin{equation}\label{eq:V}
\V(\theta;\bv_2,\bv_3):=\bv_2+(1+\sin\theta) (\bv_3-\bv_2)/2\,,
\end{equation}
and $\bv_2=\bv_2(\mu,\blambda,\speed)$, $\bv_3=\bv_3(\mu,\blambda,\speed)$ in the integral expressions of $\Action$ and $\Iphi$ here above.
By definition, $\V(\theta;\bv_2,\bv_3)$ belongs to $(\bv_2,\bv_3)$ for $\theta\in (-\pi/2,\pi/2)$, and in particular goes to $\bv_{0*}$, the limiting state of both $\bv_2$ and $\bv_3$ in the small amplitude limit. We also see from the definition of $\Redpot$ that $\Redpot(\V(\theta;\bv_2(\mu,\blambda,\speed),\bv_3(\mu,\blambda,\speed)),\bv_2(\mu,\blambda,\speed),\bv_3(\mu,\blambda,\speed);\speed,\blambda)$ goes to $\frac12 \partial_\bv^2\Potential(\bv_{0*};\speed_*,\blambda_*)>0$  in the small amplitude limit. This readily implies that
$$\int_{\bv_2(\mu,\blambda,\speed)}^{\bv_3(\mu,\blambda,\speed)} 
\frac{\phi(\bv)\,\dif \bv}{\sqrt{\mu-\Potential(\bv;\speed,\blambda)}}\,\rightarrow\,
\frac{\pi \,\phi(\bv_{0*})}{\sqrt{\partial_\bv^2\Potential(\bv_{0*};\speed_*,\blambda_*)/2}}\,$$
in the small amplitude limit. In particular, by applying this to $\phi(\bv)=\sqrt{2\kappa(\bv)}$, we recover in the small amplitude limit the period 
$$\Xi_{0*}:={2\pi}\sqrt{\kappa(\bv_{0*})/\partial_\bv^2\Potential(\bv_{0*};\speed_*,\blambda_*)}$$
of \emph{harmonic} waves, which are by definition solutions to the quadratic equation
$$\tfrac12 \cap(\bv_{0*}) \ubv_x^2+\tfrac{1}{2}\partial_\bv^2\Potential(\bv_{0*};\speed_*,\blambda_*)\, \ubv^2= \mbox{constant}\,.$$

In order to study the soliton limit, we had better use the factorization
$$\mu-\Potential(\bv;\speed,\blambda)=-(\bv- \bv_1)(\bv-\bv_2) \Redpot(\bv,\bv_1,\bv_2;\speed,\blambda)\,,$$
and write
$$\Action=\,2\,(\bv_3-\bv_2)^2\int_{0}^{1} {\sqrt{-2\cap(\bv_2+\sigma(\bv_3-\bv_2))\Redpot(\bv_2+\sigma(\bv_3-\bv_2)),\bv_1,\bv_2;\speed,\blambda)}}\,\sqrt{\sigma\,(\sigma+\rho)} \;\dif \sigma
$$
and
$$\Iphi
=
\int_{0}^{1}\frac{\phi(\bv_2+\sigma(\bv_3-\bv_2))}{\sqrt{
-\Redpot(\bv_2+\sigma(\bv_3-\bv_2),\bv_1,\bv_2;\speed,\blambda)
}}\,\frac{\dif \sigma}{\sqrt{\sigma(\sigma+\rho)}}
$$
with
$$\rho:=\frac{\bv_2-\bv_1}{\bv_3-\bv_2}$$
where for simplicity we have omitted to write the dependence of 
$\bv_1$, $\bv_2$, $\bv_3$, and $\rho$ on $(\mu,\blambda,\speed)$.
In the soliton limit, $\rho$ goes to zero and the integral here above goes to infinity unless $\phi(\bv_{s*})=0$. 
This is in particular consistent with the fact that the period $\Xi$ - given by $I(\phi)$ with $\phi(\bv)=\sqrt{2\kappa(\bv)}$ - goes to infinity in the soliton limit.

\subsubsection{Asymptotics of roots and their gradients}
The starting point of our asymptotic expansions lies in the expansion of roots $\bv_1$, $\bv_2$, $\bv_3$ of $\ZZ(\cdot;\mu,\blambda,\speed)= \mu -\Potential(\cdot;\speed,\blambda)$, and of their gradients. These expansions are gathered in Proposition~\ref{prop:asvgradv} below.

Before stating this series of expansions, let us formulate our main assumptions and introduce a few additional pieces of notation.

\begin{assumption}\label{as:main}
We consider a potential $\Potential$ that is smoothly defined on  $I \times \Lambda$, where $I$ is an open interval in $\R$, and $\Lambda$ is an open subset of $\R^{N+1}$. We also assume the existence of smooth functions $\bv_0$, $\bv_s$, $\bv^s$ on $\Lambda$ 
valued in $I$ and 
such that for all $(\speed,\blambda)\in \Lambda$,
$$
\bv_s(\speed,\blambda)<\bv_0(\speed,\blambda)
<\bv^s(\speed,\blambda)\,,$$
$$\mu_{0}(\speed,\blambda):=
\Potential(\bv_{0}(\speed,\blambda);\speed,\blambda)<\mu_{s}(\speed,\blambda):=
\Potential(\bv_{s}(\speed,\blambda);\speed,\blambda)=\Potential(\bv^{s}(\speed,\blambda);\speed,\blambda)\,,$$
$$\partial_\bv\Potential(\bv_{0}(\speed,\blambda);\speed,\blambda)=\partial_\bv\Potential(\bv_{s}(\speed,\blambda);\speed,\blambda)=0\,,$$
$$
\partial_\bv^2\Potential(\bv_{s}(\speed,\blambda);\speed,\blambda)<0\,,\;
\partial_\bv^2\Potential(\bv_{0}(\speed,\blambda);\speed,\blambda)>0\,,$$
$$\forall \bv\in (\bv_{s}(\speed,\blambda),\bv_{0}(\speed,\blambda))\,,\quad \partial_\bv\Potential(\bv;\speed,\blambda)<0\,,$$
$$\forall \bv\in (\bv_{0}(\speed,\blambda),\bv^{s}(\speed,\blambda)]\,,\quad \partial_\bv\Potential(\bv;\speed,\blambda)>0\,.$$
We denote by $\Omega$ the open subset of $\R^{N+2}$ defined by
$$\Omega= \{\bmu=(\mu,\blambda,\speed)\,;\;(\speed,\blambda)\in \Lambda\,,\;
\mu_0(\speed,\blambda)<\mu<\mu_s(\speed,\blambda)\}\,,$$
and consider  $\bv_1$, $\bv_2$, $\bv_3$, smoothly defined on $\Omega$ and satisfying
$$\bv_1(\mu,\blambda,\speed)<\bv_s(\speed,\blambda)<\bv_2(\mu,\blambda,\speed)<\bv_0(\speed,\blambda)<\bv_3(\mu,\blambda,\speed)<\bv^s(\speed,\blambda)\,,$$
$$\mu=\Potential(\bv_1(\mu,\blambda,\speed);\speed,\blambda)=\Potential(\bv_2(\mu,\blambda,\speed);\speed,\blambda)=\Potential(\bv_3(\mu,\blambda,\speed);\speed,\blambda)\,,$$
$$\forall \bv\in [\bv_{1}(\speed,\blambda),\bv_{s}(\speed,\blambda))\,,\quad \partial_\bv\Potential(\bv;\speed,\blambda)>0\,,$$
for all $\bmu=(\mu,\blambda,\speed)\in \Omega$. 
\end{assumption}
For all $(\speed_*,\blambda_*)\in \Lambda$, we consider 
$$\bmu_{0*}:=(\speed_*,\blambda_*,\mu_0(\speed_*,\blambda_*))\,,\qquad\bmu_{s*}:=(\speed_*,\blambda_*,\mu_s(\speed_*,\blambda_*))\,,$$
which both belong to $\overline{\Omega}$.

\begin{remark}
For expository purposes, Assumption \ref{as:main} here above is formulated so as to deal with both limits, the small amplitude - or harmonic - limit ($\bmu 
\to \bmu_{0*}$) and the soliton limit ($\bmu 
\to \bmu_{s*}$), at the same time. In particular, it supposes the existence of exactly three roots of $\ZZ$ all the way between these two asymptotic regimes, that is for $\mu$ varying from $\mu_0$ to $\mu_s$. We recall that as far as we seek expansions that are by definition specific to one or the other of those regimes, we could localize our assumptions at either $\bmu_{0*}$ or $\bmu_{s*}$ and derive respectively the existence of $(v_2,v_3)$ when $0<\mu-\mu_0\ll 1$ and of $(v_1,v_2,v_3)$ when $0<\mu_s-\mu\ll 1$.
\end{remark} 

\begin{notation}\label{not:superscripts}
From now on, the value of functions of $(\bv;\mu,\blambda,\speed)$ at 
$\bv=\bv_s(\speed,\blambda)$, $\mu=\mu_s(\speed,\blambda)$, respectively $\bv=\bv_0(\speed,\blambda)$, $\mu=\mu_0(\speed,\blambda)$, will be denoted by a superscript $s$, respectively $0$.
For instance,
$$\nabla\ZZ^s:=\nabla\ZZ(\bv_s(\speed,\blambda);\mu_s(\speed,\blambda),\blambda,\speed)\,,
\quad \nabla\ZZ^0:=\nabla\ZZ(\bv_0(\speed,\blambda);\mu_0(\speed,\blambda),\blambda,\speed)\,.$$
The main exception is for the soliton maximal state $\bv^s$, which has its own 
meaning, and the related $\bV^s$, $\bU^s$, $\impulse^s$. Occasionally we shall use similar shorthand superscripts for evaluation at $\bv_1$, $\bv_2$, or $\bv_3$.
\end{notation}

\begin{notation}\label{not:UVWZ}
Recalling that
$$\ZZ(\bv;\mu,\blambda,\speed):=\mu-\Potential(\bv;\speed,\blambda)$$
satisfies either \eqref{eq:derZNtwo} (case $N=2$) or \eqref{eq:derZNone} (case $N=1$),
we can write
$$\nabla\ZZ^s=
\left(\begin{array}{c} 1 \\ \bU_s \\ \impulse_s\end{array}\right)\,,\qquad\quad
\nabla\ZZ^0=
\left(\begin{array}{c} 1 \\ \bU_0 \\ \impulse_0\end{array}\right)\,,
$$
with 
$\bU_s$ being the solitary wave endstate associated with parameters $(\mu_s(\speed,\blambda),\blambda,\speed)$,  
$\bU_0$ being the limiting constant state associated with parameters $(\mu_0(\speed,\blambda),\blambda,\speed)$,
 defined by $$\transp{\bU}_s:=(\bv_s(\speed,\blambda),g(\bv_s(\speed,\blambda);\speed,\lambda_2))\,,\qquad\transp{\bU}_0:=(\bv_0(\speed,\blambda),g(\bv_0(\speed,\blambda);\speed,\lambda_2))\,$$
in the case $N=2$, and merely as $$\bU_s:=\bv_s(\speed,\blambda)\;,\qquad \bU_0:=\bv_0(\speed,\blambda)$$ in the case $N=1$,
and
$$\impulse_s:=\Impulse(\bU_s)\,,\qquad \impulse_0:=\Impulse(\bU_0)\,$$
in both cases.
Furthermore, recalling $\impulse(\bv;\speed,\lambda):=\Impulse(\bv,g(\bv;\speed,\lambda))$ in the case $N=2$ and $\impulse(\bv;\speed,\lambda):=\Impulse(\bv)$ in the case $N=1$, we have for the 
derivatives of $\nabla\ZZ$ with respect to $\bv$ up to second order 
$$
\begin{array}{c}\displaystyle
\nabla\ZZ^s=\bV_s\,,\qquad
\nabla\ZZ^s_\bv=\bW_s\,,\qquad \nabla\ZZ^s_{\bv\bv}=\bZ_s\,,\\\displaystyle
\nabla\ZZ^0=\bV_0\,,\qquad\nabla\ZZ^0_\bv=\bW_0\,,\qquad \nabla\ZZ^0_{\bv\bv}=\bZ_0\,,
\end{array}$$
with for $i=0$ or $s$
\begin{equation}\label{eq:vectstwo}
\bV_i:=\left(\begin{array}{c} 1 \\ \bU_i \\ \impulse_i\end{array}\right)\,,\qquad 
\bW_i:=\left(\begin{array}{c} 0 \\ 1 \\  g_\bv^i \\ \impulse_\bv^i\end{array}\right)\,,
\qquad 
\bZ_i:=\left(\begin{array}{c} 0 \\ 0 \\  g_{\bv\bv}^i \\ \impulse_{\bv\bv}^i\end{array}\right)\,
\end{equation}
in the case $N=2$, and 
\begin{equation}\label{eq:vectsone}
\bV_i:=\left(\begin{array}{c} 1 \\ \bv_i \\ \impulse_i\end{array}\right)\,,\qquad 
\bW_i:=\left(\begin{array}{c} 0 \\ 1  \\ \impulse_\bv^i\end{array}\right)\,,
\qquad 
\bZ_i:=\left(\begin{array}{c} 0 \\   0 \\ \impulse_{\bv\bv}^i\end{array}\right)\,
\end{equation}
in the case $N=1$. 
Likewise we introduce 
\begin{equation}\label{eq:vectsalt}
\bV^s:=\nabla\ZZ(\bv^s(\speed,\blambda);\mu_s(\speed,\blambda),\blambda,\speed)
=\left(\begin{array}{c} 1 \\ \bU^s \\ \impulse^s\end{array}\right)\,,\qquad\quad
\impulse^s:=\Impulse(\bU^s)\,,
\end{equation}
with 
$$\transp{(\bU^s)}:=(\bv^s(\speed,\blambda),g(\bv^s(\speed,\blambda);\speed,\lambda_2))\,,\quad N=2\,,\qquad\qquad
\transp{(\bU^s)}:=\bv^s(\speed,\blambda)\,,\quad N=1\,.$$
For later use, we also notice that the Hessian of $\ZZ$ is identically zero in the case $N=1$, and that in the case $N=2$,  for $i=0$ or $s$ we have
\begin{equation}\label{eq:vectT}
\nabla^2\ZZ^i=-\bT_i\otimes \bT_i\,,\qquad\quad
\bT_i:=\,
\frac{1}{\sqrt{\tau(\bv_i)}}
\left(\begin{array}{c}
0 \\
0 \\
1\\
\bv_i/b
\end{array}\right)\,.
\end{equation}
For consistency's sake, we set $\bT_i=0$, $i=0$ or $s$, when $N=1$.
\end{notation}
For any two vectors $\bV$ and $\bW$ in $\R^d$ thought of as column vectors the notation $\bV\otimes \bW$ stands for the rank-one, square matrix of  size $d$
$$\bV\otimes \bW\,=\,\bV \,\transp{\bW}$$
whatever $d$. Note that its range is spanned by $\bV$ if $\bW$ 
is nonzero - otherwise the matrix is null.

We detail the derivation of \eqref{eq:vectT} in the case $N=2$. Since 
$$\nabla\ZZ\,=\,\left(\begin{array}{c}
1 \\
\bv \\
g(\bv;\speed,\lambda_2)\\
\impulse(\bv;\speed,\lambda_2)
\end{array}\right)
$$
we have
$$\nabla^2\ZZ^i\,=\,\left(\begin{array}{cccc}
0 & 0 & 0 & 0 \\
0 & 0 & 0 & 0 \\
0 & 0 & g_{\lambda_2}^i & g_{\speed}^i\\
0 & 0 & \impulse_{\lambda_2}^i & \impulse_{\speed}^i
\end{array}\right)\,,
$$
and by definition of $g$ and $\impulse$ we see that
$$g_{\lambda_2}^i\,=\,-\frac{1}{\tau(\bv_i)}\,,\qquad\quad
g_\speed^i = \frac{\bv_i\,g_{\lambda_2}^i}{b}\,=\,\impulse_{\lambda_2}^i\,,\qquad\quad
\impulse_\speed^i\,=\,\frac{(\bv_i)^2\,g_{\lambda_2}^i}{b^2}\,,$$
with 
$$\tau(\bv_i)\,=\,\partial_\bu^2\Ec(\bU_i)\,,\qquad\quad
\frac{\bv_i}{b}=\partial_\bu\Impulse(\bU_i)\,.$$

\begin{notation}\label{not:abcp} Coefficients of expansions are to be expressed in terms of
\begin{equation}\label{eq:abcps}
\left\{\begin{array}{l}\frak{a}_s:= \sqrt{-{2}/{\partial_v^2\Potential(\bv_s;\speed;\blambda)}}\,,\qquad\quad
\frak{b}_s:=\,\frac13\,\partial_v^3\Potential(\bv_s;\speed;\blambda)/(\partial_v^2\Potential(\bv_s;\speed;\blambda))^2\,,\\ [10pt]
\frak{c}_s:= \frac{1}{6\sqrt{2}} \left(\frac53 (\partial_v^3\Potential(\bv_s;\speed;\blambda))^2-\partial_v^2\Potential(\bv_s;\speed;\blambda)\partial_v^4\Potential(\bv_s;\speed;\blambda)\right)/(-\partial_v^2\Potential(\bv_s;\speed;\blambda))^{7/2} ,\\ [10pt]
\frak{p}_s:=1/{\partial_v\Potential(\bv^s;\speed;\blambda)}\,,
\end{array}\right.
\end{equation}
\begin{equation}\label{eq:abcp0}
\left\{\begin{array}{l}\frak{a}_0:= \sqrt{{2}/{\partial_v^2\Potential(\bv_0;\speed;\blambda)}}\,,\qquad\quad
\frak{b}_0:=-\,\frac13\,\partial_v^3\Potential(\bv_0;\speed;\blambda)/(\partial_v^2\Potential(\bv_0;\speed;\blambda))^2\,,\\ [10pt]
\frak{c}_0:= \frac{1}{6\sqrt{2}}\,\left(\frac53 (\partial_v^3\Potential(\bv_0;\speed;\blambda))^2-\partial_v^2\Potential(\bv_0;\speed;\blambda)\partial_v^4\Potential(\bv_0;\speed;\blambda)\right)/(\partial_v^2\Potential(\bv_0;\speed;\blambda))^{7/2}\,.
\end{array}\right.
\end{equation}
\end{notation}
We stress differences in signs above and warn the reader that despite their similarities coefficients in \eqref{eq:abcps} and \eqref{eq:abcp0} are not obtained one from the other by exchanging the subscripts $0$ and $s$.

\begin{proposition}\label{prop:asvgradv}  
With notational conventions \ref{not:derivatives}--\ref{not:UVWZ}--\ref{not:abcp}, under Assumption \ref{as:main} we have

$\bullet$ In the small amplitude limit $\bmu \stackrel{\Omega}{\to} \bmu_{0*}$
\begin{equation}\label{eq:asharmv} 
\left\{\begin{array}{l}
\bv_2\,=\,\bv_0-\,\frak{a}_0\,\sqrt{\mu-\mu_0}\,+\, \frak{b}_0\,(\mu-\mu_0)\,-\,\frak{c}_0 \,(\mu-\mu_0)^{3/2}\,+\,{\mathcal O}(\mu-\mu_0)^2\,,\\
\bv_3\,=\,\bv_0+\frak{a}_0\,\sqrt{\mu-\mu_0}\,+\, \frak{b}_0\,(\mu-\mu_0)\,+\,\frak{c}_0 \,(\mu-\mu_0)^{3/2}\,+\,{\mathcal O}(\mu-\mu_0)^2\,,
\end{array}\right.
\end{equation}
\begin{equation}\label{eq:asharmgradv}
\left\{\begin{array}{lll}\displaystyle
\nabla\bv_2=-\frac{\frak{a}_0}{2\sqrt{\mu-\mu_0}}\,\bV_0\,&\displaystyle+\,\frak{b}_0\,\bV_0\,+\,\frac{\frak{a}_0^2}{2}\,\bW_0\,& \\  [10pt]
 & \displaystyle -\, \Big(\frac32 \frak{c}_0\, \bV_0 +\frac32\frak{a}_0\frak{b}_0\, \bW_0 + \frac{\frak{a}_0^3}{4}\, \bZ_0\Big) \,\sqrt{\mu-\mu_0}\,& +\,{\mathcal O}(\mu-\mu_0)\,,\\
\displaystyle\nabla\bv_3=\,\quad\frac{\frak{a}_0}{2\sqrt{\mu-\mu_0}}\,\bV_0\,&\displaystyle+\,\frak{b}_0\,\bV_0\,+\,\frac{\frak{a}_0^2}{2}\,\bW_0\,& \\  [10pt]
 & \displaystyle +\, \Big(\frac32 \frak{c}_0\, \bV_0 +\frac32\frak{a}_0\frak{b}_0\, \bW_0 + \frac{\frak{a}_0^3}{4}\, \bZ_0\Big) \,\sqrt{\mu-\mu_0}\,& +\,{\mathcal O}(\mu-\mu_0)\,.\\ [15pt]
\end{array}
\right.
\end{equation}
$\bullet$ In the soliton limit $\bmu \stackrel{\Omega}{\to} \bmu_{s*}$
\begin{equation}\label{eq:assolv}
\left\{\begin{array}{lll}\bv_1=\bv_s\,-\,\frak{a}_s\,\sqrt{\mu_s-\mu}\,&+\, \frak{b}_s\,(\mu_s-\mu)\,-\,\frak{c}_s \,(\mu_s-\mu)^{3/2}\,&+\,{\mathcal O}(\mu_s-\mu)^2\,,\\[5pt]
\bv_2=\bv_s\,+\,\frak{a}_s\,\sqrt{\mu_s-\mu}\,&+\, \frak{b}_s\,(\mu_s-\mu)\,+\,\frak{c}_s \,(\mu_s-\mu)^{3/2}\,&+\,{\mathcal O}(\mu_s-\mu)^2\,,\\[5pt]
\bv_3=\bv^s\,&-\, \frak{p}_s\,(\mu_s-\mu)\,&+\,{\mathcal O}(\mu_s-\mu)^2\,,
\end{array}\right.
\end{equation}
\begin{equation}\label{eq:assolgradv}
\left\{\begin{array}{lll}\displaystyle
\nabla\bv_1=\,\frac{\frak{a}_s}{2\sqrt{\mu_s-\mu}}\,\bV_s\,&\displaystyle-\,\frak{b}_s\,\bV_s\,-\,\frac{\frak{a}_s^2}{2}\,\bW_s\,& \\  [10pt]
 & \displaystyle +\, \Big(\frac32 \frak{c}_s\, \bV_s +\frac32\frak{a}_s\frak{b}_s\, \bW_s + \frac{\frak{a}_s^3}{4}\, \bZ_s\Big) \,\sqrt{\mu_s-\mu}\,& +\,{\mathcal O}(\mu_s-\mu)\,,\\ [15pt]
\displaystyle
\nabla\bv_2=-\frac{\frak{a}_s}{2\sqrt{\mu_s-\mu}}\,\bV_s\,&\displaystyle-\,\frak{b}_s\,\bV_s\,-\,\frac{\frak{a}_s^2}{2}\,\bW_s\,& \\  [10pt]
 & \displaystyle -\, \Big(\frac32 \frak{c}_s\, \bV_s +\frac32\frak{a}_s\frak{b}_s\, \bW_s + \frac{\frak{a}_s^3}{4}\, \bZ_s\Big) \,\sqrt{\mu_s-\mu}\,& +\,{\mathcal O}(\mu_s-\mu)\,,\\ [15pt]
\displaystyle
\nabla\bv_3=\,&\displaystyle\,\frak{p}_s\,\bV^s\,&+\,{\mathcal O}(\mu_s-\mu)\,,
\end{array}
\right.
\end{equation}
\end{proposition}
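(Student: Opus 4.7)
The plan is a graded Taylor expansion of the profile equation $\mu - \Potential(\bv;\speed,\blambda) = 0$, splitting the three roots according to whether the expansion point is non-degenerate ($\partial_\bv\Potential \neq 0$) or degenerate ($\partial_\bv\Potential = 0$). For $\bv_3$ in the soliton limit, the limit point $\bv^s$ is non-degenerate by Assumption~\ref{as:main}, so the implicit function theorem applied to $\mu = \Potential(\bv;\speed,\blambda)$ directly yields
$$\bv_3 = \bv^s + \frac{\mu-\mu_s}{\partial_\bv\Potential(\bv^s;\speed,\blambda)} + \mathcal{O}((\mu_s-\mu)^2),$$
and then $\nabla\bv_3 = -\nabla\ZZ(\bv_3;\bmu)/\ZZ_\bv(\bv_3;\bmu) = \frak{p}_s \bV^s + \mathcal{O}(\mu_s-\mu)$ by direct evaluation at $\bv^s$, matching the last lines of \eqref{eq:assolv}--\eqref{eq:assolgradv}.

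For the remaining four expansions the expansion point is either $\bv_0$ (harmonic case) or $\bv_s$ (soliton case) and degenerate. Writing $\xi = \bv - \bv_{0/s}$ and $\epsilon = \sqrt{\mu - \mu_0}$ (resp.\ $\sqrt{\mu_s - \mu}$), and using $\partial_\bv\Potential = 0$ at that point, the profile equation becomes
$$\pm \epsilon^2 = \tfrac{1}{2}\Potential_{\bv\bv}^{0/s}\xi^2 + \tfrac{1}{6}\Potential_{\bv\bv\bv}^{0/s}\xi^3 + \tfrac{1}{24}\Potential_{\bv\bv\bv\bv}^{0/s}\xi^4 + \mathcal{O}(\xi^5),$$
with the sign $+$ in the harmonic case ($\Potential_{\bv\bv}^{0}>0$) and $-$ in the soliton case ($\Potential_{\bv\bv}^{s}<0$). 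I would then seek $\xi = A_1\epsilon + A_2\epsilon^2 + A_3\epsilon^3 + \mathcal{O}(\epsilon^4)$ and match coefficients order by order: the $\epsilon^2$ identity fixes $A_1 = \pm \frak{a}_{0/s}$ (the two branches $\bv_2$ and $\bv_3$, resp.\ $\bv_1$ and $\bv_2$); the $\epsilon^3$ identity is linear in $A_2$ with a source quadratic in $A_1$, hence produces a \emph{branch-independent} value $A_2 = \frak{b}_{0/s}$; and the $\epsilon^4$ identity is linear in $A_3$ and gives $A_3 = \pm\frak{c}_{0/s}$. This yields \eqref{eq:asharmv} and the first two lines of \eqref{eq:assolv}.

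For the gradients at degenerate roots, implicit differentiation of $\ZZ(\bv_k(\bmu);\bmu)=0$ gives $\nabla \bv_k = -\nabla\ZZ(\bv_k;\bmu)/\ZZ_\bv(\bv_k;\bmu)$, with $\ZZ_\bv = -\partial_\bv\Potential$. Substituting the series just obtained into the Taylor expansions of $\nabla\ZZ(\cdot;\bmu)$ and of $\partial_\bv\Potential(\cdot;\speed,\blambda)$ about $\bv_{0/s}$ — noting crucially that $\nabla\ZZ$ is independent of $\mu$ in view of \eqref{eq:derZNtwo}--\eqref{eq:derZNone}, so its expansion in $\bv$ alone suffices — produces the regular power series
$$\nabla\ZZ(\bv_k;\bmu) = \bV_{0/s} + (\bv_k-\bv_{0/s})\bW_{0/s} + \tfrac{1}{2}(\bv_k-\bv_{0/s})^2 \bZ_{0/s} + \mathcal{O}(\epsilon^3)$$
for the numerator and a series $\ZZ_\bv(\bv_k;\bmu) = \mp \Potential_{\bv\bv}^{0/s}\frak{a}_{0/s}\epsilon + \mathcal{O}(\epsilon^2)$ for the denominator. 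Expanding $1/\ZZ_\bv$ as a geometric series to order $\epsilon^2$, multiplying by the numerator, and regrouping in powers of $\epsilon$ yields a Laurent series starting at $\epsilon^{-1}$ with coefficient $\pm \frak{a}_{0/s}\bV_{0/s}/2$ (using $\frak{a}_{0/s}^2 \Potential_{\bv\bv}^{0/s} = \pm 2$) and further coefficients reproducing \eqref{eq:asharmgradv} and \eqref{eq:assolgradv}.

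The main obstacle is purely combinatorial: to recover the $\mathcal{O}(\sqrt{\mu_s-\mu})$ correction involving $\bZ_s$ in \eqref{eq:assolgradv}, one must carry $\bv_k-\bv_{0/s}$ through order $\epsilon^3$, $\partial_\bv\Potential$ through $\xi^3$, and $\nabla\ZZ$ through $\xi^2$, and then track three contributions of comparable size when dividing. The algebraic identities built into the definitions \eqref{eq:abcps}--\eqref{eq:abcp0} (relating $\frak{a},\frak{b},\frak{c}$ to $\Potential_{\bv\bv},\Potential_{\bv\bv\bv},\Potential_{\bv\bv\bv\bv}$) are precisely what make those contributions collapse into the clean form of the proposition; the branch symmetry $\epsilon \leftrightarrow -\epsilon$ provides a useful consistency check (even powers of $\epsilon$ in $\xi$ are common to the two branches, odd powers carry opposite signs).
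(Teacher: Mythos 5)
Your overall route is the same as the paper's: the simple root $\bv_3$ is handled by the implicit function theorem at $\bv^s$, the double-root expansions are sought in powers of $\epsilon=\sqrt{|\mu-\mu_{0/s}|}$, and the gradients come from implicit differentiation, $\nabla\bv_k=-\nabla\ZZ(\bv_k;\bmu)/\ZZ_\bv(\bv_k;\bmu)$, combined with Taylor expansion about $\bv_0$, $\bv_s$, $\bv^s$. Your variant of expanding numerator and denominator separately (rather than importing the expansion of $\partial_\mu\bv_k$) is legitimate and even spares you any term-by-term differentiation of the root expansions; your sign conventions, branch assignments, and the orders to which each ingredient must be carried are all consistent with Notation~\ref{not:abcp} and with \eqref{eq:asharmgradv}--\eqref{eq:assolgradv}.

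The one substantive gap is at the degenerate roots: ``seek $\xi=A_1\epsilon+A_2\epsilon^2+A_3\epsilon^3+{\mathcal O}(\epsilon^4)$ and match'' determines the coefficients only if you already know that such an expansion, with that remainder, exists; smoothness of $\bv_k$ on $\Omega$ gives nothing at the boundary values $\mu=\mu_0$ (resp.\ $\mu=\mu_s$), where the roots have square-root behavior in $\mu$. The paper closes exactly this point with Lemma~\ref{lem:as}: writing $W(z)=z^2\int_0^1 W''(tz)(1-t)\,\dif t$, the function $w(z)=z\sqrt{W(z)/z^2}$ is smooth with $w'(0)=\sqrt{W''(0)/2}\neq0$, so the implicit function theorem yields $\bv_{2,3}-\bv_0=\varphi(\pm\sqrt{\mu-\mu_0})$ (and the analogue at $\bv_s$ with $\mu_s-\mu$) for a smooth $\varphi$ vanishing at $0$; only then does coefficient matching --- which is precisely how the paper computes the Taylor coefficients of $\varphi$ --- legitimately produce \eqref{eq:asharmv} and the first two lines of \eqref{eq:assolv} with the stated ${\mathcal O}$ bounds. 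Add this factorization/IFT step (or an equivalent iterative bootstrap starting from the leading balance $\xi=\pm\frak{a}_{0/s}\,\epsilon\,(1+o(1))$) and your argument is complete; the rest is the bookkeeping you already identified.
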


\begin{proof}[Proof of Proposition~\ref{prop:asvgradv}]
The asymptotic expansions of the zeros of $\ZZ(\cdot;\mu,\blambda,\speed)=\mu-\Potential(\cdot;\speed;\blambda)$ follow from Lemma~\ref{lem:as} in Appendix \ref{app:elas}.
Applied to $$W(z)=\Potential(z+\bv_0(\speed,\blambda);\speed;\blambda)-\Potential(\bv_0(\speed,\blambda);\speed;\blambda)$$ and $\varepsilon=\mu-\mu_0(\speed,\blambda)$, this lemma gives 
\eqref{eq:asharmv} and 
$$
\begin{array}{lllllllll}\displaystyle
\partial_\mu\bv_{2}&=&\displaystyle
-\,\frac{\frak{a}_0}{2\sqrt{\mu-\mu_0}}
&+&\displaystyle\frak{b}_0
&-&\displaystyle
\frac32 \frak{c}_0 \,\sqrt{\mu-\mu_0}
&+&\displaystyle
{\mathcal O}(\mu-\mu_0)\,,\\\displaystyle
\partial_\mu\bv_{3}&=&\displaystyle
\quad\frac{\frak{a}_0}{2\sqrt{\mu-\mu_0}}
&+&\displaystyle\frak{b}_0
&+&\displaystyle
\frac32 \frak{c}_0 \,\sqrt{\mu-\mu_0}
&+&\displaystyle
{\mathcal O}(\mu-\mu_0)\,,
\end{array}
$$
where coefficients $\frak{a}_0$, $\frak{b}_0$, and $\frak{c}_0$ are precisely as defined in Notation \ref{not:abcp}. %
Similarly, Lemma~\ref{lem:as}$)$ applied to 
$$W(z)=\Potential(\bv_s(\speed,\blambda);\speed;\blambda)-\Potential(z+\bv_s(\speed,\blambda);\speed;\blambda)$$ and $\varepsilon=\mu_s(\speed,\blambda)-\mu$,  gives 
the first two rows in \eqref{eq:assolv} and
$$
\begin{array}{lllllllll}\displaystyle
\partial_\mu\bv_{1}&=&\displaystyle
\quad\frac{\frak{a}_s}{2\sqrt{\mu_s-\mu}}
&-&\displaystyle\frak{b}_s
&+&\displaystyle
\frac32 \frak{c}_s \,\sqrt{\mu_s-\mu}
&+&\displaystyle{\mathcal O}(\mu_s-\mu)\,,\\\displaystyle
\partial_\mu\bv_{2}
&=&\displaystyle
-\frac{\frak{a}_s}{2\sqrt{\mu_s-\mu}}
&-&\displaystyle\frak{b}_s
&-&\displaystyle
\frac32 \frak{c}_s \,\sqrt{\mu_s-\mu}
&+&{\mathcal O}(\mu_s-\mu)\,.
\end{array}
$$

The third row in  \eqref{eq:assolv} merely follows from the fact that $0$ is a simple root of $$\Potential(\bv_s(\speed,\blambda);\speed;\blambda)-\Potential(z+\bv_s(\speed,\blambda);\speed;\blambda)-\mu_s(\speed,\blambda)+\mu\,.$$
By differentiating with respect to $\mu$ the identity $\Potential(\bv_3(\mu,\blambda, \speed);\speed;\blambda)=\mu$ we also readily find that 
$$\partial_\mu\bv_3\,=\,
\frak{p}_s\,+\,{\mathcal O}(\mu_s-\mu)\,.$$

The asymptotic expansions of other derivatives  of the $v_i$'s can be inferred from expansions of $\partial_\mu\bv_{i}$ by differentiating the identity
$\ZZ(\bv_i;\mu,\blambda,\speed)=0$. This gives
$$\nabla\bv_i\,=\,-\,\nabla\ZZ^i/\ZZ_\bv^i\,,\qquad\quad\mbox{in particular }\quad\bv_{i,\mu}\,=\,-\,1/\ZZ_\bv^i\,,$$
so that 
by Taylor-expanding these derivatives about $\bv_0$, $\bv_s$ and $\bv^s$ as
$$
\begin{array}{lllllllll}
\nabla\ZZ&=&\bV^s&+&{\mathcal O}(\bv-\bv^s)\,,&&&&\\
\nabla\ZZ&=&\bV_l&+&(\bv-\bv_l)\,\bW_l&+&\frac12 \,(\bv-\bv_l)^2\,\bZ_l&+&{\mathcal O}(\bv-\bv_l)^3\,,
\end{array}$$
for $l=0$ or $s$, and  collecting all the terms we eventually obtain \eqref{eq:asharmgradv} and \eqref{eq:assolgradv}.
\end{proof}

\subsubsection{Asymptotics of action and period}\label{ss:actionperiod}
We concentrate now on the asymptotics of the action $\Action$ and of the spatial period $\Xi$, as a warm-up for the expansion of the other derivatives of the action, which will be derived in \S~\ref{ss:ashessaction}.

\begin{proposition} \label{prop:asactionperiod} 
With notational conventions \ref{not:UVWZ}--\ref{not:abcp}, under Assumption \ref{as:main} we have

$\bullet$ In the small amplitude limit $\bmu \stackrel{\Omega}{\to} \bmu_{0*}$
$$\begin{array}{ll}\Xi\,&=\, 
\Xi_0\,\,+\,{\mathcal O}(\mu-\mu_0)\,,\qquad\quad
\Xi_0:=\pi \,\frak{a}_0\,\sqrt{2\cap(\bv_0)}\,,\\ [10pt]
\Action \,& =\,\Xi_0\;(\mu-\mu_0)\,+\,{\mathcal O}(\mu-\mu_0)^2\,. \end{array}$$

$\bullet$ In the soliton limit $\bmu \stackrel{\Omega}{\to} \bmu_{s*}$
$$\begin{array}{ll}
\Xi&=\,-\,\frak{a}_s\,\sqrt{\cap(\bv_s)/2}\,
\,\ln(\mu_s-\mu)\,+\,{\mathcal O}(1)\\ [10pt]
\Action&=\,\Mom
\,+\,\frak{a}_s\,\sqrt{\cap(\bv_s)/2}\,(\mu_s-\mu)\,\ln(\mu_s-\mu)\,+\,{\mathcal O}(\mu_s-\mu)
\end{array}
$$
with
$$ \Mom
= 2\,\int_{\bv_s}^{\bv^s} {\sqrt{2\cap(\bv)\,\ZZ(\bv;\mu_s,\blambda,\speed)}}\,\dif \bv\,.$$
\end{proposition}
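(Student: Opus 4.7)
The plan is to handle each regime by combining the integral representations
$$\Xi\,=\,2\int_{\bv_2}^{\bv_3}\sqrt{\kappa(\bv)/(2\,\ZZ(\bv;\mu,\blambda,\speed))}\,\dif\bv\,,\qquad \Action\,=\,2\int_{\bv_2}^{\bv_3}\sqrt{2\kappa(\bv)\,\ZZ(\bv;\mu,\blambda,\speed)}\,\dif\bv$$
with the $\Redpot$-factorizations of $\ZZ$ recalled in \S\ref{s:asbasic} and the expansions of the roots from Proposition~\ref{prop:asvgradv}. In both regimes, an appropriate change of variables absorbs the endpoint singularities of the integrand, leaving a parameter dependence that is smooth and may be Taylor-expanded under the integral sign.

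For the harmonic limit I would use $\bv=\V(\theta;\bv_2,\bv_3)$ from \eqref{eq:V} together with $\ZZ=(\bv_3-\bv)(\bv-\bv_2)\Redpot(\bv,\bv_2,\bv_3;\speed,\blambda)$ to rewrite
$$\Xi=\int_{-\pi/2}^{\pi/2}\sqrt{2\kappa(\V)/\Redpot(\V,\bv_2,\bv_3;\speed,\blambda)}\,\dif\theta\,,\qquad \Action=(\bv_3-\bv_2)^2\int_{-\pi/2}^{\pi/2}\sqrt{\kappa(\V)\,\Redpot(\V,\bv_2,\bv_3;\speed,\blambda)/2}\,\cos^2\theta\,\dif\theta\,.$$
As $\bmu\to\bmu_{0*}$ one has $\V\to\bv_0$ uniformly in $\theta$ and $\Redpot(\V,\bv_2,\bv_3;\speed,\blambda)\to\tfrac12\partial_\bv^2\Potential(\bv_0;\speed,\blambda)=\frak{a}_0^{-2}$; the first integral converges to $\Xi_0=\pi\frak{a}_0\sqrt{2\kappa(\bv_0)}$. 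Inserting $(\bv_3-\bv_2)^2=4\frak{a}_0^2(\mu-\mu_0)+\cO((\mu-\mu_0)^2)$ from \eqref{eq:asharmv} and $\int_{-\pi/2}^{\pi/2}\cos^2\theta\,\dif\theta=\pi/2$ then yields $\Action=\Xi_0(\mu-\mu_0)+\cO((\mu-\mu_0)^2)$; the $\cO$-remainders follow from the smoothness of $\Redpot$ and $\kappa\circ\V$ in the parameters.

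For the soliton limit I would switch to the factorization $\ZZ=-(\bv-\bv_1)(\bv-\bv_2)\Redpot(\bv,\bv_1,\bv_2;\speed,\blambda)$ and the parametrization $\bv=\bv_2+\sigma(\bv_3-\bv_2)$, giving $\Xi=\int_0^1\psi(\sigma;\mu,\blambda,\speed)\,\dif\sigma/\sqrt{\sigma(\sigma+\rho)}$ where $\psi:=\sqrt{2\kappa(\V)/(-\Redpot(\V,\bv_1,\bv_2;\speed,\blambda))}$ and $\rho=(\bv_2-\bv_1)/(\bv_3-\bv_2)\to 0$. The elementary identity $\int_0^1\dif\sigma/\sqrt{\sigma(\sigma+\rho)}=2\ln(1+\sqrt{1+\rho})-\ln\rho$ singles out a $-\ln\rho$ divergence; splitting $\psi=\psi(0)+[\psi-\psi(0)]$ and using $|\psi(\sigma)-\psi(0)|=\cO(\sigma)$ together with $\sqrt{\sigma}/\sqrt{\sigma+\rho}\le 1$ makes the correction integral $\cO(1)$. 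Recognizing $\psi(0)\to\frak{a}_s\sqrt{2\kappa(\bv_s)}$ (from $\Redpot(\bv_s,\bv_s,\bv_s)=-\frak{a}_s^{-2}$) and computing $\ln\rho=\tfrac12\ln(\mu_s-\mu)+\cO(1)$ from \eqref{eq:assolv} yields the announced asymptotics for $\Xi$. For $\Action$, rather than working out the $\sigma$-integral directly, I would integrate the exact relation $\partial_\mu\Action=\Xi$ from a fixed $\mu_1\in(\mu_0,\mu_s)$, using the antiderivative $\int(-\ln(\mu_s-\mu'))\dif\mu'=(\mu_s-\mu)\ln(\mu_s-\mu)-(\mu_s-\mu)+\mathrm{const}$ to transfer the logarithmic coefficient $\frak{a}_s\sqrt{\kappa(\bv_s)/2}$ into $\Action$, while the $\cO(1)$ remainder in $\Xi$ integrates to a function continuous up to $\mu_s$. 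To identify the resulting constant with $\Mom$, I would pass to the limit $\mu\to\mu_s$ in $\Action=2\int_{\bv_2}^{\bv_3}\sqrt{2\kappa\,\ZZ}\,\dif\bv$ by dominated convergence: $\sqrt{\ZZ(\cdot;\mu_s,\blambda,\speed)}$ has an integrable simple zero at $\bv^s$ and a Lipschitz-type zero at $\bv_s$, so $\Action\to 2\int_{\bv_s}^{\bv^s}\sqrt{2\kappa\,\ZZ(\cdot;\mu_s,\blambda,\speed)}\,\dif\bv=\Mom$.

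The main subtlety is to ensure that the $\cO(1)$ remainder in the expansion of $\Xi$ is \emph{locally uniform} in $\mu'\in[\mu_1,\mu_s]$, so that integrating it in $\mu'$ produces a remainder of size $\cO(\mu_s-\mu)$ in $\Action$ rather than something worse. This uniformity rests only on the joint smoothness of $\Redpot$, $\kappa\circ\V$ and of the roots $\bv_1,\bv_2,\bv_3,\bv^s$ as functions of $(\mu,\blambda,\speed)$, together with the parameter-free estimate $\sqrt{\sigma}/\sqrt{\sigma+\rho}\le 1$; none of these bounds degenerates as $\rho\to 0^+$, so the argument closes.
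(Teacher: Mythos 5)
Your harmonic-limit argument does not, as written, establish the remainder orders claimed in the statement. Since $\bv_2-\bv_0$, $\bv_3-\bv_0$ and $\V-\bv_0$ are all of size $\sqrt{\mu-\mu_0}$ (and $\bv_2,\bv_3$ are \emph{not} smooth in $\mu$ at $\mu_0$), "smoothness of $\Redpot$ and $\kappa\circ\V$ in the parameters" only yields $\Xi=\Xi_0+\cO(\sqrt{\mu-\mu_0})$ and $\Action=\Xi_0\,(\mu-\mu_0)+\cO((\mu-\mu_0)^{3/2})$. To reach $\cO(\mu-\mu_0)$ and $\cO((\mu-\mu_0)^2)$ you must check that the order-$\sqrt{\mu-\mu_0}$ term of the integrand disappears upon integration: its part coming from the $\bv$-slot is proportional to $\sin\theta$, which integrates to zero on $(-\pi/2,\pi/2)$ (also against $\cos^2\theta$), while its part coming from the $\bw$- and $\bz$-slots carries the factor $\frak{a}_0\sqrt{\mu-\mu_0}\,(\partial_\bz-\partial_\bw)$ and cancels because $\Redpot$ is symmetric in its last two arguments (Lemma~\ref{lem:symmetry}). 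This parity-plus-symmetry cancellation is exactly the step the paper makes explicit; without it the claimed remainders are not proved, even though your leading constants are correct.

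In the soliton limit your estimate $|\psi(\sigma)-\psi(0)|=\cO(\sigma)$ is false near $\sigma=1$: there $\V=\bv_3$, a root of $\ZZ$, so $\Redpot(\bv_3,\bv_1,\bv_2;\speed,\blambda)=0$ and $\psi$ blows up like $(1-\sigma)^{-1/2}$ — this is precisely why the paper desingularizes with $\Yc$. The fix is to write $\psi(\sigma)=\Yc/\sqrt{1-\sigma}$ with $\Yc$ smooth on $[0,1]$ (using $\Redpot=(\bv-\bv_3)\Redpotc$ and $\Redpotc\neq0$ there), which gives $|\psi(\sigma)-\psi(0)|\le C\,\sigma/\sqrt{1-\sigma}$ with $C$ locally uniform in $(\mu,\blambda,\speed)$; the correction integral is then still $\cO(1)$ uniformly, since $\sqrt{\sigma}/\sqrt{\sigma+\rho}\le1$ and $(1-\sigma)^{-1/2}$ is integrable, and the same patch restores the uniformity you invoke in your last paragraph. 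With that repair, your treatment of $\Action$ — integrating $\partial_\mu\Action=\Xi$ from a fixed $\mu_1$, and identifying the limiting constant with $\Mom$ by dominated convergence — is a legitimate and genuinely different route from the paper's: the paper expands $\Action-\Mom$ directly in the $\sigma$-variable and must observe the total-derivative cancellation $\int_0^1\partial_\sigma\bigl(\sigma(1-\sigma)\,\Ss\bigr)\,\dif\sigma=0$ to kill a potential $\sqrt{\mu_s-\mu}$ term, whereas in your approach the a priori logarithmic bound on $\partial_\mu\Action$ rules out such a term automatically; what the paper's heavier direct expansion buys is the notation and intermediate quantities reused later for the three-step expansion of $\nabla^2\Action$.
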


\begin{remark}\label{rk:def-Mom}
As in Section \ref{ss:waveparam},  
$\Mom=\Mom(\speed,\bU_s)$ stands for the Boussinesq moment of instability associated with the solitary wave 
of speed $\speed$ and endstate $\bU_s$. In terms of the profile $\ubU^s$ of this solitary wave, 
it equivalently reads
$$\Mom(\speed,\bU_s)=\int_{-\infty}^{+\infty} (\Ham[\ubU^s]+\speed \Impulse(\ubU^s) + \blambda \cdot \ubU^s +\mu_s)\,\dif \xi\,,$$
where 
$$\blambda = - \nabla_{\bU}(\Ham+\speed\Impulse)(\bU_s,0)\,,\qquad\quad
\mu_s = - \blambda\cdot \bU_s\,-\,(\Ham+\speed\Impulse)(\bU_s,0)\,,$$
that is,
$$\Mom(\speed,\bU_s)=
\int_{-\infty}^{+\infty} \big((\Ham+\speed \Impulse)(\ubU^s,0) - (\Ham+\speed \Impulse)(\bU_s,0) - \nabla_{\bU}(\Ham+\speed\Impulse)(\bU_s,0)\cdot (\ubU^s-\bU_s)\big)\,\dif \xi\,.$$
\end{remark}

\begin{remark}
The factor 
$$\frak{a}_s\,\sqrt{\cap(\bv_s)/2}\,=\,\sqrt{\frac{-\cap(\bv_s)}{\partial_\bv^2\Potential(\bv_s;\speed,\blambda)}}$$
 in the leading order term in the expansion of $\Xi$ in the soliton limit is, up to a factor $2\pi$, 
the small amplitude limit of the period of waves 
$\widetilde{\Xi}$ associated with the \emph{opposite capillarity coefficient} $-\kappa$, and whose profile oscillates between $\bv_1$ and $\bv_2$. This symmetry has led some authors to analyze the soliton limit in terms of the small parameter $2\pi/\widetilde{\Xi}$, that has been coined as the \emph{conjugate wave number}. See for instance \cite[Formula~(54)]{El1} and the surrounding discussion. This is particularly convenient for leading-order asymptotics but becomes akward as soon as expansions starts involving $\bv^s$.
\end{remark}

\begin{proof}[Proof of Proposition~\ref{prop:asactionperiod}]
Let us deal with small amplitude asymptotics first. We first infer from expansions of $\bv_2$ and $\bv_3$ in \eqref{eq:asharmv} that the function 
$$\V=\V(\theta;\bv_2,\bv_3)=\bv_2+(1+\sin\theta) (\bv_3-\bv_2)/2$$ 
defined in \eqref{eq:V} expands as
$$\V= \bv_0\,+\,
(\sin\theta)\; \frak{a}_0\,\sqrt{\mu-\mu_0}\,+\,\frak{b}_0\,(\mu-\mu_0) \,+\,(\sin\theta)\;\frak{c}_0 \,(\mu-\mu_0)^{3/2}\,+\,{\mathcal O}(\mu-\mu_0)^2\,.
$$
Together with \eqref{eq:asharmv} , this enables us to expand 
$$\begin{array}[l]{l}\sqrt{\cap(\V(\theta;\bv_2,\bv_3))\Redpot(\V(\theta;\bv_2,\bv_3),\bv_2,\bv_3;\speed,\blambda)}\\ [10pt]
\ \ =
(\sqrt{\cap\Redpot})^0
+(\sqrt{\cap\Redpot})_\bv^0\,\frak{a}_0\,\sqrt{\mu-\mu_0}\,\sin\theta
+((\sqrt{\cap\Redpot})_\bz^0-(\sqrt{\cap\Redpot})_\bw^0)\,\frak{a}_0\,\sqrt{\mu-\mu_0}+{\mathcal O}(\mu-\mu_0)
\end{array}
$$
where the superscript $0$ stands for evaluation at $\bv=\bv_0$, $\bw=\bv_0$, $\bz=\bv_0$
by extension of the convention adopted in Notation \ref{not:superscripts}.
 
Since $\Redpot$ - as defined in \eqref{eq:redpot} - is symmetric in $\bw$ and $\bz$, $\sqrt{\cap \Redpot}$ inherits this symmetry and therefore 
$$(\sqrt{\cap\Redpot})_\bz^0=(\sqrt{\cap\Redpot})_\bw^0\,.$$
Incidentally, note that $\Redpot$ is actually a symmetric function of its first three arguments, see Lemma~\ref{lem:symmetry} in Appendix~\ref{app:alg}. Furthermore, since $\int_{-\pi/2}^{\pi/2} \sin\theta \cos^2\theta\;\dif\theta=0$ the other ${\mathcal O}(\sqrt{\mu-\mu_0})$ term cancels out upon integration of $\sqrt{\cap\Redpot}$ over $(-\pi/2,\pi/2)$. Noticing in addition that
$\int_{-\pi/2}^{\pi/2} \cos^2\theta\;\dif\theta=\pi/2$ and 
$$
\Redpot(\bv_0,\bv_0,\bv_0;\speed,\blambda)=\partial_\bv^2\Potential(\bv_0;\speed,\blambda)/2\,,$$ 
we thus obtain the simple expansion
$$
\begin{array}{l}\displaystyle
\int_{-\pi/2}^{\pi/2} {\sqrt{\cap(\V(\theta;\bv_2
,\bv_3
))\Redpot(\V(\theta;\bv_2
,\bv_3
),\bv_2
,\bv_3
;\speed,\blambda)}}\,\cos^2\theta \;\dif \theta\\\displaystyle
\qquad\qquad\qquad\qquad\,=\,\frac{\pi}{2}\,\sqrt{\cap(\bv_0)\,\partial_\bv^2\Potential(\bv_0;\speed,\blambda)/2}\,+\,{\mathcal O}(\mu-\mu_0)\,.
\end{array}$$
Hence, by combining with an expansion of $(\bv_3-\bv_2)^2$, as expected
$$\Action=2 {\pi}\,\frak{a}_0^2\;\sqrt{\cap(\bv_0)\,\partial_\bv^2\Potential(\bv_0;\speed,\blambda)/2}\;
(\mu-\mu_0)\,+\,{\mathcal O}(\mu-\mu_0)^2\,.$$
In a similar way, since $\int_{-\pi/2}^{\pi/2} \sin\theta \;\dif\theta=0$ we find that for any smooth function
$\phi$,
$$\int_{-\pi/2}^{\pi/2}\frac{\phi(\V(\theta;\bv_2
,\bv_3
))\,\dif \theta}{\sqrt{\Redpot(\V(\theta;\bv_2
,\bv_3
),\bv_2
,\bv_3
;\speed,\blambda)}}\,=\,\pi\,\frac{\phi(\bv_0)}{\sqrt{\partial_\bv^2\Potential(\bv_0;\speed,\blambda)/2}}\,+\,{\mathcal O}(\mu-\mu_0)\,.$$
In particular for $\phi(\bv)=\sqrt{2\cap(\bv)}$ this gives that
$$\Xi=\Xi_0\,+\,{\mathcal O}(\mu-\mu_0)\,,\qquad\quad
\Xi_0:=2\pi\,\sqrt{\cap(\bv_0)/\partial_\bv^2\Potential(\bv_0;\speed,\blambda)}\,=\,
\pi \, \frak{a}_0\,\sqrt{2\cap(\bv_0)}\,.$$

Let us now turn to the soliton limit. This asymptotic regime is trickier. As a warm-up and a preparation for higher-order expansions, we already introduce now, to handle it, pieces of notation that will be most relevant later.  
We recall that we can write the action integral as
$$\Action=\,2\,(\bv_3-\bv_2)^2\int_{0}^{1} {\sqrt{-2\cap(\V)\Redpot(\V,\bv_1,\bv_2;\speed,\blambda)}}\,\sqrt{\sigma\,(\sigma+\rho)} \;\dif \sigma\,,$$
and for any smooth function $\phi$,
$$\Iphi=
\int_{0}^{1}\frac{\phi(\V)}{\sqrt{
-\Redpot(\V,\bv_1,\bv_2;\speed,\blambda)
}}\,\frac{\dif \sigma}{\sqrt{\sigma(\sigma+\rho)}}\,,\qquad\quad
\rho=\frac{\bv_2-\bv_1}{\bv_3-\bv_2}\,,$$
where $\V$ is redefined here as 
$$\V(\sigma;\bv_2,\bv_3):=\bv_2+\sigma(\bv_3-\bv_2)$$
- actually, we do not change the dependent variable $\V$ itself, but we view it as a function of $\sigma=\sin \theta$ instead of $\theta$.
In particular, the period of the wave corresponds to $\phi(\bv)=\sqrt{2\kappa(\bv)}$, that is,
$$\Xi=I(\sqrt{2\kappa})=
\int_{0}^{1}\,\Y(\V,\bv_1,\bv_2;\speed,\blambda)\;\frac{\dif \sigma}{\sqrt{\sigma(\sigma+\rho)}}\,,$$
where $\Y$ is defined by
\begin{equation}\label{eq:defY}
\Y(\bv,\bw,\bz;\speed,\blambda)\,:=\,\sqrt{2 \cap(\bv)/|\Redpot(\bv,\bw,\bz;\speed,\blambda)|}\,.
\end{equation}
The absolute value in \eqref{eq:defY} is inserted here to allow us the use of the notation $\Y$ in both the soliton limit - with $\bw=\bv_1$ and $\bz=\bv_2$ - and the small amplitude limit - with $\bw=\bv_2$ and $\bz=\bv_3$. 
Recall that by construction of $\Redpot$ and Assumption \ref{as:main} we have $\Redpot(\bv,\bv_1,\bv_2;\speed,\blambda)<0$ hence
$$\Y(\bv,\bv_1,\bv_2;\speed,\blambda)\,=\,\sqrt{-2 \cap(\bv)/\Redpot(\bv,\bv_1,\bv_2;\speed,\blambda)}$$
for all $\bv\in [\bv_2,\bv_3)$. 

Since
$$\ZZ(\bv;\mu,\blambda,\speed)\,=\,\mu-\Potential(\bv;\speed,\blambda)=-(\bv- \bv_2)(\bv-\bv_1) \Redpot(\bv,\bv_1,\bv_2;\speed,\blambda)$$
for all $\bv$, observe that by definition of $\bv_3$ as another root of $\ZZ=\mu-\Potential$ we have
$$\Redpot(\bv_3,\bv_1,\bv_2;\speed,\blambda)\,=\,0\,.$$
Thus some care is still needed with this endpoint. More, at some point it will become important to also desingularize it. To do so, we proceed as follows. First, we can write
$$\Redpot(\bv,\bv_1,\bv_2;\speed,\blambda)\,=\,(\bv-\bv_3)\,\Redpotc(\bv,\bv_1,\bv_2,\bv_3;\speed,\blambda)\,,$$
with
\begin{equation}\label{eq:redpotc}
\begin{array}[t]{l}\Redpotc(\bv,\bw,\bz,\bzc;\speed,\blambda)\\ [10pt]
\displaystyle
\qquad\quad
:=\,\int_{0}^{1}\!\!\!\int_{0}^{1}\!\!\!\int_{0}^{1}t^2s\,\partial_\bv^3\Potential(\bw+t(\bz-\bw)+ts(\bzc-\bz)+tsr \,(\bv-\bzc);\speed,\blambda)\,\dif r\dif s\dif t\,,
\end{array}
\end{equation}
which gives the factorization
$$
\ZZ(\bv;\mu,\blambda,\speed)=(\bv- \bv_1)(\bv-\bv_2) (\bv_3-\bv) \Redpotc(\bv,\bv_1,\bv_2,\bv_3;\speed,\blambda)\,.$$
By Assumption \ref{as:main} again, we have 
$$\Redpotc(\bv,\bv_1,\bv_2,\bv_3;\speed,\blambda)\,\neq\,0$$
for all $\bv \in [\bv_2,\bv_3]$.
Accordingly, we define
$$\Yc(\bv,\bw,\bz,\bzc;\speed,\blambda):=\sqrt{\frac{\bzc-\bv}{\bzc-\bz}}\;\Y(\bv,w,z;\speed,\blambda)\,=\,
\frac{1}{\sqrt{\bzc-\bz}}\,\sqrt{\frac{2 \cap(\bv)}{|\Redpotc(\bv,\bw,\bz,\bzc;\speed,\blambda)|}}\,,$$
which is a smooth function of $\bv \in [\bv_2,\bv_3]$ such that
$$\Y(\bv_2+\sigma (\bv_3-\bv_2),\bv_1,\bv_2;\speed,\blambda)\,=\,\frac{1}{\sqrt{1-\sigma}}\,
\Yc(\bv_2+\sigma (\bv_3-\bv_2),\bv_1,\bv_2,\bv_3;\speed,\blambda)\,$$
for all $\sigma \in [0,1)$.
At some point we shall use the desingularized function $\Yc$ instead of $\Y$ in our asymptotic expansions.

To begin expansions in the soliton limit, we derive from \eqref{eq:assolv}
$$\rho=\frac{\bv_2-\bv_1}{\bv_3-\bv_2}\,=\,\frac{2\frak{a}_s}{\bv^s-\bv_s}\,\sqrt{\mu_s-\mu}\,+\,\frac{2\frak{a}_s^2}{(\bv^s-\bv_s)^2}\,({\mu_s-\mu})\,+\,{\mathcal O}(\mu_s-\mu)^{3/2}\,,$$
and
$$\V\,=\,\V_s\,+\,
(1-\sigma)\; \frak{a}_s\,\sqrt{\mu_s-\mu}\,
+\,((1-\sigma)\;\frak{b}_s-\sigma\;\frak{p}_s)
\,(\mu_s-\mu) \,+\,{\mathcal O}(\mu_s-\mu)^{3/2}\,,
$$
with 
$$\V_s=\V_s(\sigma):=\bv_s+\sigma (\bv^s-\bv_s)\,.$$ 
By taking the limit under the integral sign we 
infer readily 
that $\Action$ goes to
$$2\,(\bv^s-\bv_s)^2\int_{0}^{1} {\sqrt{-2\cap(\V_s(\sigma))\Redpot(\V_s(\sigma),\bv_s,\bv_s
;\speed,\blambda)}}\;\sigma \;\dif \sigma\,,$$
which 
is seen to coincide with the Boussinesq momentum $\Mom$ by the change of variable $v=\V_s(\sigma)$.

To derive the full expansion of $\Action$, observe that half 
the difference reads
$$\begin{array}[t]{l} 
\frac12(\Action-\Mom)= \displaystyle
\,(\bv^s-\bv_s)^2\,\int_{0}^{1} \Ss(\V_s,\bv_s,\bv_s;\speed,\blambda)\;({\sqrt{\sigma(\sigma+\rho)}}-\sigma) \;\dif \sigma
\\ [10pt] \displaystyle
\qquad\qquad+
\int_{0}^{1}\left((\bv_3-\bv_2)^2\,\Ss(\V,\bv_1,\bv_2;\speed,\blambda)
 -(\bv^s-\bv_s)^2\Ss(\V_s,\bv_s,\bv_s;\speed,\blambda)\right)\;{\sqrt{\sigma(\sigma+\rho)}} \;\dif \sigma
\end{array}$$
where 
$$ \Ss(v,w,z;\speed,\blambda):=\,\sqrt{2 \cap(\bv) |\Redpot(\bv,w,z;\speed,\blambda)|}\,=\,|\Redpot(\bv,w,z;\speed,\blambda)|\,
 \Y(v,w,z;\speed,\blambda)\,.$$
From the expansions here above we have
$$
\begin{array}{l}\displaystyle
(\bv_3-\bv_2)^2\,\Ss(\V,\bv_1,\bv_2;\speed,\blambda)\,-\,(\bv^s-\bv_s)^2\Ss(\V_s,\bv_s,\bv_s;\speed,\blambda)\\[10pt]
\displaystyle\,=\,
\frak{a}_s \,(\bv^s-\bv_s)\, \Big(-2 \Ss^s\,+\,(\bv^s-\bv_s)\,\big((1-\sigma) \Ss^s_{\bv}\,-\,\Ss^s_{w}\,+\,\Ss^s_{z}\big)\Big)\,\sqrt{\mu_s-\mu} \,+\,{\mathcal O}(\mu_s-\mu)\,,
\end{array}
$$
with 
 $$\Ss^s:=\Ss(\V_s,\bv_s,\bv_s;\speed,\blambda)$$
and similar notation for the partial derivatives of $\Ss$ with respect to $\bv$, $w$, and $z$. 
We point out that, by definition, $\V_s$ depends on $\sigma$, and so does $\Ss^s$ through $\V_s$. In addition, the derivatives $\Ss^s_{w}$ and $\Ss^s_{z}$ cancel out in the expansion above by symmetry of $\Redpot$ and thus also of $\Ss$ with respect to $w$ and $z$.
  
\begin{remark}\label{rem:superscripts}
The meaning of the superscript $s$ differs slightly here from Notation \ref{not:superscripts} since we take $\bv=\V_s$  and not just $\bv=\bv_s$. The case $\bv=\bv_s$ actually corresponds to $\sigma=0$ in $\V_s$ and will be signaled by an additional subscript $0$. For instance 
$$\Ss^s_0:=\Ss(\bv_s,\bv_s,\bv_s;\speed,\blambda)\,.$$
The subscript $0$ here above, standing for $\sigma=0$, should not be confused with notation related to the state $\bv_0$. As the latter state is irrelevant in the soliton limit, we hope that this will not be too confusing. We adopt the same convention for all functions of $(\bv,\bw,\bz;\speed,\blambda)$, such as $\Y$, $\Redpot$ and their derivatives.
\end{remark}
 
By Proposition~\ref{prop:asol} we thus find that 
$$\begin{array}[t]{l} \displaystyle
\int_{0}^{1}\left((\bv_3-\bv_2)^2\,\Ss(\V,\bv_1,\bv_2;\speed,\blambda)
 -(\bv^s-\bv_s)^2\Ss(\V_s,\bv_s,\bv_s;\speed,\blambda)\right)\;{\sqrt{\sigma(\sigma+\rho)}} \;\dif \sigma\\ [10pt]
\displaystyle
\qquad\quad\,=\, \frak{a}_s \,(\bv^s-\bv_s)\, \sqrt{\mu_s-\mu} \,
\int_{0}^{1}\,\Big(-2 \Ss^s\,+\,(\bv^s-\bv_s)\,(1-\sigma) \Ss^s_{\bv}\Big)\,\sigma\,\dif \sigma 
\,+\,{\mathcal O}(\mu_s-\mu)\,,
\end{array}$$
and
$$\begin{array}[t]{l}\displaystyle
\int_{0}^{1}  \Ss(\V_s,\bv_s,\bv_s;\speed,\blambda)\;({\sqrt{\sigma(\sigma+\rho)}}-\sigma) \;\dif \sigma\\[10pt]
\displaystyle
\qquad\quad
=\frac{\rho}{2}\,\int_{0}^{1}\Ss^s\,\dif \sigma \,+\,\frac{\rho^2\ln\rho}{8}\,\Ss^s_0\,+\,{\mathcal O}(\rho^2)\\[10pt] 
\displaystyle
\qquad\quad=\frac{\frak{a}_s}{\bv^s-\bv_s}\,\sqrt{\mu_s-\mu}\,\int_{0}^{1}\Ss^s\,\dif \sigma\,+\,
\frac{\frak{a}_s^2}{4(\bv^s-\bv_s)^2}\,(\mu_s-\mu)\,\ln(\mu_s-\mu)\,\Ss^s_0\,+\,{\mathcal O}(\mu_s-\mu)\,
\end{array}
$$
with
$$\Ss^s_0=\Ss(\bv_s,\bv_s,\bv_s;\speed,\blambda)$$
being defined as in Remark \ref{rem:superscripts}.

Therefore, we have 
$$\begin{array}[t]{l} 
\frac12(\Action-\Mom)\\ \displaystyle
\qquad=\frak{a}_s \,(\bv^s-\bv_s)\, \sqrt{\mu_s-\mu} \,
\int_{0}^{1}\,\Big((1-2\sigma) \Ss^s\,+\,(\bv^s-\bv_s)\sigma\,(1-\sigma) \Ss^s_{\bv}\Big)\,\dif \sigma \\ [10pt]\displaystyle
\qquad\quad\,+\,
\frac14\,\Ss^s_0 \frak{a}_s^2\,(\mu_s-\mu)\,\ln(\mu_s-\mu)\,\,+\,{\mathcal O}(\mu_s-\mu)
\,.
\end{array}$$
We notice  that 
$$\Ss^s_0 \frak{a}_s^2\,=\,\frak{a}_s\,\sqrt{2\cap(\bv_s)}\,.
$$
Most importantly, the integral in factor of $\sqrt{\mu_s-\mu}$ is zero, since it reads
$$\int_{0}^{1}\,\partial_\sigma \Big( \sigma \,(1-\sigma) \Ss(\bv_s+\sigma (\bv^s-\bv_s), \bv_s,\bv_s;\speed,\blambda) \Big)\,\dif \sigma\,.$$
This eventually implies that 
$$\Action \,=\,\Mom\,+\,\frak{a}_s\,\sqrt{\cap(\bv_s)/2}\,(\mu_s-\mu)\,\ln(\mu_s-\mu)\,+\,{\mathcal O}(\mu_s-\mu)\,,
$$
as claimed.

The expansion of $\Xi$ is also obtained by applying Proposition~\ref{prop:asol}, first observing that
$$
\Y\,=\,
\Y^s\,+\,{\mathcal O}(\rho)\,.
$$
Then, we have by Proposition~\ref{prop:asol}
$$\Xi \begin{array}[t]{rcl} 
&=&\displaystyle  
\int_{0}^{1}\,\frac{\Y\;\dif \sigma}{\sqrt{\sigma(\sigma+\rho)}}
\,=\,
\int_{0}^{1}\,\frac{\Y^s\;\dif \sigma}{\sqrt{\sigma(\sigma+\rho)}}\,+\,{\mathcal O}(\rho\,\ln\rho)
\\[15pt]
&=&\displaystyle   - \Y^s_0 \,\ln \rho\,+\,
{\mathcal O}(1)\,.
\end{array}$$ 
In terms of $\mu_s-\mu$, the expansion of $\Xi$  thus reads
$$\Xi\,=\,- \frac12 \Y^s_0 \,\ln (\mu_s-\mu)\,+\,{\mathcal O}(1)\,,
$$
as expected.
\end{proof}

\subsection{Asymptotics of the Hessian of the action}\label{ss:ashessaction}
As has been shown in earlier work (see \emph{e.g.}~\cite{BronskiJohnson,BronskiJohnsonKapitula,Johnson,BNR-JNLS14,BNR-GDR-AEDP,BMR}), the Hessian $\nabla^2\Action$ of the action plays a major role in the understanding of the stability of periodic waves and their modulations. 

The aim of this section is to derive asymptotic expansions for $\nabla^2\Action$ in both the small amplitude limit  and the solitary wave limit. The starting point is the expression of the gradient of $\Action$ in \eqref{eq:gradaction}, which we repeat here for convenience
$$\nabla\Action(\mu,\blambda,\speed)\,=\,\int_{\bv_2(\mu,\blambda,\speed)}^{\bv_3(\mu,\blambda,\speed)} \nabla \ZZ(\bv;\mu,\blambda,\speed)\,\sqrt{\frac{2\kappa(\bv)}{\ZZ(\bv;\mu,\blambda,\speed)}}\,\dif\bv\,.$$
As in Section \ref{s:asbasic}, we can use changes of variables that `desingularize' the denominator $\sqrt{\ZZ}$. 
This would be in fact required if we were to differentiate $\nabla\Action$ under the integral sign. 
We will see that it is more efficient to expand $\nabla\Action$ and differentiate the expansion term by term rather than compute $\nabla^2\Action$ and then expand numerous terms arising.

The choice of the desingularizing change of variables must be adapted to the limit under consideration. 
As a result, the treatment of the two limits being quite different, we deal with them separately.

\subsubsection{Small amplitude limit}
The starting point of the small amplitude limit expansion of $\nabla^2\Action$ consists in rewriting
$$\nabla\Action\,=\,\int_{-\pi/2}^{\pi/2} \f \,\dif \theta\,,\qquad\quad \mbox{where } \f:=\Y\,\nabla\ZZ$$
is evaluated at
$v=\V(\theta;\bv_2,\bv_3)=\bv_2+(1+\sin\theta) (\bv_3-\bv_2)/2=m+\delta\,\sin\theta$, with 
\begin{equation}
\label{eq:defmdelta}
m:=\frac{\bv_2+\bv_3}{2}\,,\qquad\qquad 
\delta:=\frac{\bv_3-\bv_3}{2}\,.
\end{equation}

\begin{proposition}\label{prop:assmall}
Under Assumption \ref{as:main} we have the following
$$\begin{array}{rcl}
m&=&\bv_0\,+\,\frak{b}_0\,(\mu-\mu_0)\,+\,{\mathcal O}(\mu-\mu_0)^2\,,\\[10pt]
\delta&=&\frak{a}_0\,\sqrt{\mu-\mu_0}\,+\,\frak{c}_0\,(\mu-\mu_0)^{3/2}\,+\,{\mathcal O}(\mu-\mu_0)^{5/2}
\end{array}
$$ 
in the small amplitude limit, where coefficients 
 $\frak{a}_0$, $\frak{b}_0$ and $\frak{c}_0$ are those defined in \eqref{eq:abcp0} (Notation \ref{not:abcp}). In particular, we can use $\delta$ as a small parameter to describe the small amplitude limit and obtain 
$$
\frac{1}{\sqrt{\mu-\mu_0}}\,=\,\frak{a}_0\left(\frac1\delta+\frac{\frak{c}_0}{\frak{a}_0^3}\delta\right)\,+\,{\mathcal O}(\delta^3)\,,\qquad 
\sqrt{\mu-\mu_0}\,=\,\frac{1}{\frak{a}_0}\left(\delta-\frac{\frak{c}_0}{\frak{a}_0^3}\delta^3\right)\,+\,{\mathcal O}(\delta^5)\,,
$$
and
\begin{equation}\label{eq:devmgradm}
m\,=\, v_0\,+\,\frac{\frak{b}_0}{\frak{a}_0^2}\delta^2\,+\,{\mathcal O}(\delta^4)\,,
\qquad\quad
\nabla m\,=\,\frak{b}_0\,\bV_0\,+\,\frac{\frak{a}_0^2}{2}\,\bW_0\,+\,{\mathcal O}(\delta^2)\,,
\end{equation}
\begin{equation}\label{eq:devgraddelta}
\nabla \delta\,=\,\left(\frac{\frak{a}_0^2}{2\delta}+\frac{2\frak{c}_0}{\frak{a}_0}\delta\right)\,\bV_0\,+\,\frac{3}{2}\frak{b}_0\,\delta\,\bW_0
\,+\,\frac{\frak{a}_0^2}{4}\delta\,\bZ_0
\,+\,{\mathcal O}(\delta^2)\,,
\end{equation}
the vectors $\bV_0$, $\bW_0$ and $\bZ_0$ being those defined in Notation \ref{not:UVWZ}.
\end{proposition}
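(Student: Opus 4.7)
The proposition is a rewriting of Proposition~\ref{prop:asvgradv} in terms of $(m,\delta)$, followed by a change of small parameter from $\sqrt{\mu-\mu_0}$ to $\delta$. First I would read off the expansions of $m=(\bv_2+\bv_3)/2$ and $\delta=(\bv_3-\bv_2)/2$ by taking the half-sum and half-difference in \eqref{eq:asharmv}: the odd-in-$\sqrt{\mu-\mu_0}$ contributions $\pm\frak{a}_0\sqrt{\mu-\mu_0}$ and $\pm\frak{c}_0(\mu-\mu_0)^{3/2}$ cancel in $m$, while the even contribution $\frak{b}_0(\mu-\mu_0)$ cancels in $\delta$. The only delicate point is the improved remainder $\mathcal{O}((\mu-\mu_0)^{5/2})$ claimed for $\delta$: a naive subtraction of the $\mathcal{O}((\mu-\mu_0)^2)$ errors on $\bv_2$ and $\bv_3$ individually would only give $\mathcal{O}((\mu-\mu_0)^2)$. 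The improvement follows from the Morse-type structure of the roots of $\mu-\Potential(\cdot;\speed,\blambda)$ near the nondegenerate center $\bv_0$: by the Morse lemma the two zeros $\bv_2,\bv_3$ are smooth functions of the branches $\mp\sqrt{\mu-\mu_0}$, so $\bv_3-\bv_2$ is $\sqrt{\mu-\mu_0}$ times a smooth function of $\mu-\mu_0$ and its expansion contains only half-integer powers. This parity property is built into Lemma~\ref{lem:as} and is what makes the sharper error bound rigorous.

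Next I would invert the expansion of $\delta$ to express $\sqrt{\mu-\mu_0}$ in terms of $\delta$. From $\delta/\frak{a}_0 = \sqrt{\mu-\mu_0}\,(1 + (\frak{c}_0/\frak{a}_0)(\mu-\mu_0) + \mathcal{O}((\mu-\mu_0)^2))$ one finds iteratively $\sqrt{\mu-\mu_0} = \frak{a}_0^{-1}(\delta - (\frak{c}_0/\frak{a}_0^3)\delta^3) + \mathcal{O}(\delta^5)$, and the reciprocal series $1/\sqrt{\mu-\mu_0} = \frak{a}_0(1/\delta + (\frak{c}_0/\frak{a}_0^3)\delta) + \mathcal{O}(\delta^3)$ by a standard geometric-series manipulation. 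Substituting into the expansion of $m$ immediately gives $m = \bv_0 + (\frak{b}_0/\frak{a}_0^2)\delta^2 + \mathcal{O}(\delta^4)$.

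Finally, for the gradient identities I would take the half-sum and half-difference of \eqref{eq:asharmgradv}. The singular $\pm\frac{\frak{a}_0}{2\sqrt{\mu-\mu_0}}\bV_0$ and the $\pm\sqrt{\mu-\mu_0}\,(\tfrac32\frak{c}_0\bV_0 + \tfrac32\frak{a}_0\frak{b}_0\bW_0 + \tfrac{\frak{a}_0^3}{4}\bZ_0)$ terms cancel in $\nabla m$ but survive with the doubled coefficient in $\nabla\delta$, while the regular $\frak{b}_0\bV_0 + \tfrac{\frak{a}_0^2}{2}\bW_0$ terms do the opposite. This yields $\nabla m = \frak{b}_0\bV_0 + \frac{\frak{a}_0^2}{2}\bW_0 + \mathcal{O}(\mu-\mu_0)$ and $\nabla\delta = \frac{\frak{a}_0}{2\sqrt{\mu-\mu_0}}\bV_0 + \sqrt{\mu-\mu_0}\,(\tfrac32\frak{c}_0\bV_0 + \tfrac32\frak{a}_0\frak{b}_0\bW_0 + \tfrac{\frak{a}_0^3}{4}\bZ_0) + \mathcal{O}(\mu-\mu_0)$. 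Re-expressing $\sqrt{\mu-\mu_0}$ and $1/\sqrt{\mu-\mu_0}$ in terms of $\delta$ via the inversion above and collecting the $\bV_0$ contributions — the coefficient $\frac{\frak{a}_0^2}{2\delta} + \frac{2\frak{c}_0}{\frak{a}_0}\delta$ of $\bV_0$ in \eqref{eq:devgraddelta} arises from combining $\frac{\frak{a}_0^2}{2}(\frac{1}{\delta} + \frac{\frak{c}_0}{\frak{a}_0^3}\delta)$ with $\frac{3\frak{c}_0}{2\frak{a}_0}\delta$ — delivers \eqref{eq:devgraddelta}. The only real obstacle in the whole proof is justifying the improved remainder for $\delta$ via the parity argument sketched in the first paragraph; everything else is systematic bookkeeping.
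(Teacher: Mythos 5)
Your proof is correct and follows essentially the same route as the paper: taking sums and differences in \eqref{eq:asharmv}--\eqref{eq:asharmgradv}, using the symmetry/parity cancellation (i.e.\ the fact, established in the proof of Lemma~\ref{lem:as}, that $z_\pm(\varepsilon)=\varphi(\pm\sqrt{\varepsilon})$ with $\varphi$ smooth, so that $\delta$ contains only odd powers of $\sqrt{\mu-\mu_0}$) to get the improved ${\mathcal O}(\mu-\mu_0)^{5/2}$ remainder, then inverting to express $\sqrt{\mu-\mu_0}$ and $1/\sqrt{\mu-\mu_0}$ in powers of $\delta$ and substituting into the half-difference of the gradient expansions. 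The only difference is presentational: you invoke the Morse-lemma/parity structure explicitly where the paper simply says the ${\mathcal O}(\mu-\mu_0)^2$ term cancels ``for the same symmetry reason,'' and your bookkeeping for the coefficient of $\bV_0$ in \eqref{eq:devgraddelta} matches the paper's computation exactly.
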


\begin{proof}
Expansions of $m$, $\delta$ and $\nabla m$ are derived by taking sums and differences in \eqref{eq:asharmv} and \eqref{eq:asharmgradv} (Proposition~\ref{prop:asvgradv}) and noting that the ${\mathcal O}(\mu-\mu_0)^2$ term in the expansion of $\delta$ cancels out at leading order for the same symmetry reason as the ${\mathcal O}(\mu-\mu_0)$ does. 
The expansion of $\sqrt{\mu-\mu_0}$ follows readily by inserting $\sqrt{\mu-\mu_0}=\delta/\frak{a}_0\,+\,{\mathcal O}(\delta^3)$ in the second and third term of the expansion of $\delta$. Then the expansion of $1/\sqrt{\mu-\mu_0}$ stems directly from it. At last, the proof is achieved by the substitution of those in
$$
\nabla \delta\,=\,\left(\frac{\frak{a}_0}{2}\frac{1}{\sqrt{\mu-\mu_0}}+\frac{3}{2}\frak{c}_0\sqrt{\mu-\mu_0}\right)\,\bV_0\,+\,\frac{3}{2}\frak{a}_0\frak{b}_0\,\sqrt{\mu-\mu_0}\,\bW_0
\,+\,\frac{\frak{a}_0^3}{4}\,\sqrt{\mu-\mu_0}\,\bZ_0\,+\,{\mathcal O}(\mu-\mu_0)
$$
obtained from \eqref{eq:asharmgradv}.
\end{proof}

Arguments used to prove Proposition~\ref{prop:asactionperiod} may then also be used to foresee the form of the asymptotic expansion of $\nabla^2\Action$. Indeed from
$$
\begin{array}{rcccccc}
\nabla^2\Action&=&\displaystyle
\int_{-\pi/2}^{\pi/2} \nabla \f\,\dif\theta
&+&\displaystyle
\int_{-\pi/2}^{\pi/2} \f_v\,\dif\theta \,\otimes\,\nabla m
&+&\displaystyle
\int_{-\pi/2}^{\pi/2} \f_v \sin \theta\,\dif\theta \,\otimes\,\nabla \delta\\[10pt]
&&&+&\displaystyle
\int_{-\pi/2}^{\pi/2} (\f_w+\f_z)\,\dif\theta \,\otimes\,\nabla m
&+&\displaystyle
\int_{-\pi/2}^{\pi/2} (-\f_w+\f_z)\,\dif\theta \,\otimes\,\nabla \delta
\end{array}
$$
stems the existence of an expansion in powers of $\delta$, starting at order $\delta^{-1}$ because of the presence of $\nabla\delta$. Yet, now that this expansion is known to exist, it is in fact quicker to expand $\nabla\Action$ and then differentiate term by term to obtain its coefficients.

The expansion of $\nabla\Action$ is merely based on a Taylor expansion of $\f$ and on the expansion of $m$ in \eqref{eq:devmgradm} through $\bv_2\,=\, m\,-\,\delta$ and $\bv_3\,=\, m\,+\,\delta$. Note that the function $\f=\Y\,\nabla\ZZ$ inherits the symmetries of $\Redpot$ through its factor $\Y$, so that
$$\f_\bw^0=\f_\bz^0\,,\qquad
\f_{\bv\bw}^0=\f_{\bv\bz}^0\,,\qquad
\f_{\bw\bw}^0=\f_{\bz\bz}^0\,,$$
where the superscript $0$ stands for evaluation at $\bv=\bv_0$, $\bw=\bv_0$, $\bz=\bv_0$. Since $m$ is at an ${\mathcal O}(\delta^2)$ distance from $\bv_0$, we thus have
$$
\begin{array}{l}
\f=\f^0\,+\,\f_\bv^0\,(m-\bv_0+\delta \sin\theta) \,+\,2\,\f_\bz^0\,(m-\bv_0)\,+\,
\tfrac12 \f_{\bv\bv}^0\,(m-\bv_0+\delta \sin\theta)^2
\\[5pt]
\qquad\quad\,+\,2\,\f_{\bv\bz}^0\,(m-\bv_0+\delta \sin\theta)\,(m-\bv_0)\,+\,
\tfrac12 \,\f_{\bz\bz}^0\,((m-\bv_0+\delta)^2+(m-\bv_0-\delta)^2)\\[5pt]
\qquad\quad\,+\,\f_{\bw\bz}^0 \,(m-\bv_0+\delta)\,(m-\bv_0-\delta)
\,+\,{\mathcal O}(\delta^3)\,.
\end{array}$$
By \eqref{eq:devmgradm} this expansion reduces to
$$
\f=\f^0\,+\,\f_\bv^0\,\delta \sin\theta
\,+\,\Big(\,(\f_\bv^0+\,2\,\f_\bz^0)\,\frac{\frak{b}_0}{\frak{a}_0^2}\,+\, \f_{\bv\bv}^0\,\frac{\sin^2\theta}{2}\,+\,
\f_{\bz\bz}^0\,-\,\f_{\bw\bz}^0\Big)\,\delta^2\,+\,{\mathcal O}(\delta^3)\,.
$$
By integration over the interval $(-\pi/2,\pi/2)$, we 
thus obtain a rather simple expansion for $\nabla\Action$, which reads
$$
\frac1\pi\,\nabla\Action\,=\,
\frac1\pi\,\int_{-\pi/2}^{\pi/2}  \f\,\dif\theta
\,=\,
\f^0\,+\,\Big(\,(\f_\bv^0+\,2\,\f_\bz^0)\,\frac{\frak{b}_0}{\frak{a}_0^2}\,+\, \frac14\,\f_{\bv\bv}^0\,+\,
\f_{\bz\bz}^0\,-\,\f_{\bw\bz}^0\Big)\,\delta^2
\,+\,{\mathcal O}(\delta^3)\,.
$$

Now, differentiating with respect to $(\mu,\blambda,\speed)$ the 
${\mathcal O}(\delta^3)$ terms yields an ${\mathcal O}(\delta)$
remainder because of \eqref{eq:devgraddelta}. As to the leading order term $\f^0$, its differentiation yields two contributions, one coming from the dependence of $\Y$ and $\ZZ$ on $(\mu,\blambda,\speed)$ and one coming from their dependence on $\bv$, $\bw$, $\bz$ and that of $\bv_0$ on $(\blambda,\speed)$, since $\f^0$ is the product of $\Y$ and $\nabla\ZZ$ evaluated at $\bv=\bv_0$, $\bw=\bv_0$, $\bz=\bv_0$. Consequently, the Hessian of $\Action$ expands as follows $$\frac1\pi\,\nabla^2\Action\,=\,\nabla\f^0\,+\,
(\f_\bv^0+\,2\,\f_\bz^0)\,\otimes \,\nabla\bv_0\,+\,
\Big(\,(\f_\bv^0+\,2\,\f_\bz^0)\,\frac{\frak{b}_0}{\frak{a}_0^2}\,+\, \frac14\,\f_{\bv\bv}^0\,+\,
\f_{\bz\bz}^0\,-\,\f_{\bw\bz}^0\Big)\,\otimes 2\,\delta\,\nabla\delta
\,+\,{\mathcal O}(\delta)
$$
in the small amplitude limit $\delta\to 0$.
 
We are now ready to prove the following.
 
\begin{theorem} \label{thm:asharmhess}  
With notational conventions \ref{not:UVWZ}--\ref{not:abcp}, under Assumption \ref{as:main} the Hessian of the action has the asymptotic behavior
\begin{equation}\label{eq:asharmhess}
\frac{1}{\pi\Y^0}\,\nabla^2\Action\,=\,\begin{array}[t]{l}
\displaystyle
\alpha_0\,\bV_0\otimes \bV_0\,+\,\beta_0\,(\bV_0\otimes\bW_0\,+\,\bW_0\otimes\bV_0)\,+\,\frac{\frak{a}_0^2}{2}\,\bW_0\otimes\bW_0\\ [10pt]\displaystyle
\qquad\qquad\quad\,-\,\bT_0\otimes\bT_0\,+\,
\frac{\frak{a}_0^2}{4}\,(\bV_0\otimes\bZ_0\,+\,\bZ_0\otimes\bV_0) 
\,+\,{\mathcal O}(\delta)
\end{array}
\end{equation}
when $\delta=(\bv_3-\bv_2)/2 \to 0$, where 
$$
\begin{array}{rcl}
\alpha_0&:=&\displaystyle
\frac{1}{\Y^0}\,\Big(\,(\frak{b}_0\,(\Y_\bv^0+\,2\,\Y_\bz^0)\,+\,\frak{a}_0^2\,\big(\frac14\,\Y_{\bv\bv}^0\,+\,
\Y_{\bz\bz}^0\,-\,\Y_{\bw\bz}^0\big)\Big)\,,\\[10pt]
\beta_0&:=&\displaystyle
\frak{b}_0\,+\,\frac{\frak{a}_0^2}{2}\,\frac{\Y_\bv^0}{\Y^0}\,
\end{array}$$
may be computed explicitly in terms of derivatives of $\Potential$ and $\kappa$ at $\bv_0$ 
from \eqref{eq:redpot}-\eqref{eq:defY}.
\end{theorem}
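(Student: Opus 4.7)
The discussion preceding the statement has already reduced matters to the explicit formula displayed above, of the form
$$\tfrac{1}{\pi}\nabla^2\Action\,=\,\nabla\f^0\,+\,(\f_\bv^0+2\f_\bz^0)\otimes\nabla\bv_0\,+\,C\otimes 2\delta\nabla\delta\,+\,\mathcal{O}(\delta),$$
where $C$ is the explicit coefficient vector appearing there and $\nabla\f^0$ stands for the gradient of $\f=\Y\,\nabla\ZZ$ in $(\mu,\blambda,\speed)$ obtained with the three arguments $(\bv,\bw,\bz)$ held fixed at $\bv_0$. My plan is to compute every ingredient, substitute into this formula, expand all tensor products in the basis $\bV_0,\bW_0,\bZ_0,\bT_0$, and match with the target expansion.

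I first settle the kinematic ingredients. From Proposition~\ref{prop:assmall} I read off $2\delta\nabla\delta = \frak{a}_0^2\bV_0 + \mathcal{O}(\delta^2)$. Implicit differentiation of $\partial_\bv\Potential(\bv_0(\speed,\blambda);\speed,\blambda) = 0$ in $(\speed,\blambda)$ (using $\mu$-independence of $\bv_0$), combined with $\ZZ_{\bv\bv}^0 = -2/\frak{a}_0^2$ and $\nabla\ZZ_\bv^0 = \bW_0$, yields $\nabla\bv_0 = \tfrac{\frak{a}_0^2}{2}\bW_0$. Leibniz expansions of $\f = \Y\nabla\ZZ$ at the diagonal $\bv=\bw=\bz=\bv_0$ then give $\f_\bv^0 = \Y_\bv^0\bV_0 + \Y^0\bW_0$, $\f_\bz^0 = \Y_\bz^0\bV_0$, $\f_{\bv\bv}^0 = \Y_{\bv\bv}^0\bV_0 + 2\Y_\bv^0\bW_0 + \Y^0\bZ_0$, $\f_{\bz\bz}^0 = \Y_{\bz\bz}^0\bV_0$, and $\f_{\bw\bz}^0 = \Y_{\bw\bz}^0\bV_0$, where the $(\bw,\bz)$-symmetry of $\Redpot$ is used. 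Finally, $\nabla\f^0 = \bV_0\otimes\nabla^{\mathrm{param}}\Y\big|_0 + \Y^0\,\nabla^2\ZZ^0$, whose second summand is $-\Y^0\,\bT_0\otimes\bT_0$ by Notation~\ref{not:UVWZ} (vanishing trivially when $N=1$).

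The crux is a pair of algebraic identities linking partial derivatives of $\Y$ at the diagonal to $\bZ_0$ and to $\frak{b}_0$, namely $\nabla^{\mathrm{param}}\Y\big|_0 = \tfrac{\frak{a}_0^2\Y^0}{4}\bZ_0$ and $\frak{a}_0^2\Y_\bz^0 = \Y^0\frak{b}_0$. I will prove both directly from $\Y^2\Redpot = 2\kappa$ and the integral representation~\eqref{eq:redpot}: evaluating the latter at the diagonal gives $\Redpot^0 = \tfrac12\partial_\bv^2\Potential(\bv_0) = 1/\frak{a}_0^2$ and, after a short integral computation, $\Redpot_\bz^0 = \tfrac16\partial_\bv^3\Potential(\bv_0)$. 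Plugging these into $\partial^{\mathrm{param}}\Y = -\tfrac{\Y}{2\Redpot}\partial^{\mathrm{param}}\Redpot$ and $\Y_\bz = -\tfrac{\Y}{2\Redpot}\Redpot_\bz$, then using $(\bZ_0)_i = -\partial_i\partial_\bv^2\Potential(\bv_0)$ and the definition of $\frak{b}_0$ in Notation~\ref{not:abcp}, produces both identities.

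Substituting all ingredients into the reduced formula, distributing tensor products, and dividing by $\Y^0$, the target expansion emerges: the $\alpha_0\bV_0\otimes\bV_0$, $\tfrac{\frak{a}_0^2}{2}\bW_0\otimes\bW_0$, and $\tfrac{\frak{a}_0^2}{4}\bZ_0\otimes\bV_0$ contributions arise from $C\otimes 2\delta\nabla\delta$; the $\tfrac{\frak{a}_0^2}{4}\bV_0\otimes\bZ_0$ term comes from $\bV_0\otimes\nabla^{\mathrm{param}}\Y|_0$ via the first identity; and $-\bT_0\otimes\bT_0$ comes from $\Y^0\nabla^2\ZZ^0$. The symmetric pair $\beta_0(\bV_0\otimes\bW_0+\bW_0\otimes\bV_0)$ is the delicate one: its two halves originate in different blocks of the reduced formula and have a priori different coefficients, which coincide to the common value $\beta_0$ precisely by virtue of the second identity. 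The main obstacle I anticipate is thus the careful verification of these two algebraic identities, which serve as the glue ensuring both the symmetry of the final matrix and the correct $\bZ_0$ contributions; everything else is systematic bookkeeping.
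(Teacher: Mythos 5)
Your proposal is correct and follows essentially the same route as the paper's own proof: the same reduced expansion of $\tfrac1\pi\nabla^2\Action$, the same computation $\nabla\bv_0=\tfrac{\frak{a}_0^2}{2}\,\bW_0$, the same Leibniz formulas for $\f^0_\bv$, $\f^0_{\bv\bv}$, $\f^0_\bz$, $\f^0_{\bz\bz}$, $\f^0_{\bw\bz}$ and for $\nabla\f^0=\bV_0\otimes\nabla\Y^0-\Y^0\,\bT_0\otimes\bT_0$, and the same two identities $\nabla\Y^0=\tfrac{\frak{a}_0^2}{4}\,\Y^0\,\bZ_0$ and $\frak{a}_0^2\,\Y^0_\bz=\frak{b}_0\,\Y^0$ obtained from the integral representation of $\Redpot$, the latter being exactly what reconciles the two off-diagonal coefficients into the single $\beta_0$. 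The only (harmless) slip is your attribution of the $\tfrac{\frak{a}_0^2}{2}\,\bW_0\otimes\bW_0$ term to the block involving $2\,\delta\,\nabla\delta$: since $2\,\delta\,\nabla\delta=\frak{a}_0^2\,\bV_0+{\mathcal O}(\delta^2)$, that block only produces terms ending in $\otimes\,\bV_0$, and the $\bW_0\otimes\bW_0$ contribution in fact arises from $(\f^0_\bv+2\,\f^0_\bz)\otimes\nabla\bv_0$.
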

The result is valid in both cases $N=1$ and $N=2$, the only difference being in the definitions \eqref{eq:vectstwo} and \eqref{eq:vectsone}
of vectors $\bV_0$, $\bW_0$, $\bZ_0$, the vector $\bT_0$ being equal to zero in the case $N=1$ and defined by the second relation in \eqref{eq:vectT} in the case $N=2$ - in both cases 
the first relation in \eqref{eq:vectT} stands, $\nabla^2\ZZ^0=-\bT_0\otimes \bT_0$.

\begin{proof}
Of course \eqref{eq:asharmhess} stems from the expansion derived above
and the one of $\nabla\delta$ in \eqref{eq:devgraddelta}. In addition, to achieve the proof, we must find expressions for $\nabla\bv_0$ and for the derivatives of $\f$ at $\bv=\bv_0$, $\bw=\bv_0$, $\bz=\bv_0$.

By definition we have $$\partial_\bv\Potential(\bv_{0}(\speed,\blambda);\speed,\blambda)=0$$
so that by the chain rule we get
\begin{equation}\label{eq:nablavzero}
\nabla\bv_0\,=\,-\,\frac{1}{\Potential_{\bv\bv}^0}\,\nabla \Potential_\bv^0\,=\,\frac{\frak{a}_0^2}{2}\,\bW_0\,.
\end{equation}

Now we have by the Leibniz formula
\begin{equation}\label{eq:nablafzero}
\nabla\f^0\,=\,\Y^0\,\nabla^2\ZZ^0\,+\,\nabla\ZZ^0\otimes \nabla\Y^0
\,=\,-\,\Y^0\,\bT_0\otimes \bT_0
\,+\,\bV_0\otimes \nabla\Y^0\,.
\end{equation}
Furthermore, we can show that
\begin{equation}
\label{eq:nablaYzero}
\nabla \Y^0\,=\,\frac{\frak{a}_0^2}{4}\,\Y^0\,\bZ_0\,.
\end{equation}
Indeed, since by definition 
$$\Y(\bv,\bw,\bz;\speed,\blambda)\,:=\,\sqrt{2 \cap(\bv)/|\Redpot(\bv,\bw,\bz;\speed,\blambda)|}\,,$$
we have 
$$\frac{\nabla\Y^0}{\Y^0}\,=\,-\,\frac{\nabla\Redpot^0}{2\,\Redpot^0}\,.$$
Furthermore, by differentiating under the integral sign in 
\eqref{eq:redpot} 
we have
$$\nabla\Redpot(\bv,\bw,\bz;\speed,\blambda)=-
\int_{0}^{1}\int_{0}^{1}t \nabla\ZZ_{\bv\bv}(\bw+t(\bz-\bw)+ts(\bv-\bz);\speed,\blambda)\,\dif s\dif t\,,$$
which gives in particular at $\bv=\bv_0$, $\bw=\bv_0$, $\bz=\bv_0$ the value 
$$\nabla\Redpot^0\,=\,-\,\nabla\ZZ_{\bv\bv}^0\,\int_{0}^{1}\int_{0}^{1} t\,\dif s \dif t\,=\,-\,\frac{\bZ_0}{2}\,.$$
Concerning the value of $\Redpot^0$, as we have used repeatedly, it is readily given by
$$\Redpot^0\,=\,\Potential_{\bv\bv}^0\,\int_{0}^{1}\int_{0}^{1}t\,\dif s\dif t\,=\,\frac{\Potential_{\bv\bv}^0}{2}\,=\,\frac{1}{\frak{a}_0^2}$$
by definition of $\frak{a}_0$. This eventually proves \eqref{eq:nablaYzero}.

Regarding the other derivatives of $\f$ we have by the Leibniz formula and the definition of $\bV_0$, $\bW_0$, $\bZ_0$ in Notation \ref{not:UVWZ}
\begin{equation}\label{eq:derfzero}
\left\{\begin{array}{l}
\f^0_{\bv}\,=\,\Y^0_{\bv}\,\bV_0\,+\,\Y^0\,\bW_0\,,\qquad
\f^0_{\bv\bv}\,=\,\Y^0_{\bv\bv}\,\bV_0\,+\,2\,\Y^0_{\bv}\,\bW_0\,+\,\Y^0\,\bZ_0\,,\\ [10pt]
\f^0_{\bz}\,=\,\Y^0_{\bz}\,\bV_0\,,\qquad
\f^0_{\bz\bz}\,=\,\Y^0_{\bz\bz}\,\bV_0\,,\qquad
\f^0_{\bw\bz}\,=\,\Y^0_{\bw\bz}\,\bV_0
\,.
\end{array}\right.
\end{equation}

Therefore, by using \eqref{eq:devgraddelta}, \eqref{eq:nablavzero}, \eqref{eq:nablafzero}, \eqref{eq:nablaYzero}, and \eqref{eq:derfzero}
in the original expansion
$$\frac1\pi\,\nabla^2\Action\,=\,\nabla\f^0\,+\, (\f_\bv^0+\,2\,\f_\bz^0)\,\otimes \,\nabla\bv_0\,+\, \Big(\,(\f_\bv^0+\,2\,\f_\bz^0)\,\frac{\frak{b}_0}{\frak{a}_0^2}\,+\, \frac14\,\f_{\bv\bv}^0\,+\, \f_{\bz\bz}^0\,-\,\f_{\bw\bz}^0\Big)\,\otimes 2\,\delta\,\nabla\delta
\,+\,{\mathcal O}(\delta)
$$
we arrive at 
$$\frac1\pi\,\nabla^2\Action\,=\,\begin{array}[t]{l}\Big(\,(\frak{b}_0\,(\Y_\bv^0+\,2\,\Y_\bz^0)\,+\,\frak{a}_0^2\,\big(\frac14\,\Y_{\bv\bv}^0\,+\,
\Y_{\bz\bz}^0\,-\,\Y_{\bw\bz}^0\big)\Big)\,\bV_0\otimes \bV_0\\ [10pt]
\,+\,\Y^0\,\frac{\frak{a}_0^2}{2}\,\bW_0\otimes\bW_0\,-\,\Y^0\,\bT_0\otimes\bT_0\,+\,
\Y^0\,\frac{\frak{a}_0^2}{4}\,(\bV_0\otimes\bZ_0\,+\,\bZ_0\otimes\bV_0) \\ [10pt]
\,+\,\big(\frak{b}_0\,\Y^0\,+\,\Y_\bv^0\,\frac{\frak{a}_0^2}{2}\big)\,\bW_0\otimes\bV_0
\,+\,\big(\frak{a}_0^2\,\Y_\bz^0\,+\,\Y_\bv^0\,\frac{\frak{a}_0^2}{2}\big)\,\bV_0\otimes\bW_0
\,+\,{\mathcal O}(\delta)\,.
\end{array}$$

To complete the proof of \eqref{eq:asharmhess}, we check that
\begin{equation}
\label{eq:bYzzero}
\frak{b}_0 \Y^0\,=\,\frak{a}_0^2\,\Y^0_{\bz}\,.
\end{equation}
We notice indeed that 
$$\frac{\Y^0_{\bz}}{\Y^0}\,=\,-\,\frac{\Redpot^0_{\bz}}{2\,\Redpot^0}\,,$$
and recalling that $\Redpot^0=1/\frak{a}_0^2$, we obtain 
$$\Redpot^0_{\bz}\,=\,\Potential_{\bv\bv\bv}^0\,\int_{0}^{1}\int_{0}^{1}t^2(1-s)\,\dif s\dif t\,=\,\frac{\Potential_{\bv\bv\bv}^0}{6}\,=\,-\,\frac{2\frak{b}_0}{\frak{a}_0^4}$$
by differentiating under the integral sign in \eqref{eq:redpot} and by definition of $\frak{a}_0$ and $\frak{b}_0$ in \eqref{eq:abcp0}.
\end{proof}

\subsubsection{Soliton limit}\label{ss:sollim}
The asymptotic behavior of $\nabla^2\Action$ is unsurprisingly more singular in the soliton limit than in the small amplitude, or harmonic limit. It will turn out that the expansion of  $\nabla^2\Action$ actually involves similar matrix-valued functions - evaluated at $\bv_s$ instead of $\bv_0$ - but also expressions evaluated at $\bv^s$ and a separation of scales that are not present in the harmonic limit.

Our starting point is the desingularization
$$
\nabla\Action
\,=\,\int_{0}^{1} \f\,\frac{\dif\sigma}{\sqrt{\sigma(\sigma+\rho)}}
\,=\,\int_{0}^{1} \frac{\fc}{\sqrt{1-\sigma}} \,\frac{\dif\sigma}{\sqrt{\sigma(\sigma+\rho)}}\,,$$
where $\f=\Y\,\nabla\ZZ$ and $\fc:=\Yc\,\nabla\ZZ$ are evaluated at $\bv=\bv_2+\sigma(\bv_3-\bv_2)$, $\bw=\bv_1$, $\bz=\bv_2$, and $\bzc=\bv_3$, and 
$$\rho=\frac{\bv_2-\bv_1}{\bv_3-\bv_2}\,.$$
We recall that $\Y$ is defined by \eqref{eq:redpot}-\eqref{eq:defY} and that $\Yc$ is given by 
$$\Yc(\bv,\bw,\bz,\bzc;\speed,\blambda)\,=\sqrt{\frac{\bzc-\bv}{\bzc-\bz}}\;\Y(\bv,w,z;\speed,\blambda)\,=\,
\frac{1}{\sqrt{\bzc-\bz}}\,\sqrt{\frac{2 \cap(\bv)}{|\Redpotc(\bv,\bw,\bz,\bzc;\speed,\blambda)|}}$$
where 
$$\Redpotc(\bv,\bw,\bz,\bzc;\speed,\blambda)=
\int_{0}^{1}\int_{0}^{1}\int_{0}^{1}t^2s\,\partial_\bv^3\Potential(\bw+t(\bz-\bw)+ts(\bzc-\bz)+tsr \,(\bv-\bzc);\speed,\blambda)\,\dif r\dif s\dif t\,,$$
is such that
$$\ZZ(\bv;\mu,\blambda,\speed)
\,=\,\mu-\Potential(\bv;\speed,\blambda)
\,=\,(\bv- \bv_1)(\bv-\bv_2) (\bv_3-\bv) \Redpotc(\bv,\bv_1,\bv_2,\bv_3;\speed,\blambda)\,.$$

To describe the soliton limit, 
we shall use $\rho$ as our small parameter - essentially as $\delta$ in the harmonic limit. Therefore we begin by recasting in terms of $\rho$ some of the previously obtained expansions. 

\begin{proposition}\label{prop:assol}
With notational conventions \ref{not:derivatives}--\ref{not:UVWZ}--\ref{not:abcp}, under Assumption \ref{as:main}, and with 
$$\rho\,=\,\frac{\bv_2-\bv_1}{\bv_3-\bv_2}
\qquad\mbox{and}\qquad
\h_s:=\bv^s-\bv_s$$
we have, in the limit $\rho\to0$,
\begin{equation}\label{eq:devmu}
\sqrt{\mu_s-\mu}\,=\,\frac{\h_s}{2\frak{a}_s}\,\left(\rho \,-\,\frac12 \rho^2\,+\,\frac14\,\Big(1\,-\,\frac{(\frak{b}_s+\frak{p}_s) h_s}{\frak{a}_s^2}\,-\,\frac{\frak{c}_s\h_s^2}{\frak{a}_s^3}\Big)\,\rho^3\right)\,+\,{\mathcal O}(\rho^4)\,,
\end{equation}
thus
$$
\left\{\begin{array}{rcl}
\mu_s-\mu&=&\displaystyle
\frac{\h_s^2}{4\frak{a}_s^2}\,(\rho^2\,-\,\rho^3)\,+\,{\mathcal O}(\rho^4)\,,\\[10pt]\displaystyle
\frac{1}{\sqrt{\mu_s-\mu}}&=&\displaystyle
\frac{2\frak{a}_s}{\rho\,\h_s}\,+\,\frac{\frak{a}_s}{\h_s}\,+\,\frac{\h_s}{2\frak{a}_s}\,\Big(\frac{\frak{b}_s+\frak{p}_s}{\h_s}\,+\,\frac{\frak{c}_s}{\frak{a}_s}\Big)\,\rho\,+\,{\mathcal O}(\rho^2)\,,
\end{array}\right.$$
\begin{equation}\label{eq:devvi}
\left\{\begin{array}{lll}\displaystyle
\bv_1=\bv_s\,-\,\frac{\h_s}{2} \,\rho\,&\displaystyle+\,\frac{\h_s}{4}\,\Big(1+\frac{\frak{b}_s\,\h_s}{\frak{a}_s^2}\Big)\,\rho^2\,-\,\frac{h_s}{8}\,\Big(1+\frac{(\frak{b}_s-\frak{p}_s)\,\h_s}{\frak{a}_s^2}\Big)\,\rho^3&+\,{\mathcal O}(\rho^4)\,,\\ [15pt]
\displaystyle
\bv_2=\bv_s\,+\,\frac{\h_s}{2} \,\rho\,&\displaystyle-\,\frac{\h_s}{4}\,\Big(1-\frac{\frak{b}_s\,\h_s}{\frak{a}_s^2}\Big)\,\rho^2\,+\,
\frac{h_s}{8}\,\Big(1-\frac{(3\frak{b}_s+\frak{p}_s)\,\h_s}{\frak{a}_s^2}\Big)\,\rho^3\,&+\,{\mathcal O}(\rho^4)\,,\\ [15pt]
\displaystyle
\bv_3=\bv^s\,&\displaystyle-\,\frac{\frak{p}_s\,\h_s^2}{4\frak{a}_s^2}\,
{
(\rho^2-\rho^3)}
\,&+\,{\mathcal O}(\rho^4)\,,
\end{array}
\right.
\end{equation}
\begin{equation}\label{eq:devgradvi}
\left\{\begin{array}{rcl}\displaystyle
\nabla\bv_1&=&\displaystyle
\frac{\frak{a}_s^2}{\rho\,\h_s}\,\bV_s
\,-\,\Big(\frak{b}_s-\frac{\frak{a}_s^2}{2\h_s}\Big)\,\bV_s\,-\,\frac{\frak{a}_s^2}{2}\,\bW_s\,
\\[10pt]
&&\displaystyle 
\quad\,+\,
{\frac{1}{4}\,\Big(\frak{b}_s+\frak{p}_s
\,+\,\frac{8\frak{c}_s\h_s}{\frak{a}_s}\Big)}
\,\rho\,\bV_s
\,+\, \frac34\h_s\frak{b}_s\,\rho\, \bW_s + \frac{\h_s\frak{a}_s^2}{8}\,\rho\, \bZ_s +\,{\mathcal O}(\rho^2)\,,\\ [15pt]
\displaystyle
\nabla\bv_2&=&\displaystyle
-\,\frac{\frak{a}_s^2}{\rho\,\h_s}\,\bV_s
\,-\,\Big(\frak{b}_s+\frac{\frak{a}_s^2}{2\h_s}\Big)\,\bV_s\,-\,\frac{\frak{a}_s^2}{2}\,\bW_s\,
\\[10pt]
&&\displaystyle 
\quad\,-\,
{\frac{1}{4}\,\Big(\frak{b}_s+\frak{p}_s
\,+\,\frac{8\frak{c}_s\h_s}{\frak{a}_s}\Big)}
\,\rho\,\bV_s
\,-\, \frac34\h_s\frak{b}_s\,\rho\, \bW_s - \frac{\h_s\frak{a}_s^2}{8}\,\rho\, \bZ_s +\,{\mathcal O}(\rho^2)\,,\\ [15pt]
\displaystyle
\nabla\bv_3&=& \qquad \displaystyle\,\frak{p}_s\,
\bV^s
\,+\,{\mathcal O}(\rho^2)\,,
\end{array}
\right.\end{equation}
and
\begin{equation}\label{eq:devgradrho}
\nabla\rho=\begin{array}[t]{l}
\displaystyle
-\Big[\frac{\frak{a}_s^2}{\h_s^2}\,\left(\frac{2}{\rho}+3\right)
\,+\,\frac{1}{\h_s}\,
{\Big(2\frak{b}_s\,+\,\frak{p}_s\,+\,
\frac{4\frak{c}_s}{\frak{a}_s}
\,+\,\frac{3\frak{a}_s^2}{2\h_s}\Big)}
\,\rho\Big]\,\bV_s\\ [15pt]
\displaystyle
\quad\,-\,\frac{\frak{p}_s}{\h_s}\rho\,\bV^s\,-\,\frac12\,\Big(3\frak{b}_s+\frac{\frak{a}_s^2}{\h_s}\Big)\,\rho\,\bW_s
\,-\,\frac{\frak{a}_s^2}{4}\,\rho\,\bZ_s
\,+\,{\mathcal O}(\rho^2)\,.
\end{array}
\end{equation}
\end{proposition}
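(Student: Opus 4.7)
The entire proposition is a reformulation of the already established Proposition~\ref{prop:asvgradv} with the small parameter changed from $\sqrt{\mu_s-\mu}$ to $\rho=(\bv_2-\bv_1)/(\bv_3-\bv_2)$. My strategy therefore consists in performing a series inversion to relate these two parameters, and then substituting back everywhere.

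\textbf{Step 1: series inversion giving \eqref{eq:devmu}.} From \eqref{eq:assolv} I compute
\begin{align*}
\bv_2-\bv_1 &= 2\frak{a}_s\sqrt{\mu_s-\mu}+2\frak{c}_s(\mu_s-\mu)^{3/2}+\mathcal{O}(\mu_s-\mu)^2,\\
\bv_3-\bv_2 &= \h_s-\frak{a}_s\sqrt{\mu_s-\mu}-(\frak{b}_s+\frak{p}_s)(\mu_s-\mu)-\frak{c}_s(\mu_s-\mu)^{3/2}+\mathcal{O}(\mu_s-\mu)^2,
\end{align*}
and form the ratio. This yields $\rho$ as a power series in $\sqrt{\mu_s-\mu}$ with leading term $2\frak{a}_s\sqrt{\mu_s-\mu}/\h_s$. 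I then invert this power series, which is a purely algebraic manipulation (the leading coefficient is nonzero by Assumption~\ref{as:main}), keeping track of terms up to order $\rho^3$, to obtain \eqref{eq:devmu}. Squaring and taking reciprocals then yields the two displayed corollaries for $\mu_s-\mu$ and $1/\sqrt{\mu_s-\mu}$.

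\textbf{Step 2: the expansions \eqref{eq:devvi} of the roots and \eqref{eq:devgradvi} of their gradients.} These follow by direct substitution of the series for $\sqrt{\mu_s-\mu}$ (for \eqref{eq:devvi}) and of both series for $\sqrt{\mu_s-\mu}$ and $1/\sqrt{\mu_s-\mu}$ (for \eqref{eq:devgradvi}) into \eqref{eq:assolv}--\eqref{eq:assolgradv}. For $\bv_3$ I simply rewrite $\frak{p}_s(\mu_s-\mu)=(\frak{p}_s\h_s^2/(4\frak{a}_s^2))(\rho^2-\rho^3)+\mathcal{O}(\rho^4)$. For the gradients $\nabla\bv_1,\nabla\bv_2$, I collect the singular $1/\rho$ contribution coming from the $1/\sqrt{\mu_s-\mu}$ term, the $\rho^0$ contribution built from combining the $\frak{a}_s/h_s$ correction of $1/\sqrt{\mu_s-\mu}$ with the regular part $-\frak{b}_s\bV_s-(\frak{a}_s^2/2)\bW_s$, and the $\rho^1$ contribution which merges the $\rho$-order piece of $1/\sqrt{\mu_s-\mu}$ times $(\frak{a}_s/2)\bV_s$ with the $\mathcal{O}(\sqrt{\mu_s-\mu})$ term of \eqref{eq:assolgradv} evaluated at $\sqrt{\mu_s-\mu}\sim\h_s\rho/(2\frak{a}_s)$. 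This is purely bookkeeping.

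\textbf{Step 3: the expansion \eqref{eq:devgradrho} of $\nabla\rho$.} This is the main obstacle, since several cancellations between singular terms must be tracked. I proceed via the quotient rule
\[
\nabla\rho=\frac{\nabla(\bv_2-\bv_1)}{\bv_3-\bv_2}-\rho\,\frac{\nabla(\bv_3-\bv_2)}{\bv_3-\bv_2}.
\]
The combination $\nabla\bv_2-\nabla\bv_1$ already has its regular part $-\frak{b}_s\bV_s-(\frak{a}_s^2/2)\bW_s$ cancel between the two terms in \eqref{eq:assolgradv}, leaving a singular $1/\sqrt{\mu_s-\mu}$ leading term together with an $\mathcal{O}(\sqrt{\mu_s-\mu})$ correction whose coefficients I read off directly. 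Dividing by $\bv_3-\bv_2=\h_s+\mathcal{O}(\rho)$ (itself a geometric series in $\rho$ obtained from \eqref{eq:devvi}) and substituting the series for $1/\sqrt{\mu_s-\mu}$ and $\sqrt{\mu_s-\mu}$ from Step~1 produces the $1/\rho$ and $\rho^0$ terms in \eqref{eq:devgradrho} as well as the $\rho\bW_s$ and $\rho\bZ_s$ contributions. The second term of the quotient rule, at leading order $\rho\nabla\bv_3/\h_s=\rho\frak{p}_s\bV^s/\h_s+\mathcal{O}(\rho^2)$, supplies the $\bV^s$-contribution, while the subdominant parts of $\nabla\bv_2$ entering through $-\rho\nabla(\bv_3-\bv_2)/(\bv_3-\bv_2)$ together with the expansion of $1/(\bv_3-\bv_2)=\h_s^{-1}(1+\mathcal{O}(\rho))$ add further $\rho\bV_s$ contributions that must be aggregated into the bracket $-\bigl[\frac{\frak{a}_s^2}{\h_s^2}(\tfrac{2}{\rho}+3)+\frac{1}{\h_s}(2\frak{b}_s+\frak{p}_s+\tfrac{4\frak{c}_s}{\frak{a}_s}+\tfrac{3\frak{a}_s^2}{2\h_s})\rho\bigr]\bV_s$ of \eqref{eq:devgradrho}. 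The computation is straightforward but the coefficients of $\rho\bV_s$ combine contributions from essentially every term produced in Step~1, so the main care needed is in not dropping any such term. Once this is done, the proposition is fully established.
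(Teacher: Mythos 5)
Your proposal is correct and follows essentially the same route as the paper's proof: form the ratio $\rho=(\bv_2-\bv_1)/(\bv_3-\bv_2)$ from \eqref{eq:assolv}, invert the resulting series to obtain \eqref{eq:devmu} (hence $\mu_s-\mu$ and $1/\sqrt{\mu_s-\mu}$), and substitute back into \eqref{eq:assolv}--\eqref{eq:assolgradv} for the remaining expansions. The only cosmetic difference is in the last step, where the paper organizes the computation of $\nabla\rho$ through $\nabla\rho/\rho=\nabla(\bv_2-\bv_1)/(\bv_2-\bv_1)-\nabla(\bv_3-\bv_2)/(\bv_3-\bv_2)$, which is algebraically the same quotient-rule identity you use.
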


\begin{proof}
From \eqref{eq:assolv} in Proposition~\ref{prop:asvgradv}, we derive
$$\rho\,=\,\frac{2\frak{a}_s\,\sqrt{\mu_s-\mu}\,+\, 2\frak{c}_s \,(\mu_s-\mu)^{3/2}\,+\,{\mathcal O}(\mu_s-\mu)^2}{\h_s\,-\,\frak{a}_s\,\sqrt{\mu_s-\mu}\,-\,(\frak{b}_s+\frak{p}_s)\,(\mu_s-\mu)\,+\,{\mathcal O}(\mu_s-\mu)^{3/2}}\,, $$
which yields
$$
\rho\,=\,
\frac{2\frak{a}_s}{\h_s}\Big(\sqrt{\mu_s-\mu}
+\frac{\frak{a}_s}{\h_s}({\mu_s-\mu})
+\Big(\frac{\frak{b}_s+\frak{p}_s}{\h_s}+\frac{\frak{a}_s^2}{\h_s^2}+\frac{\frak{c}_s}{\frak{a}_s}\Big)({\mu_s-\mu})^{3/2}
+{\mathcal O}(\mu_s-\mu)^{2}\Big)\,.
$$
From this we can derive an expansion of $\sqrt{\mu_s-\mu}$ in powers of $\rho$. We find successively
$$
\begin{array}{rcl}
\sqrt{\mu_s-\mu}&=&\displaystyle
\frac{\h_s}{2\frak{a}_s}\,\rho\,+\,{\mathcal O}(\rho^2)\,,\\[10pt]
\sqrt{\mu_s-\mu}&=&\displaystyle
\frac{\h_s}{2\frak{a}_s}\,(\rho \,-\,\frac12 \rho^2)\,+\,{\mathcal O}(\rho^3)\,,
\end{array}
$$
and finally \eqref{eq:devmu}, from which we infer expansions of $\mu_s-\mu$ and $1/\sqrt{\mu_s-\mu}$. Remaining expansions are then obtained by substitution in \eqref{eq:assolv}-\eqref{eq:assolgradv}.

For the convenience of the reader, let us provide some computational details on the derivation of \eqref{eq:devgradrho}. It follows from
$$
\frac{\nabla\rho}{\rho}\,=\,
\frac{\nabla(\bv_2-\bv_1)}{\bv_2-\bv_1}\,-\,\frac{\nabla(\bv_3-\bv_2)}{\bv_3-\bv_2}$$
by using that 
$$
\begin{array}{rcl}
\nabla(\bv_2-\bv_1)&=&\displaystyle
-\Big[\frac{\frak{a}_s^2}{\h_s}\,\left(\frac{2}{\rho}+1\right)\,\,+\,\frac{1}{2}\,\Big(\frak{b}_s+\,\frak{p}_s
\,+\,\frac{8\frak{c}_s\h_s}{\frak{a}_s}\Big)\,\rho\Big]\,\bV_s
\\[5pt]
&&\displaystyle
\qquad\qquad
\,-\, \frac32\h_s\frak{b}_s\,\rho\, \bW_s
\,-\,\frac{\h_s\frak{a}_s^2}{4}\,\rho\, \bZ_s\,+\,{\mathcal O}(\rho^2)\,,\\[10pt]
\displaystyle
\frac{1}{\bv_2-\bv_1}
&=&\displaystyle
\frac{1}{\h_s\rho}\Big(
1+\frac12\rho+\frac{(\frak{b}_s+\frak{p}_s)\,\h_s}{4\frak{a}_s^2}\rho^2
\Big)\,+\,{\mathcal O}(\rho^2)\,,\\[10pt]
\nabla(\bv_3-\bv_2)&=&\displaystyle
\frac{\frak{a}_s^2}{\rho\,\h_s}\,\bV_s\,+\,\big(\frak{b}_s+\frac{\frak{a}_s^2}{2\h_s}\big)\,\bV_s\,+\,\frac{\frak{a}_s^2}{2}\,\bW_s\,+\,\frak{p}_s\,\bV^s\,+\,{\mathcal O}(\rho)\,,\\[10pt]
\displaystyle
\frac{1}{\bv_3-\bv_2}
&=&\displaystyle
\frac{1}{\h_s}\Big(1+\frac{1}{2}\rho\Big)\,+\,{\mathcal O}(\rho^2)\,.
\end{array}
$$
\end{proof}

Again, arguments used to prove Proposition~\ref{prop:asactionperiod} - essentially Taylor expansions of integrands and repeated use of Proposition~\ref{prop:asol} - enable us to justify the form of the asymptotic expansion of $\nabla^2\Action$. Indeed from
$$
\begin{array}{rcccc}
\nabla^2\Action&=&\displaystyle
\int_{0}^{1}\frac{\nabla\fc}{\sqrt{1-\sigma}}\,\frac{\dif\sigma}{\sqrt{\sigma(\sigma+\rho)}}
&+&\displaystyle
\int_{0}^{1}\frac{\fc_\bw}{\sqrt{1-\sigma}}\,\frac{\dif\sigma}{\sqrt{\sigma(\sigma+\rho)}}\,\otimes\, \nabla\bv_1
\\[15pt]
&+&\displaystyle
\int_{0}^{1}\frac{\fc_\bz}{\sqrt{1-\sigma}}\,\frac{\dif\sigma}{\sqrt{\sigma(\sigma+\rho)}}\,\otimes\, \nabla\bv_2
&+&\displaystyle
\int_{0}^{1}
\frac{\fc_\bzc}{\sqrt{1-\sigma}}\,\frac{\dif\sigma}{\sqrt{\sigma(\sigma+\rho)}}\,\otimes\, \nabla\bv_3\\[15pt]
&+&\displaystyle
\int_{0}^{1}\frac{\fc_\bv}{\sqrt{1-\sigma}}\,\frac{(1-\sigma)\,\dif\sigma}{\sqrt{\sigma(\sigma+\rho)}}\,\otimes\, \nabla\bv_2
&+&\displaystyle
\int_{0}^{1}\frac{\fc_\bv}{\sqrt{1-\sigma}}\,\frac{\sigma\,\dif\sigma}{\sqrt{\sigma(\sigma+\rho)}}\,\otimes\, \nabla\bv_3
\\[15pt]
&-&\displaystyle
\frac12\,\int_{0}^{1}\frac{\fc}{\sqrt{1-\sigma}}\,\frac{\dif\sigma}{\sqrt{\sigma(\sigma+\rho)^3}}\,\otimes\, \nabla\rho
&&
\end{array}
$$
follows the existence of an expansion in $\rho^k$, $k\geq-2$, and $\rho^k\,\ln\rho$, $k\geq-1$. However, as in the harmonic limit, we find mind more convenient to derive coefficients of the expansion by differentiating a similar expansion for $\nabla\Action$.

As the process will be carried out at a relatively high order, it may be useful to explain it first in a rather abstract form. To do so, we use notation for asymptotic expansions explicitly described in Appendix~\ref{app:elas} and rely implicitly on Lemma~\ref{lem:derasexp}. We have already seen in the proof of Proposition~\ref{prop:assol} that 
$$\nabla \rho \sim \sum_{k\geqslant -1} \bY_{k+1}\,\rho^k$$
with
$$
\begin{array}{l}\displaystyle
\bY_0= -2\,\frac{\frak{a}_s^2}{\h_s^2}\,\bV_s\,,
\qquad\qquad
\bY_1=-3\,\frac{\frak{a}_s^2}{\h_s^2}\,\bV_s\,,\\[10pt]
\displaystyle
\bY_2=
-\frac{1}{\h_s}\,
\Big(2\frak{b}_s\,+\,\frak{p}_s\,+\,
\frac{4\frak{c}_s}{\frak{a}_s}
\,+\,\frac{3\frak{a}_s^2}{2\h_s}\Big)\,\bV_s
\,-\,\frac{\frak{p}_s}{\h_s}\,\bV^s\,-\,\frac12\,\Big(3\frak{b}_s+\frac{\frak{a}_s^2}{\h_s}\Big)\,\bW_s
\,-\,\frac{\frak{a}_s^2}{4}\,\bZ_s\,.
\end{array}
$$
Likewise we shall obtain for $\nabla\Action$ an expansion of the form
$$\nabla\Action \sim \sum_{k\geqslant 0} \Big( \bA_k\,\rho^k\,\ln\rho\,+\,\bB_k\,\rho^k\Big)\,,$$
where the vectors $\bA_k$ and $\bB_k$ depend on $(\speed,\blambda)$ \emph{per se}, through the 
solitary wave endstate $\bv_s$, and also through the solitary wave 
maximum $\bv^s$ as regards the $\bB_k$'s. From the latter we infer 
$$
\begin{array}{r}\displaystyle
\nabla^2\Action \sim \left(\sum_{k\geqslant -1} (k+1)\bA_{k+1}\,\rho^k\,\ln\rho\,+\,(\bA_{k+1}+(k+1)\bB_{k+1})\,\rho^k\right)\otimes \left(\sum_{k\geqslant -1} \bY_{k+1}\,\rho^k\right)\\[5pt]\displaystyle
\,+\,
\sum_{k\geqslant 0} \Big(\nabla_{\tot}\bA_k\,\rho^k\,\ln\rho\,+\,\nabla_{\tot}\bB_k\,\rho^k\Big)
\end{array}
$$
where, in order to avoid too heavy formulas at this stage, we have denoted  by $\nabla_{\tot}$ the `total gradient' with respect to $(\mu,\blambda,\speed)$, which involves the regular gradient $\nabla$ but also $\nabla\bv_s$ and $\nabla\bv^s$  by the chain rule. By collecting terms, this indeeds provides coefficients of the expansion
$$\nabla^2\Action \sim \bD_{0}\,\rho^{-2}\,+\,\sum_{k\geqslant -1} \Big(\bC_{k+2}\,\rho^k\,\ln\rho\,+\,\bD_{k+2}\,\rho^k\Big)\,.$$
For instance, 
$$
\begin{array}{l}\displaystyle
\bD_0=\bA_0\otimes \bY_{0}\,,\qquad
\bD_1=\bA_0\otimes \bY_1\,+\,(\bA_1+\bB_1)\otimes \bY_0\,,\\[5pt]\displaystyle
\bC_{1}=\bA_1\otimes \bY_0\,,\qquad
\bC_2=\bA_1\otimes \bY_1\,+\,2\,\bA_2\otimes \bY_0\,+\,\nabla_{\tot}\bA_0\,,\\[5pt]
\bD_2=\bA_0\otimes \bY_2\,+\,(\bA_1+\bB_1)\otimes \bY_1\,+\,(\bA_2+2\bB_2)\otimes \bY_0\,+\,\nabla_{\tot}\bB_0\,.
\end{array}
$$

Now we turn to the derivation of an asymptotic expansion for $\nabla\Action$, that is, to the explicit computation of $\bA_k$, $\bB_k$, $k\geq0$. The first step is to expand $\fc$ evaluated at $\bv=\bv_2+\sigma(\bv_3-\bv_2)$, $\bw=\bv_1$, $\bz=\bv_2$, and $\bzc=\bv_3$. To begin with, observe that, since $\bv_3$ is within a distance ${\mathcal O}(\rho^2)$ from $\bv^s$,
$$
\fc\begin{array}[t]{l}=\fc^s\,+\,(\bv_1-\bv_s)\,\fc^s_\bw\,+\,\,+\,(\bv_2-\bv_s)\,\fc^s_\bz\,+\,(\bv_3-\bv^s)\,\fc^s_\bzc
\,+\,((1-\sigma)(\bv_2-\bv_s)\,+\,\sigma(\bv_3-\bv^s))\,\fc^s_\bv\\ [10pt]
\quad
\,+\,\frac12 (\bv_1-\bv_s)^2\,\fc^s_{\bw\bw}\,+\,\frac12 (\bv_2-\bv_s)^2\,\fc^s_{\bz\bz}\,+\,\frac12 (1-\sigma)^2(\bv_2-\bv_s)^2\,\fc^s_{\bv\bv}
\,+\,(\bv_1-\bv_s)(\bv_2-\bv_s)\,\fc^s_{\bw\bz}
\\ [10pt]
\quad\,+\,(1-\sigma)\,(\bv_1-\bv_s)(\bv_2-\bv_s)\,\fc^s_{\bw\bv}\,+\,(1-\sigma)(\bv_2-\bv_s)^2\,\fc^s_{\bz\bv}\,
\,+\,{\mathcal O}(\rho^3)\,,\end{array}$$
where the superscript $s$ means that functions are evaluated at 
$\bv=\bv_s+\sigma(\bv^s-\bv_s)$, $\bw=\bv_s$, $\bz=\bv_s$, and $\bzc=\bv^s$. When we need to evaluate functions at $\bv=\bv_s$, which corresponds to $\sigma=0$, we will add $0$ as a subscript, consistently with convention of Remark~\ref{rem:superscripts} in Section~\ref{ss:actionperiod}. By replacing the $\bv_i$'s with their expansions \eqref{eq:devvi} we get a genuine expansion for $\fc$,
\begin{equation}\label{eq:devfc-first}
\fc\begin{array}[t]{l}=
\fc^s\,+\,\dfrac{\rho\,\h_s}{2}\,(-\,\fc^s_\bw\,+\,\fc^s_\bz\,+\,(1-\sigma)\,\fc^s_\bv)\\ [10pt]
\ \,+\,\dfrac{\rho^2\,\h_s^2}{4}\,\Big(
-\dfrac{\frak{p}_s}{\frak{a}_s^2}\,\Big(\fc^s_\bzc+\sigma \fc^s_\bv\Big)\,+\,\dfrac{\frak{2b}_s}{\frak{a}_s^2}\,\fc^s_\bw
\,-\,\Big(\dfrac{1}{\h_s}-\dfrac{\frak{b}_s}{\frak{a}_s^2}\Big)\,(-\fc^s_\bw+\fc^s_\bz+(1-\sigma)\fc^s_\bv)
\Big)\\ [10pt]
\ \,+\,\dfrac{\rho^2\,\h_s^2}{4}\,\Big( 
\frac12 \fc^s_{\bw\bw}\,+\,\frac12 \fc^s_{\bz\bz}\,+\,\frac12 (1-\sigma)^2 \fc^s_{\bv\bv}
\,-\,\fc^s_{\bw\bz}\,+\,(1-\sigma)\,(\fc^s_{\bz\bv}\,-\,\fc^s_{\bw\bv})
\Big)
\,+\,{\mathcal O}(\rho^3)\,.\end{array}
\end{equation}

There are some simplifications to be made by using the relationship between $\fc$ and $\f$ together with the symmetries of $\f^s$. First, since 
$$\fc(\bv,\bw,\bz,\bzc;\speed,\blambda)=\sqrt{\frac{\bzc-\bv}{\bzc-\bz}}\;\f(\bv,w,z;\speed,\blambda)\,,$$
by differentiation we find that
$$
\left\{\begin{array}{l}\displaystyle
\fc_\bv\,=\,\sqrt{\frac{\bzc-\bv}{\bzc-\bz}}\;\f_\bv\,-\,\frac{\f}{2\sqrt{(\bzc-\bv)(\bzc-\bz)}\;}\,,\qquad\qquad
\fc_\bw\,=\,\sqrt{\frac{\bzc-\bv}{\bzc-\bz}}\;\f_\bw\,,\\ [15pt]
\displaystyle
\fc_\bz\,=\,\sqrt{\frac{\bzc-\bv}{\bzc-\bz}}\;\f_\bz\,+\,\frac{\sqrt{\bzc-\bv}}{(\bzc-\bz)^{3/2}}\;\,\frac{\f}{2}\,,
\qquad\qquad
\fc_\bzc\,=\,\left(\frac{1}{\sqrt{(\bzc-\bv)(\bzc-\bz)}}\,-\,\frac{\sqrt{\bzc-\bv}}{(\bzc-\bz)^{3/2}}\right)\,\frac{\f}{2}\,.
\end{array}\right.
$$
When evaluated at $\bv=\bv_2+\sigma(\bv_3-\bv_2)$, $\bw=\bv_1$, $\bz=\bv_2$, and 
$\bzc=\bv_3$, these relations yields
\begin{equation}\label{eq:simpf}
\left\{\begin{array}{l}\displaystyle
\frac{\fc_\bv}{\sqrt{1-\sigma}}\,=\,\f_\bv\,-\,\frac{\f}{2(1-\sigma)(\bv_3-\bv_2)}\,,\qquad\qquad
\frac{\fc_\bw}{\sqrt{1-\sigma}}\,=\,\f_\bw\,,
\\ [15pt]
\displaystyle
\frac{\fc_\bz}{\sqrt{1-\sigma}}\,=\,\f_\bz\,+\,\frac{\f}{2(\bv_3-\bv_2)}\,,\qquad\qquad
\frac{\fc_\bzc}{\sqrt{1-\sigma}}\,=\,\left(\frac{1}{1-\sigma}\,-\,1\right)\,\frac{\f}{2(\bv_3-\bv_2)}\,.
\end{array}\right.
\end{equation}
In a similar way, by differentiating once more, we obtain
$$
\left\{\begin{array}{l}\displaystyle
\fc_{\bv\bv}\,=\,\sqrt{\frac{\bzc-\bv}{\bzc-\bz}}\;\f_{\bv\bv}\,-\,\frac{\f_\bv}{\sqrt{(\bzc-\bv)(\bzc-\bz)}\;}\,-\,\frac{\f}{4\sqrt{(\bzc-\bv)^3(\bzc-\bz)}\;}\,,
\\ [20pt]\displaystyle 
\fc_{\bv\bw}\,=\,\sqrt{\frac{\bzc-\bv}{\bzc-\bz}}\;\f_{\bv\bw}\,-\,\frac{\f_\bw}{2\sqrt{(\bzc-\bv)(\bzc-\bz)}\;}\,,
\qquad\qquad\qquad
\fc_{\bw\bw}\,=\,\sqrt{\frac{\bzc-\bv}{\bzc-\bz}}\;\f_{\bw\bw}\,,
\\ [20pt]\displaystyle 
\fc_{\bv\bz}\,=\,\sqrt{\frac{\bzc-\bv}{\bzc-\bz}}\;\f_{\bv\bz}\,-\,\frac{\f_\bz}{2\sqrt{(\bzc-\bv)(\bzc-\bz)}\;}\,+\,\frac{\sqrt{\bzc-\bv}}{(\bzc-\bz)^{3/2}}\,\frac{\f_\bv}{2}\,-\,\frac{\f}{4\sqrt{(\bzc-\bv)(\bzc-\bz)^3}\;}\,,
\\ [20pt]\displaystyle
\fc_{\bw\bz}\,=\,\sqrt{\frac{\bzc-\bv}{\bzc-\bz}}\;\f_{\bw\bz}\,+\,\frac{\sqrt{\bzc-\bv}}{(\bzc-\bz)^{3/2}}\;\,\frac{\f_\bw}{2}\,,\\[20pt]
\displaystyle
\fc_{\bz\bz}\,=\,\sqrt{\frac{\bzc-\bv}{\bzc-\bz}}\;\f_{\bz\bz}\,+\,\frac{\sqrt{\bzc-\bv}}{(\bzc-\bz)^{3/2}}\;\,{\f_\bz}\,+\,\frac34\frac{\sqrt{\bzc-\bv}}{(\bzc-\bz)^{5/2}}\;\,\f\,,
\end{array}\right.
$$
hence after evaluation at $\bv=\bv_2+\sigma(\bv_3-\bv_2)$, $\bw=\bv_1$, $\bz=\bv_2$, and $\bzc=\bv_3$
\begin{equation}\label{eq:simpfder}
\left\{\begin{array}{l}\displaystyle
\frac{\fc_{\bv\bv}}{\sqrt{1-\sigma}}\,=\,\f_{\bv\bv}\,-\,\frac{\f_\bv}{(1-\sigma)\,(\bv_3-\bv_2)\;}\,-\,\frac{\f}{4(1-\sigma)^2\,(\bv_3-\bv_2)^2\;}\,,
\\[20pt]
\displaystyle
\frac{\fc_{\bv\bw}}{\sqrt{1-\sigma}}\,=\,\f_{\bv\bw}\,-\,\frac{\f_\bw}{2(1-\sigma)\,(\bv_3-\bv_2)\;}\,,\qquad\qquad
\frac{\fc_{\bw\bw}}{\sqrt{1-\sigma}}\,=\,\displaystyle  \f_{\bw\bw}\,,
\\ [20pt]
\displaystyle
\frac{\fc_{\bv\bz}}{\sqrt{1-\sigma}}\,=\,\f_{\bv\bz}\,-\,\frac{\f_\bz}{2(1-\sigma)\,(\bv_3-\bv_2)\;}\,+\,\frac{\f_\bv}{2(\bv_3-\bv_2)}\,-\,\frac{\f}{4(1-\sigma)\,(\bv_3-\bv_2)^2\;}\,,
\\ [20pt]
\displaystyle
\frac{\fc_{\bw\bz}}{\sqrt{1-\sigma}}\,=\,\f_{\bw\bz}\,+\,\frac{\f_\bw}{2\,(\bv_3-\bv_2)}\,,
\qquad\ 
\frac{\fc_{\bz\bz}}{\sqrt{1-\sigma}}\,=\,\f_{\bz\bz}\,+\,\frac{\f_\bz}{\bv_3-\bv_2}\,+\,\frac{3\,\f}{4\,(\bv_3-\bv_2)^2}\,.
\end{array}\right.
\end{equation}
Now the symmetry of $\f$ inherited from that of $\Redpot$ - see Lemma~\ref{lem:symmetry} - implies that $\f^s_{\bw}\,=\,\f^s_{\bz}$, $\f^s_{\bv\bw}=\f^s_{\bv\bz}$, $\f^s_{\bw\bw}=\f^s_{\bz\bz}$, so that \eqref{eq:simpf} and \eqref{eq:simpfder} imply
$$
\begin{array}{l}\displaystyle
\frac{-\,\fc^s_\bw\,+\,\fc^s_\bz\,+\,(1-\sigma)\,\fc^s_\bv}{\sqrt{1-\sigma}}\,=\,(1-\sigma)\,\f^s_\bv\,,
\qquad\qquad
\frac{\fc^s_\bzc+\sigma\,\fc^s_\bv}{\sqrt{1-\sigma}}\,=\,\sigma\,
\f^s_\bv\,,
\\[5pt]
\displaystyle
\frac{\fc^s_\bz+(1-\sigma)\,\fc^s_\bv}{\sqrt{1-\sigma}}\,=\,\f^s_\bz+(1-\sigma)\,\f^s_\bv\,,
\qquad\qquad
\frac{\fc^s_\bw}{\sqrt{1-\sigma}}\,=\,\f^s_\bw\,=\,\f^s_\bz\,,
\end{array}
$$
and
$$
\begin{array}{r}\displaystyle
\frac{1}{\sqrt{1-\sigma}}\,\Big(\tfrac12 \fc^s_{\bw\bw}\,+\,\tfrac12 \fc^s_{\bz\bz}\,+\,\tfrac12 (1-\sigma)^2 \fc^s_{\bv\bv}
\,-\,\fc^s_{\bw\bz}\,+\,(1-\sigma)\,(\fc^s_{\bz\bv}\,-\,\fc^s_{\bw\bv})\Big)\qquad\qquad
\\[5pt]
\displaystyle
\,=\,\f^s_{\bz\bz}\,+\,\tfrac12 (1-\sigma)^2 \f^s_{\bv\bv}\,-\,\f^s_{\bw\bz}\,.
\end{array}
$$
By substituting these expressions for those appearing in \eqref{eq:devfc-first} we eventually arrive at
\begin{equation}\label{eq:devfc}
\frac{\fc}{\sqrt{1-\sigma}}\begin{array}[t]{l}=
\f^s\,+\,\dfrac{\rho\,\h_s}{2}\,(1-\sigma)\,\f^s_\bv\\ [10pt]
\quad\,+\,\dfrac{\rho^2\,\h_s^2}{4}\,\Big(
-\Big(\sigma\,\dfrac{\frak{p}_s}{\frak{a}_s^2}\,+\,(1-\sigma)\,\Big(\dfrac{1}{\h_s}-\dfrac{\frak{b}_s}{\frak{a}_s^2}\Big)\Big)\,\f^s_\bv\,+\,
\dfrac{2\frak{b}_s}{\frak{a}_s^2}\,\f^s_\bz
\Big)\\ [10pt]
\quad
\,+\,\dfrac{\rho^2\,\h_s^2}{4}\,\Big( 
\f^s_{\bz\bz}\,+\,\frac12 (1-\sigma)^2 \f^s_{\bv\bv}
\,-\,\f^s_{\bw\bz}\Big)
\,+\,\dfrac{{\mathcal O}(\rho^3)}{\sqrt{1-\sigma}}\,.\end{array}
\end{equation}

Now, to proceed, we invoke the crucial Proposition~\ref{prop:asol}. Stated in rather abstract words, it provides us with coefficients of asymptotic expansions
$$\int_{0}^{1} \frac{g(x)}{\sqrt{x(x+\rho)}}\,\dif x\sim\sum_{k\geqslant 0} (a_k[g] \rho^k\,\ln\rho \,+\,b_k[g]\, \rho^k)\,,$$
where the brackets $[\cdot]$ are here to emphasize the function to which the expansion corresponds.  As a consequence, by combining with \eqref{eq:devfc}, we receive coefficients of the expansion of $\nabla\Action$. Namely, we find that the first vectors $\bA_k$'s are
$$
\begin{array}{l}\displaystyle
\bA_0\,=\,a_0[\f^s]\,=\,\,-\f^s_0\,=\,-\,\Y^s_0\,\bV_s\,,\\[5pt]
\displaystyle
\bA_1\,=\,a_0[\tfrac{\h_s}{2}\,(1-\sigma)\,\f^s_\bv]\,+\,a_1[\f^s]\,=\,-\,\tfrac{\h_s}{2}\,\f^s_{\bv,0}\,+\,\tfrac{1}{2}\,(\f^s)'(0)\,=\,0\,,
\end{array}$$
and
$$
\begin{array}[t]{rcl}
\bA_2&=&
\displaystyle 
\dfrac{\h_s^2}{4}\,a_0\left[-\Big(\sigma\,\dfrac{\frak{p}_s}{\frak{a}_s^2}\,+\,(1-\sigma)\,\Big(\dfrac{1}{\h_s}-\dfrac{\frak{b}_s}{\frak{a}_s^2}\Big)\Big)\,\f^s_\bv\,+\,
\dfrac{2\frak{b}_s}{\frak{a}_s^2}\,\f^s_\bz\right]
\\[10pt]
&&\displaystyle
\quad\,+\,\dfrac{\h_s^2}{4}\,a_0\left[\f^s_{\bz\bz}\,+\,\frac12 (1-\sigma)^2 \f^s_{\bv\bv}\,-\,\f^s_{\bw\bz}\right]
\,+\,\frac{\h_s}{2}\,a_1[(1-\sigma)\,\f^s_\bv]\,+\,a_2[\f^s]\\ [15pt]
&=&
\displaystyle
\dfrac{\h_s^2}{4}\Big(\dfrac{1}{\h_s}-\dfrac{\frak{b}_s}{\frak{a}_s^2}\Big)\,\f^s_{\bv,0}\,-\,
\dfrac{\h_s^2\frak{b}_s}{2\frak{a}_s^2}\,\f^s_{\bz,0}\,-\, \dfrac{\h_s^2}{4} \f^s_{\bz\bz,0}\,-\,\dfrac{\h_s^2}{8} \f^s_{\bv\bv,0}\\[10pt]
&&
\displaystyle
\quad\,+\,\dfrac{\h_s^2}{4} \f^s_{\bw\bz,0}
\,+\,\frac{\h_s}{4}\,((\f^s_\bv)'(0)\,-\,\f^s_{\bv,0})\,-\,\frac{3}{16}\,(\f^s)''(0)\\[15pt]
&=&
\displaystyle
-\,\dfrac{\h_s^2}{4}\,\left(\dfrac{\frak{b}_s}{\frak{a}_s^2}\,\f^s_{\bv,0}\,+\,
\dfrac{2\frak{b}_s}{\frak{a}_s^2}\,\f^s_{\bz,0}\,+\,  \f^s_{\bz\bz,0}\,-\, \f^s_{\bw\bz,0}\,+\, \frac{1}{4} \f^s_{\bv\bv,0}\right)\,,
\end{array}$$
where $'$ denotes the differentiation with respect to $\sigma$ and we have used that $(\g^s)'\,=\,\h_s\,\g^s_\bv$, for any $\g$.  We recall that the subscript $0$ stands for evaluation at $\sigma=0$. As for the first $\bB_k$'s, they read
$$\bB_0\,=\,b_0[\f^s]\,=\,\f^s_0\,\ln 4\,+\,\int_{0}^{1}\,\frac{\f^s-\f^s_0}{\sigma}\,\dif\sigma\,,$$
$$
\begin{array}[t]{rcl}
\bB_1&=&
\displaystyle 
b_0[\tfrac{\h_s}{2}\,(1-\sigma)\,\f^s_\bv]\,+\,b_1[\f^s]\\[10pt]
&=&\displaystyle
\frac{1}{2}\,(\f^s)'(0)\,\ln 4
\,+\,\frac{1}{2}\,\int_{0}^{1}\,\frac{(1-\sigma)\,(\f^s)'(\sigma)\,-\,(\f^s)'(0)}{\sigma}\,\dif\sigma
\\[5pt]
&&\displaystyle
\quad\,+\,\frac{1}{2}\left(\f^s_0+(1-\ln 4)\,(\f^s)'(0)\right)
-\,\frac{1}{2}\,
\int_{0}^{1}\,\frac{\f^s\,-\,\f^s_0\,-\,\sigma\,(\f^s)'(0)}{\sigma^2}\,\dif\sigma\\[10pt]
&=&
\displaystyle
\frac{1}{2}\left(\f^s_0+(\f^s)'(0)\right) -\,\displaystyle\frac12\,
\int_{0}^{1}\,\frac{\f^s\,-\,\f^s_0\,-\,\sigma\,(1-\sigma)\,(\f^s)'(\sigma)}{\sigma^2}\,\dif\sigma\\[10pt]
&=&
\displaystyle
\frac{1}{2}\left(\f^s_0+(\f^s)'(0)\right)\,+\,\frac{1}{2}\left[(1-\sigma)\,\frac{\f^s(\sigma)\,-\,\f^s(0)}{\sigma} \right]_{\sigma=0}^{\sigma=1}\\[10pt]
&=&\displaystyle
\frac{1}{2}\f^s_0\,=\,\frac{1}{2}\,\Y^s_0\,\bV_s\,,
\end{array}$$
and
$$\begin{array}[t]{rcl}\bB_2&=&
\displaystyle \dfrac{\h_s^2}{4}\,b_0\left[-\Big(\sigma\,\dfrac{\frak{p}_s}{\frak{a}_s^2}\,+\,(1-\sigma)\,\Big(\dfrac{1}{\h_s}-\dfrac{\frak{b}_s}{\frak{a}_s^2}\Big)\Big)\,\f^s_\bv\,+\,
\dfrac{2\frak{b}_s}{\frak{a}_s^2}\,\f^s_\bz\right]\\[10pt]
&&\displaystyle
\quad\,+\,\dfrac{\h_s^2}{4}\,b_0\left[\f^s_{\bz\bz}\,+\,\tfrac12 (1-\sigma)^2 \f^s_{\bv\bv}\,-\,\f^s_{\bw\bz}\right]
\,+\,\frac{\h_s}{2}\,b_1[(1-\sigma)\,\f^s_\bv]\,+\,b_2[\f^s]\,.\\ [20pt]
\displaystyle
\\
\end{array}$$
We do not make the latter more explicit here because we eventually `only' need its contribution to the one `coefficient' of $\bD_2$ that we will compute and this contribution is shown to vanish by an argument independent of the precise form of $\bB_2$. 

We are now ready to prove the following.
\begin{theorem}\label{thm:ashess}\label{thm:ashessun}
With notational conventions \ref{not:derivatives}--\ref{not:UVWZ}--\ref{not:abcp}, under Assumption \ref{as:main}, the Hessian of the action has the asymptotic behavior 
\begin{equation}\label{eq:ashess}
\frac{\nabla^2\Action}{\Y^s_0}
\begin{array}[t]{l}\displaystyle\,=\,\frac{2\frak{a}_s^2}{\h_s^2}\,\frac{1+\rho}{\rho^2}\;\bV_s\otimes \,\bV_s
\,+\,\left( \alpha_s\,\bV_s\otimes \,\bV_s\,+\,\beta_s\,\left(\bV_s\otimes \bW_s\,+\,\bW_s\otimes \bV_s\right)\right)\,\ln\rho\\ [15pt]
\displaystyle\,+\left(\dfrac{\frak{a}_s^2}{2}\,\bW_s\otimes \bW_s\,+\,\bT_s\otimes \bT_s
\,+\,\frac{\frak{a}_s^2}{4}\,(\bZ_s\otimes \bV_s\,+\,\bV_s\otimes \bZ_s)\right) \ln\rho\\ [15pt]
\displaystyle\,+\;\bD_s\,+\, {\mathcal O}\big(\rho\ln\rho\big)
\end{array}
\end{equation}
when $\rho\,=\,{(\bv_2-\bv_1)}/{(\bv_3-\bv_2)}$ goes to zero, where 
$$
\begin{array}{rcl}
\alpha_s&:=&\displaystyle
\frac{1}{\Y^s_0}\,\Big(\,\frak{b}_s\,(\Y_{\bv,0}^s\,+\,2\,\Y_{\bz,0}^s)\,+\,\frak{a}_s^2\,\Big(\frac14\,\Y_{\bv\bv,0}^s\,+\,
\Y_{\bz\bz,0}^s\,-\,\Y_{\bw\bz,0}^s\Big)\Big)\,,\\[5pt]
\beta_s&:=&\displaystyle
\frak{b}_s\,+\,\frac{\frak{a}_s^2}{2}\,\frac{\Y_{\bv,0}^s}{\Y^s_0}\,
\end{array}
$$
may be computed explicitly in terms of derivatives of $\Potential$ and $\kappa$ at $\bv_s$ from \eqref{eq:redpot}-\eqref{eq:defY}, and the symmetric matrix $\bD_s$ is such that 
$$\bS_s\cdot \bD_s \,\bS_s\,=\,\Mom''\,,$$
where $\Mom''$ is the second order derivative of the Boussinesq momentum $\Mom$ with respect to the solitary wave speed $\speed$ at fixed endstate $\bU_s$ and 
$$\bS_s:=
\left(\begin{array}{c} -\,\impulse_s \\
\bB^{-1}\bU_s \\ -1\end{array}\right)\,.$$
\end{theorem}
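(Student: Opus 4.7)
The plan is to extract the expansion of $\nabla^2\Action$ from the already computed expansion $\nabla\Action\sim\sum_{k\geq0}(\bA_k\rho^k\ln\rho+\bB_k\rho^k)$ by term-by-term differentiation, using $\nabla\rho\sim\sum_{k\geq-1}\bY_{k+1}\rho^k$ from Proposition~\ref{prop:assol} together with the chain rule for the $(\speed,\blambda)$-dependence of $\bA_k,\bB_k$ through $\bv_s,\bv^s$. As outlined, this produces the structure $\bD_0\rho^{-2}+\sum_{k\geq-1}(\bC_{k+2}\rho^k\ln\rho+\bD_{k+2}\rho^k)$; what remains is to identify each coefficient with its counterpart in~\eqref{eq:ashess} and, crucially, to evaluate the projection of $\bD_s=\bD_2$ on $\bS_s\otimes\bS_s$.

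\emph{Coefficient matching.} Since $\bA_0=-\Y^s_0\bV_s$, $\bA_1=0$, $\bB_1=\tfrac12\Y^s_0\bV_s$, $\bY_0=-\tfrac{2\frak{a}_s^2}{\h_s^2}\bV_s$, $\bY_1=-\tfrac{3\frak{a}_s^2}{\h_s^2}\bV_s$ are all proportional to $\bV_s$, the divergent coefficients $\bD_0=\bA_0\otimes\bY_0$ and $\bD_1=\bA_0\otimes\bY_1+\bB_1\otimes\bY_0$ collapse to $\tfrac{2\frak{a}_s^2\Y^s_0}{\h_s^2}\bV_s\otimes\bV_s$, which combine into the factor $\tfrac{2\frak{a}_s^2\Y^s_0}{\h_s^2}\tfrac{1+\rho}{\rho^2}\bV_s\otimes\bV_s$. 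For the $\ln\rho$ coefficient, $\bC_1=\bA_1\otimes\bY_0=0$ and the leading contribution $\bC_2=2\bA_2\otimes\bY_0+\nabla_{\tot}\bA_0$ is computed by evaluating $\nabla_{\tot}\bA_0=\nabla_{\tot}(-\Y^s_0\bV_s)$ via the chain rule, using the saddle-point identity $\nabla\bv_s=-\tfrac{\frak{a}_s^2}{2}\bW_s$ (analogue of~\eqref{eq:nablavzero}), $\nabla_{\tot}\bV_s=\nabla\bv_s\otimes\bW_s-\bT_s\otimes\bT_s$, $\nabla_{\tot}\Y^s_0=\tfrac{\frak{a}_s^2}{4}\Y^s_0\bZ_s$ (analogue of~\eqref{eq:nablaYzero}), together with the soliton counterpart $\frak{b}_s\Y^s_{0}=\frak{a}_s^2\Y^s_{\bz,0}$ of~\eqref{eq:bYzzero}. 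Rearranging as in the proof of Theorem~\ref{thm:asharmhess} yields exactly the symmetric form claimed in~\eqref{eq:ashess}.

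\emph{Projection onto $\bS_s\otimes\bS_s$ and link with $\Mom''$.} The constant coefficient reads
\[\bD_s=\bA_0\otimes\bY_2+(\bA_1+\bB_1)\otimes\bY_1+(\bA_2+2\bB_2)\otimes\bY_0+\nabla_{\tot}\bB_0\,.\]
A direct computation from $\impulse_s=\tfrac12\bU_s\cdot\bB^{-1}\bU_s$ gives the key orthogonality $\bS_s\cdot\bV_s=0$; since $\bA_0$, $\bA_1+\bB_1$, and $\bY_0$ are all parallel to $\bV_s$ (or zero), the first three summands contribute nothing to $\bS_s\cdot\bD_s\bS_s$---in particular $\bB_2$ never enters---and hence $\bS_s\cdot\bD_s\bS_s=\bS_s\cdot\nabla_{\tot}\bB_0\cdot\bS_s$. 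To identify this with $\Mom''$, I would first establish, from~\eqref{eq:derTheta} and the definitions of $\bS_s$ and $\Impulse$, the exact pointwise identity
\[\bS_s\cdot\nabla\Action=-\int_{0}^{\Xi}\tfrac12(\ubU-\bU_s)\cdot\bB^{-1}(\ubU-\bU_s)\,\dif\xi\,.\]
Since $\bS_s\cdot\bA_0=0$, the logarithmic divergence cancels when one passes to the soliton limit; comparison with the integral representation of $\Mom'$ deducible from Remark~\ref{rk:def-Mom} (using the Euler--Lagrange equation for $\ubU^s$ to kill the profile-variation terms) gives $\bS_s\cdot\bB_0=-\Mom'(\speed)|_{\bU_s\text{ fixed}}$. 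A short computation from the saddle-point condition $\nabla_{\bU}(\Ham+\speed\Impulse)(\bU_s)+\blambda=0$ then shows that $-\bS_s$ is precisely the tangent vector to the curve $\{\bU_s=\mathrm{const}\}\subset\{\mu=\mu_s(\speed,\blambda)\}$ parametrized by~$\speed$; along this curve $\bS_s$ is \emph{constant} since its entries depend only on $\bU_s$. Differentiating the identity $\bS_s\cdot\bB_0=-\Mom'$ along this curve and using the constancy of $\bS_s$ therefore yields $\Mom''=\bS_s\cdot\nabla_{\tot}\bB_0\cdot\bS_s$, as desired.

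The main obstacle is the bookkeeping required to extract the symmetric form of the $\ln\rho$ coefficient, which entails several algebraic identities among derivatives of $\Potential$ and $\kappa$ at $\bv_s$. Everything else is essentially forced by the rank-one structure of $\bA_0,\bB_1,\bY_0,\bY_1$ combined with the single decisive orthogonality $\bS_s\perp\bV_s$: it is exactly this feature that makes $\bB_2$ irrelevant for the projection and ensures the finiteness of $\bS_s\cdot\nabla^2\Action\cdot\bS_s$ in the limit, consistently with the finiteness of $\Mom''$.
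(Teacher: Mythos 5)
Your proposal is correct, and for the expansion \eqref{eq:ashess} itself it follows the paper's own scheme (term-by-term differentiation of $\nabla\Action\sim\sum_k(\bA_k\rho^k\ln\rho+\bB_k\rho^k)$ against $\nabla\rho$, with $\bD_0$, $\bD_1$, $\bC_1$, $\bC_2$ assembled as in the text), so the genuine divergence is in how you reach $\bS_s\cdot\bD_s\,\bS_s=\Mom''$. The paper computes $\dif_\speed\Mom$ and then $\dif_\speed^2\Mom$ directly as integrals over the homoclinic in the $\sigma$-parametrization and matches the latter with the reduced form $\int_0^1\bS_s\cdot(\nabla\f^s+\sigma\,\f^s_\bv\otimes\nabla\bv^s)\bS_s\,\dif\sigma/\sigma$ of $\bS_s\cdot(\nabla_{\tot}\bB_0)\bS_s$, obtained via the orthogonalities of Lemma~\ref{lem:orthVWZT}. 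You instead use the exact identity $\bS_s\cdot\nabla\Action=-\int_0^\Xi\tfrac12(\ubU-\bU_s)\cdot\bB^{-1}(\ubU-\bU_s)\,\dif\xi$ (correct: it is \eqref{eq:derTheta} combined with $\bS_s=\Sb\bV_s$ and \eqref{def:Sb}), pass to the soliton limit --- legitimate, since $\bS_s\cdot\bA_0=0$ kills the $\ln\rho$ term while the integrand on the right vanishes quadratically at $\bv_s$, so the convergence is of the same nature as $\Action\to\Mom$ in Proposition~\ref{prop:asactionperiod} --- to get the scalar identity $\bS_s\cdot\bB_0=-\Mom'$ on $\Lambda$, and then differentiate it along the fixed-endstate curve $\speed\mapsto(\mu_s(\speed),\blambda_s(\speed),\speed)$, whose tangent is indeed $-\bS_s$ and along which $\bS_s$ is constant. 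This buys a shorter endgame: the explicit reduction of $\nabla_{\tot}\bB_0$ disappears, and both the irrelevance of $\bB_2$ and the finiteness of $\bS_s\cdot\nabla^2\Action\,\bS_s$ become transparent. The price is minor bookkeeping: one must check that $\nabla_{\tot}\bB_0$ in $\bD_2$ is the full $(\speed,\blambda)$-derivative of $\bB_0$ including the chain rule through $\bv_s,\bv^s$ (it is, by construction), that the $\mu$-component of $\bS_s$ is harmless because $\bB_0$ is independent of $\mu$, and you still need the same Euler--Lagrange computation of $\Mom'$ at fixed endstate that the paper performs for its first differentiation.

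Two slips in your sketch of the $\ln\rho$ coefficient should be fixed. The soliton analogue of \eqref{eq:nablaYzero} carries the opposite sign, $\nabla\Y^s_0=-\tfrac{\frak{a}_s^2}{4}\,\Y^s_0\,\bZ_s$ (see \eqref{eq:nablaY}), because $\Redpot^s_0=\tfrac12\partial_\bv^2\Potential(\bv_s;\speed,\blambda)=-1/\frak{a}_s^2$ is negative at the saddle; and it is an identity for $\nabla\Y^s_0$, not for $\nabla_{\tot}\Y^s_0$, which also contains $(\Y^s_{\bv,0}+2\Y^s_{\bz,0})\,\nabla\bv_s$ --- precisely the piece producing the $\bV_s\otimes\bW_s$ half of the $\beta_s$ term. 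Taken literally, your version would yield an antisymmetric $\bZ_s$-contribution and an incomplete $\beta_s$ (the symmetry of $\nabla^2\Action$ would flag it); with \eqref{eq:nablaY}, \eqref{eq:gradvs} and \eqref{eq:bYz} the computation closes exactly as in the statement, including the sign $+\bT_s\otimes\bT_s$ opposite to the harmonic case.
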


We recall that $\bT_s=0$ and $\bB^{-1}\bU_s=\bv_s$ in the case $N=1$.

\begin{remark}
The motivation for expanding $\nabla^2\Action$ up to the ${\mathcal O}\big(1\big)$ term $\D_s$ is that the expansion up to order $\ln\rho$ has rank $N+1$ (at best). Moreover, the computed $\bS_s\cdot \bD_s \bS_s$ is precisely the `coefficient' whose non vanishing ensures invertibility of the expansion up to order $1$ - when $\rho$ is sufficiently small. This will become clearer in Corollary \ref{cor:stabsol} hereafter.
\end{remark}

\begin{remark}
By definition of $\rho$ we see that
$$
\frac{1+\rho}{\rho^2}\,=\,\frac{(\bv_3-\bv_1)\,(\bv_3-\bv_2)}{(\bv_2-\bv_1)^2}$$
so that the principal part of the expansion of $\nabla^2\Action$ is just proportional to
$$\frac{(\bv_3-\bv_1)\,(\bv_3-\bv_2)}{(\bv_2-\bv_1)^2}\,\bV_s\otimes \,\bV_s\,.$$
\end{remark}

\begin{remark}
Remarkably enough, despite the scale separation, the expression of the term of order $\ln\rho$ in \eqref{eq:ashess} for the soliton limit  is analogous to the leading order term in \eqref{eq:asharmhess} for the harmonic limit, with the state $\bv_s$ replacing $\bv_0$.
\end{remark}

\begin{proof}
In terms introduced above we need to compute the first $\bC_k$, $\bD_k$. From foregoing computations we readily obtain the first three of them,
$$\bD_0=\bA_0\otimes \bY_{0}\,=\,2\,\Y^s_0\,\frac{\frak{a}_s^2}{\h_s^2}\,\bV_s\otimes \bV_s\,,
\qquad\qquad\qquad
\bC_{1}=\bA_1\otimes \bY_0\,=\,0\,,$$
and
$$\bD_1=\bA_0\otimes \bY_1\,+\,(\bA_1+\bB_1)\otimes \bY_0
\,=\,2\,\Y^s_0\,\frac{\frak{a}_s^2}{\h_s^2}\,\bV_s\otimes \bV_s\,.
$$
This already gives the expansion of $\nabla^2\Action$ up to ${\mathcal O}(\ln\rho)$ remainders.

The explicitation of 
$$
\bC_2=2\,\bA_2\otimes \bY_0\,+\,\nabla_{\tot} \bA_0
$$
requires further work. To make $\bA_2$ more explicit we first observe that by the Leibniz rule
$$
\begin{array}{l}\displaystyle
\f^s_{\bv,0}\,=\,\Y^s_{\bv,0}\,\bV_s\,+\,\Y^s_0\,\bW_s\,,
\qquad\qquad
\f^s_{\bv\bv,0}\,=\,\Y^s_{\bv\bv,0}\,\bV_s\,+\,2\,\Y^s_{\bv,0}\,\bW_s\,+\,\Y^s_{0}\,\bZ_s\,,\\[5pt]
\f^s_{\bz,0}\,=\,\Y^s_{\bz,0}\,\bV_s\,,\qquad\qquad
\f^s_{\bz\bz,0}\,=\,\Y^s_{\bz\bz,0}\,\bV_s\,,\qquad\qquad
\f^s_{\bw\bz,0}\,=\,\Y^s_{\bw\bz,0}\,\bV_s\,.
\end{array}
$$
Given that $\bA_0\,=\,-\,\Y^s_0\,\bV_s$ and $\bV_s\,=\,\nabla\ZZ^s_0$, we also have 
$$
\nabla_{\tot}\bA_0\,=\,-\,\bV_s\otimes \nabla_{\tot}\Y^s_0\,-\,\Y^s_0\,\nabla_{\tot}\bV_s\,,$$
with
$$
\begin{array}{rcl}\displaystyle
\nabla_{\tot}\Y^s_0&=&\displaystyle
\nabla\Y^s_0\,+\,(\Y^s_{\bv,0}\,+\,2\,\Y^s_{\bz,0})\,\nabla\bv_s\,,\\[5pt]
\nabla_{\tot}\bV_s&=&\,-\,\bT_s\otimes \bT_s\,+\,\bW_s\otimes \nabla\bv_s\,.
\end{array}
$$
Besides, we may proceed as we have derived $\nabla\bv_0$ in the harmonic limit and differentiate $\partial_\bv\Potential(\bv_s(\speed,\blambda);\speed,\blambda)\,=\,0$ to obtain 
\begin{equation}
\label{eq:gradvs}
\nabla \bv_s\,=\,-\,\frac{\frak{a}_s^2}{2}\,\bW_s\,.
\end{equation}
Finally, we also need to express $\nabla\Y^s$. Again we may proceed as in the harmonic limit to obtain 
\begin{equation}
\label{eq:nablaY}
\frac{\nabla \Y^s_0}{\Y^s_{0}}
\,=\,-\,\frac{\nabla\Redpot^s_0}{2\,\Redpot^s_0}
\,=\,-\,\frac{\frak{a}_s^2}{4}\,\,\bZ_s\,.
\end{equation}
In a similar way, we check that 
\begin{equation}
\label{eq:bYz}
\frac{\Y^s_{\bz,0}}{\Y^s_0}\,=\,-\,\frac{\Redpot^s_{\bz,0}}{2\,\Redpot^s_0}
\,=\,\frac{\frak{b}_s}{\frak{a}_s^2}\,.
\end{equation}
\emph{In fine}, we may gather all the terms involved in $\bC_2$ since on the one hand
$$\begin{array}{rcl}\displaystyle
2\,\bA_2\otimes \bY_0&=&\displaystyle
\frak{a}_s^2\,\left(\dfrac{\frak{b}_s}{\frak{a}_s^2}\,\f^s_{\bv,0}\,+\,
\dfrac{2\frak{b}_s}{\frak{a}_s^2}\,\f^s_{\bz,0}\,+\,  \f^s_{\bz\bz,0}\,-\, \f^s_{\bw\bz,0}\,+\, \tfrac{1}{4} \f^s_{\bv\bv,0}\right)\,\otimes \bV_s\\[10pt]
&=&\displaystyle
\frak{a}_s^2\,\left(\dfrac{\frak{b}_s}{\frak{a}_s^2}\,\Y^s_{\bv,0}\,+\,
\dfrac{2\frak{b}_s}{\frak{a}_s^2}\,\Y^s_{\bz,0}\,+\,  \Y^s_{\bz\bz,0}\,-\, \Y^s_{\bw\bz,0}\,+\, \tfrac{1}{4} \Y^s_{\bv\bv,0}\right)\,\bV_s\otimes \bV_s\\[5pt]
&&\qquad\,+\,
\frak{a}_s^2\,\left(\dfrac{\frak{b}_s}{\frak{a}_s^2}\,\Y^s_{0}\,+\,\tfrac{1}{2} \Y^s_{\bv,0}\right)\,\bW_s\otimes \bV_s\,+\,
\frac{\frak{a}_s^2}{4}\,\Y^s_{0}\,\bZ_s\otimes \bV_s\,,
\end{array}
$$
and on the other hand
$$
\begin{array}{l}\displaystyle
\nabla_{\tot}\bA_0\,=\,
-\,\bV_s\otimes \nabla\Y^s_0\,-\,(\Y^s_{\bv,0}\,+\,2\,\Y^s_{\bz,0})\,\bV_s\otimes \nabla\bv_s
\,+\,\Y^s_0\,\bT_s\otimes \bT_s\,-\,\Y^s_0\,\bW_s\otimes \nabla\bv_s
\\[10pt]
\quad=\displaystyle
\frac{\frak{a}_s^2}{4}\Y^s_{0}\bV_s\otimes \bZ_s
+\frac{\frak{a}_s^2}{2}\big(\Y^s_{\bv,0}
+\frac{2\frak{b}_s}{\frak{a}_s^2}\Y^s_0\big)\bV_s\otimes \bW_s
+\Y^s_0\bT_s\otimes \bT_s
+\frac{\frak{a}_s^2}{2}\Y^s_0\bW_s\otimes \bW_s.
\end{array}$$
Eventually this yields the claimed expansion up to ${\mathcal O}(1)$ remainders.

Unsurprisingly, the calculation of
$$\bD_2=\bA_0\otimes \bY_2\,+\,(\bA_1+\bB_1)\otimes \bY_1\,+\,(\bA_2+2\bB_2)\otimes \bY_0\,+\,\nabla_{\tot}\bB_0$$
is even more demanding. The first two terms are not too complicated. They reduce to
$$
\begin{array}{rcl}
\bA_0\otimes \bY_2\,+\,(\bA_1+\bB_1)\otimes \bY_1
&=&\displaystyle
\frac{\Y^s_0}{\h_s}\,
\Big(2\frak{b}_s\,+\,\frak{p}_s\,+\,
\frac{4\frak{c}_s}{\frak{a}_s}
\Big)\,\bV_s\otimes\bV_s\\[5pt]
&+&\displaystyle
\frac{\Y^s_0}{\h_s}\,\bV_s\otimes
\Big(\frak{p}_s\,\bV^s\,+\,\frac12\,\Big(3\frak{b}_s\h_s+\frak{a}_s^2\Big)\,\bW_s
\,+\,\frac{\h_s\frak{a}_s^2}{4}\,\bZ_s\Big)\,.
\end{array}
$$
At this stage we leave to the interested reader the derivation of a complete - and lengthy! - formula for $\bD_2$ but stress that this may be carried out along the lines of computations explicitly performed above.

Instead, we focus now on the value of $\bS_s\cdot \bD_2 \bS_s$. The main computational gain is that the vector 
$$\bS_s\,=\,\left(\begin{array}{c} -\,\impulse_s \\
\bB^{-1}\bU_s \\ -1\end{array}\right)$$
is orthogonal to $\bV_s$ as may be checked by direct inspection and is explicitly contained in Lemma~\ref{lem:orthVWZT} below. As a consequence, $\bS_s$ is also orthogonal to $\bY_0$, $\bY_1$, and $\bA_0$ so that
$$\bS_s\cdot \bD_2\, \bS_s\,=\,\bS_s\cdot (\nabla_{\tot}\bB_0)\,\bS_s\,.$$
In addition, we point out the fact - also straightforward to establish and stated in Lemma~\ref{lem:orthVWZT} - that $\bS_s$ is also orthogonal to $\bW_s$ and $\bT_s$. At this stage the reader could rightfully wonder whether all these cancellations are accidental. Exactly the opposite is true, they are the reason why we are interested in $\bS_s$ and $\bS_s\cdot\Action\bS_s$. A direct consequence of these orthogonalities is
$$
\bS_s\cdot (\nabla_{\tot}\bV_s)\,\bS_s\,=\,0\,.
$$
Since 
$$
\begin{array}{rcl}
\nabla_{\tot}\bB_0&=&\displaystyle
(\ln 4)\,\nabla_{\tot}\f^s_0\,+\,\int_{0}^{1}\,\frac{\nabla_{\tot}\f^s-\nabla_{\tot}\f^s_0}{\sigma}\,\dif\sigma\,,\\[5pt]
\nabla_{\tot}\f^s_0&=&
\bV_s\otimes \nabla_{\tot}\Y^s_0\,+\,\Y^s_0\,\nabla_{\tot}\bV_s\,,
\end{array}
$$
this leaves
$$\bS_s\cdot \bD_2\,\bS_s\,=\,\int_{0}^{1}\,\,\bS_s\cdot (\nabla_{\tot}\f^s)\,\bS_s\,\frac{\dif\sigma}{\sigma}\,.$$
To go even further, note that $\bS_s$ is also orthogonal to $\nabla v_s$ so that we are left with
$$\bS_s\cdot \bD_2\,\bS_s\,=\,\int_{0}^{1}\,\,\bS_s\cdot \Big(\nabla \f^s\,+\,\sigma \,\f^s_\bv\otimes \nabla\bv^s
\Big)\,\bS_s\,\frac{\dif\sigma}{\sigma}\,,$$
still to be identified with $\Mom''$.

In the order direction, we now provide - for comparison - an explicit formula for $\Mom''$. In order to avoid confusion with the partial derivative with respect to $\speed$ at constant $\blambda$ and $\mu$ - the one that is involved in the notation $\nabla$ - or with the differentiation with respect to $\sigma$, let us introduce the local convention that $\dif_\speed$ denotes the differentiation operator with respect to $\speed$ at fixed endstate $\bU_s$. To start with, we recall from Remark~\ref{rk:def-Mom} that 
$$
\Mom(\speed,\bU_s)=\int_{-\infty}^{+\infty} (\Ham[\ubU^s]+\speed \Impulse(\ubU^s) + \blambda_s \cdot \ubU^s +\mu_s)\,\dif \xi\,,$$
with 
$$\blambda_s = - \nabla_{\bU}(\Ham+\speed\Impulse)(\bU_s,0)\,,\qquad\quad
\mu_s = - \blambda_s\cdot \bU_s\,-\,(\Ham+\speed\Impulse)(\bU_s,0)\,.$$
A direct differentiation yields
$$
\dif_\speed\blambda_s\,=\,
- \nabla_{\bU}\Impulse(\bU_s)\,=\,-\,\bB^{-1}\bU_s\,,
\qquad\qquad\qquad
\dif_\speed\mu_s\,=\,
-\,\bU_s\cdot\bB^{-1}\bU_s
\,-\,\Impulse(\bU_s)\,.
$$
Likewise, by using that $\Euler (\Ham+\speed\Impulse)[\ubU^s] \,+\,\blambda_s\,=\,0$, one obtains after a change of variables
$$
\begin{array}{rcl}
\dif_\speed\Mom&=&\displaystyle
\int_{-\infty}^{+\infty} \big( \Impulse(\ubU^s) - \Impulse(\bU_s) - \bU_s\cdot \bB^{-1} (\ubU^s-\bU_s)\big)\,\dif \xi\\[10pt]
&=&\displaystyle
\int_{0}^{1} \big( \Impulse(\U_s) - \Impulse(\bU_s) - \bU_s\cdot \bB^{-1}  (\U_s-\bU_s)\big)\,\Y^s\,\frac{\dif\sigma}{\sigma}
\end{array}
$$
where in the latter integral functions of $\bv$ are evaluated at $\bv=\bv_s+\sigma (\bv^s-\bv_s)$, and $\U(v):=\bv$ in the case $N=1$ whereas, in the case $N=2$, $\transp{\U(v)}:=\left(\bv\,,g(\bv;\speed,\blambda)
\right)$. Note that in performing the foregoing change of variables we have used
$$
\frac{\Y(\bv,\bv_s,\bv_s;\speed,\blambda)}{\bv-\bv_s}
\,=\,\sqrt{\frac{2\kappa(\bv)}{\mu-\Potential(\bv;\speed,\blambda)}}\,.
$$
Now observe that, since
$$\f^s\,=\,\Y^s\,\transp{\left(1,\transp{\U_s},\Impulse(\U_s)
\right)}
\qquad\mbox{and}\qquad
\bS_s=
\transp{\left(-\,\impulse_s,\transp{(\bB^{-1}\bU_s)},-1\right)}
$$
one finds by direct inspection that 
$$ \big( \Impulse(\U_s) - \Impulse(\bU_s) - \bU_s\cdot \bB^{-1}  (\U_s-\bU_s)\big)\,\Y^s\,=\,-\,\bS_s\cdot \f^s$$
so that 
$$
\dif_\speed\Mom\,=\,-\,\int_{0}^{1} \bS_s\cdot \f^s\; \frac{\dif\sigma}{\sigma}\,.
$$
To differentiate once more notice that from the value of $\dif_c\blambda_s$ computed above follows that for any function $a$ of $(\speed,\blambda)$
$$
\dif_\speed a\,=\,\partial_\speed a\,+\,\nabla_{\blambda} a\cdot \dif_\speed\blambda_s\,=\,-\,\nabla a\cdot \bS_s\,.
$$ 
As a consequence
$$
\dif_\speed^2\Mom\,=\,\int_{0}^{1} 
\bS_s\cdot \Big(\nabla \f^s\,+\,\sigma \,\f^s_\bv\otimes \nabla\bv^s
\Big)\,\bS_s\,\frac{\dif\sigma}{\sigma}\,=\,\bS_s\cdot \bD_2\,\bS_s\,.$$
This completes the proof of Theorem~\ref{thm:ashess}, in the statement of which the matrix $\D_2$ is denoted by $\D_s$ for notational consistency. 
\end{proof}

\section{Proof of stability/instability results}\label{s:conclusions}

\subsection{Orthogonality}\label{s:ortho}

In order to exploit Theorem~\ref{thm:stabcrit} in the harmonic and soliton limits, we need a way to deduce the negative signature of $\nabla^2\Action$ from its expansions in these limits given by Theorems~\ref{thm:asharmhess} and~\ref{thm:ashess}. It will be based on a - linear - change of variables for the quadratic form associated with $\nabla^2\Action$, built up thanks to a symmetric $(N+2)\times (N+2)$ matrix $\Sb$ associated with another quadratic form, namely the impulse.

This symmetric matrix is non singular and defined as
$$\Sb\,:=\,\left(\begin{array}{c|ccc|c} 0 &  & 0 & & -1 \\ \hline & & & & \\ 0 & & \bB^{-1} & & 0 \\ & &  & & \\ \hline -1 & & 0 & & 0 \end{array}\right)\,.$$
It is closely linked to the impulse $\Impulse$ in that for any vector $\bX\in \R^{N+2}$ of the form
$$\bX=\left(\begin{array}{c} 1 \\ \bU \\ q \end{array}\right)$$
we have 
\begin{equation}\label{def:Sb}
\frac12\,\bX\cdot \Sb \bX\,=\,\Impulse(\bU)-q\,.
\end{equation}

The important role of the matrix $\Sb$ 
is already clearly apparent in the modulated equations associated with periodic waves under consideration (see \cite{BNR-GDR-AEDP}). More importantly for us here - but not unrelated - $\Sb$ is associated with some crucial orthogonality properties for the vectors $\bV_i$, $\bW_i$, $\bZ_i$, and $\bT_i$ involved in the asymptotic expansion of $\nabla^2\Action$  in the harmonic limit ($i=0$) and in the soliton limit ($i=s$). As a consequence, the crucial role played by the vector $\bS_s$ introduced in the previous section actually hinges on 
$$
\bS_s\,=\,\Sb\,\bV_s\,.
$$

\begin{lemma}\label{lem:orthVWZT}
For $i=0$ or $s$ the vectors  $\bV_i$, $\bW_i$, $\bZ_i$, and $\bT_i$ from Notation  \ref{not:UVWZ} are such that
\begin{equation}\label{eq:orthVWZT}
\left\{\begin{array}{l}
\bV_i \cdot \Sb\,\bV_i\,=\,0\,,\ \ 
\bV_i\,\cdot \Sb\,\bW_i\,=\,0\,,\ \ 
\bV_i\,\cdot \Sb\,\bT_i\,=\,0\,,\ \ 
\bV_i\,\cdot \Sb\,\bZ_i\,=\,-\,\bW_i\cdot \Sb\,\bW_i\,,\\ [10pt]
\bT_i \cdot \Sb\,\bT_i\,=\,0\,,\qquad
\bT_i \cdot \Sb\,\bZ_i\,=\,0\,,\\ [10pt]
\bE\cdot \bV_i\,=\,1\,,\qquad
\bE\cdot \bW_i\,=\,0\,,\qquad
\bE\cdot \bZ_i\,=\,0\,,\qquad
\bE\cdot \bT_i\,=\,0\,,
\end{array}\right.
\end{equation}
where we have denoted by $\bE$ the vector
$$\bE:= \left(\begin{array}{c} 1 \\ 0 \\ 0 \end{array}\right)\,=\,\Sb\,\bF\,,\qquad\qquad\bF:=\left(\begin{array}{c} 0 \\ 0 \\ -1 \end{array}\right)\,.$$
\end{lemma}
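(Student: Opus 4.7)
The plan is to read each identity as an instance of a single parametric observation together with a couple of direct inspections. The organizing remark is that, by \eqref{def:Sb}, the quadratic form associated with $\Sb$ annihilates every vector of the form $(1,\bU,\Impulse(\bU))^\top$. Now by Notation~\ref{not:UVWZ}, the gradient $\nabla\ZZ(\bv;\mu,\blambda,\speed)$ is precisely of that shape when seen as a function of $\bv$: in the case $N=2$, one has $\nabla\ZZ(\bv;\cdot)=(1,\bv,g(\bv;\speed,\lambda_2),\impulse(\bv;\speed,\lambda_2))^\top$, and $\impulse(\bv;\speed,\lambda_2)=\Impulse(\bv,g(\bv;\speed,\lambda_2))$ by definition; and similarly in the case $N=1$. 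So introducing the curve $\bv\mapsto\bX(\bv):=\nabla\ZZ(\bv;\mu_i,\blambda,\speed)$ (with $\mu_i$ the relevant value $\mu_0$ or $\mu_s$), we have $\bX(\bv)\cdot\Sb\,\bX(\bv)\equiv 0$, together with $\bX(\bv_i)=\bV_i$, $\bX'(\bv_i)=\bW_i$, $\bX''(\bv_i)=\bZ_i$.

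From that, the first row of \eqref{eq:orthVWZT} involving only $\bV_i$, $\bW_i$, $\bZ_i$ follows at once: evaluating at $\bv=\bv_i$ gives $\bV_i\cdot\Sb\,\bV_i=0$; differentiating once and using symmetry of $\Sb$ gives $\bW_i\cdot\Sb\,\bV_i=0$; differentiating twice gives $\bZ_i\cdot\Sb\,\bV_i+\bW_i\cdot\Sb\,\bW_i=0$, i.e.\ $\bV_i\cdot\Sb\,\bZ_i=-\bW_i\cdot\Sb\,\bW_i$.

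For the identities involving $\bT_i$, in the case $N=1$ there is nothing to check since $\bT_i=0$ by convention. In the case $N=2$, I would simply compute $\Sb\,\bT_i$ from the block form of $\Sb$ and the definition \eqref{eq:vectT}: decomposing $\bT_i=\tau(\bv_i)^{-1/2}\bigl(0,(0,1)^\top,\bv_i/b\bigr)^\top$, so that
\begin{equation*}
\Sb\,\bT_i\,=\,\tfrac{1}{\sqrt{\tau(\bv_i)}}\bigl(-\bv_i/b,\,\bB^{-1}(0,1)^\top,\,0\bigr)^\top\,=\,\tfrac{1}{\sqrt{\tau(\bv_i)}}\bigl(-\bv_i/b,\,1/b,\,0,\,0\bigr)^\top.
\end{equation*}
Then the pairings $\bV_i\cdot\Sb\,\bT_i$, $\bT_i\cdot\Sb\,\bT_i$, $\bT_i\cdot\Sb\,\bZ_i$ all reduce to trivial sums: the first reads $\tau(\bv_i)^{-1/2}(-\bv_i/b+\bv_i/b)=0$ using $\bV_i=(1,\bv_i,g^i,\impulse_i)^\top$, while the last two vanish because the two nonzero entries of $\Sb\,\bT_i$ sit in slots where $\bT_i$ and $\bZ_i$ are zero.

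The third row of \eqref{eq:orthVWZT} is completely trivial, as $\bE$ extracts the first component of its argument and the vectors $\bV_i$, $\bW_i$, $\bZ_i$, $\bT_i$ have first components $1$, $0$, $0$, $0$ respectively. The main ``obstacle'' is really only notational book-keeping across the two cases $N=1$ and $N=2$; there is no substantive difficulty, and the parametric curve viewpoint in the first paragraph is what makes the six identities of the first row visibly appear as the Taylor expansion of a single identity $\bX\cdot\Sb\,\bX\equiv 0$ around $\bv=\bv_i$.
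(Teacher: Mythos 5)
Your proof is correct and follows essentially the paper's own route: both rest on the identity $\nabla\ZZ\cdot\Sb\,\nabla\ZZ\equiv 0$ (your $\bX(\bv)\cdot\Sb\,\bX(\bv)\equiv 0$) differentiated once and twice in $\bv$ at $\bv_i$, plus reading off first components against $\bE$. The only divergence is cosmetic: for the identities involving $\bT_i$ you compute $\Sb\,\bT_i$ explicitly, whereas the paper gets $\bV_i\cdot\Sb\,\bT_i=0$ from the parameter gradient of the same identity ($\nabla\ZZ\cdot\Sb\,\nabla^2\ZZ=0$ together with $\nabla^2\ZZ^i=-\bT_i\otimes\bT_i$) and $\bT_i\cdot\Sb\,\bT_i=\bT_i\cdot\Sb\,\bZ_i=0$ from the vanishing of the quadratic form of $\Sb$ on $\{(0,0)\}\times\R^N$ — a direct check the paper itself notes is equally admissible.
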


\begin{proof}
Of course these relations may be checked by direct inspection. It is however more instructive to relate them to fundamental properties of $\Sb$ and $\ZZ$. Most of them are consequences of the identity
$$\nabla\ZZ \cdot \Sb\,\nabla\ZZ\,=\,0\,,$$
which readily stems from \eqref{def:Sb} and \eqref{eq:derZNtwo}/\eqref{eq:derZNone}. By differentiation and thanks to the symmetry of $\Sb$ this first identity yields
$$\nabla\ZZ \cdot \Sb\,\nabla\ZZ_\bv\,=\,0\,,\qquad\qquad
\nabla\ZZ \cdot \Sb\,\nabla^2\ZZ\,=\,0\,,$$
and also
$$\nabla\ZZ \cdot \Sb\,\nabla\ZZ_{\bv\bv}\,=\,-\,\nabla\ZZ_\bv \cdot \Sb\,\nabla\ZZ_\bv\,.$$
When evaluated at $\bv=\bv_i$ these four relations give
$$\bV_i \cdot \Sb\,\bV_i\,=\,0\,,\quad
\bV_i\,\cdot \Sb\,\bW_i\,=\,0\,,\quad
\bV_i\,\cdot \Sb\,\bT_i\,=\,0\,,\quad
\bV_i\,\cdot \Sb\,\bZ_i\,=\,-\,\bW_i\cdot \Sb\,\bW_i\,.$$
Likewise, from
$$
\bE\cdot \nabla\ZZ\,=\,1\,,
$$
which is essentially the definition of $\d_\mu$, one obtains
$$
\bE\cdot \bV_i\,=\,1\,,\qquad
\bE\cdot \bW_i\,=\,0\,,\qquad
\bE\cdot \bZ_i\,=\,0\,,\qquad
\bE\cdot \bT_i\,=\,0\,.$$
Furthermore, since the quadratic form induced by $\Sb$ vanishes on $\{(0,0)\}\times\R^N$, from the special form of \eqref{eq:derZNtwo}/\eqref{eq:derZNone} we also infer 
$$\nabla\ZZ_{vv}\cdot \Sb\,\nabla\ZZ_{vv}\,=\,0\,,
\qquad 
\nabla\ZZ_{vv}\cdot \Sb\,\nabla^2\ZZ\,=\,0\,,
$$
hence
$$\bT_i \cdot \Sb\,\bT_i\,=\,0\,,\qquad
\bT_i \cdot \Sb\,\bZ_i\,=\,0\,.$$
\end{proof}

\begin{remark}\label{rem:bases}
A useful observation for the sequel is that the vectors
$(\Sb\bV_i,\Sb\bW_i,\Sb\bT_i,\bE)$ form a basis of $\R^4$ in the case $N=2$, and 
$(\Sb\bV_i,\Sb\bW_i,\bE)$ a basis of $\R^3$ in the case $N=1$.
This is due to the invertibility of $\Sb$ and the fact that both $(\bV_i\;\bW_i\;\bT_i\;\bF)$ in the case $N=2$ and $(\bV_i\;\bW_i\;\bF)$ in the case $N=1$ are lower triangular matrices with nonzero entries in the diagonal - in fact $1$ or $-1$ as diagonal entries except for the one coming from $\bT_i$ given by $1/\sqrt{\tau(v_i)}$.
\end{remark}

\subsection{Stability in the harmonic limit}\label{s:stabharm}

From Eq.~\eqref{eq:asharmhess} in Theorem \ref{thm:asharmhess} we know that there exist real numbers $\alpha_0$, $\beta_0$, and a positive number $\gamma_0$ such that 
$$\frac{1}{\pi\Y^0}\,\nabla^2\Action\,=\,\begin{array}[t]{l}\alpha_0\,\bV_0\otimes \bV_0\,+\,\beta_0\,(\bV_0\otimes\bW_0\,+\,\bW_0\otimes\bV_0)\,-\,\bT_0\otimes\bT_0\\ [10pt]
\,+\,2\,\gamma_0\,\bW_0\otimes\bW_0\,+\,
\gamma_0\,(\bV_0\otimes\bZ_0\,+\,\bZ_0\otimes\bV_0)
\,+\,{\mathcal O}(\delta)
\\ [10pt]
\end{array}$$
when $\delta$ goes to zero.
By Remark \ref{rem:bases} the matrix $\Pb_0$ defined as
$$\Pb_0:=(\bE\;\;\Sb\bV_0\;\;\Sb\bT_0\;\;\Sb\bW_0) \mbox{ for } N=2\,,\qquad\qquad
\Pb_0:=(\bE\;\;\Sb\bV_0\;\;\Sb\bW_0) \mbox{ for } N=1\,,$$
is nonsingular. Now, thanks to \eqref{eq:orthVWZT} in Lemma~\ref{lem:orthVWZT} the symmetric matrix $\Sigma_0$ found in 
$$\transp{\Pb}_0\,\nabla^2\Action\,\Pb_0\,=\,\pi\Y^0\,\Sigma_0
\,+\,{\mathcal O}(\delta)\,,
$$
is given by
$$\Sigma_0:=
\left(\begin{array}{c|c|c|c} \alpha_0 &  -\,\gamma_0\,\bW_0\cdot\Sb\bW_0& \beta_0\,\bT_0\cdot\Sb\bW_0 &  \beta_0\,\bW_0\cdot\Sb\bW_0 \\
 &  &  &  +\gamma_0\,\bW_0\cdot\Sb\bZ_0 \\  \hline
-\,\gamma_0\,\bW_0\cdot\Sb\bW_0 & 0 & 0 &  0 \\ \hline 
\beta_0\,\bT_0\cdot\Sb\bW_0 &  0 & 2 \gamma_0\,(\bT_0\cdot\Sb\bW_0)^2 &  2 \gamma_0\,(\bT_0\cdot\Sb\bW_0) \\ 
& & & \times  (\bW_0\cdot\Sb\bW_0) \\ \hline
 \beta_0\,\bW_0\cdot\Sb\bW_0 & 0 & 2 \gamma_0\,(\bT_0\cdot\Sb\bW_0) & 2 \gamma_0\,(\bW_0\cdot\Sb\bW_0)^2 \\
 +\, \gamma_0\,\bW_0\cdot\Sb\bZ_0 & & \times (\bW_0\cdot\Sb\bW_0) & - (\bT_0\cdot\Sb\bW_0)^2
 \end{array}\right)
$$
in the case $N=2$ and 
$$\Sigma_0:=
\left(\begin{array}{c|c|c} \alpha_0 &  -\,\gamma_0\,\bW_0\cdot\Sb\bW_0&  \beta_0\,\bW_0\cdot\Sb\bW_0 \\
 &  &    +\gamma_0\,\bW_0\cdot\Sb\bZ_0 \\  \hline
-\,\gamma_0\,\bW_0\cdot\Sb\bW_0 & 0 & 0  \\ \hline 
 \beta_0\,\bW_0\cdot\Sb\bW_0 & 0 & 2 \gamma_0\,(\bW_0\cdot\Sb\bW_0)^2 \\
 +\, \gamma_0\,\bW_0\cdot\Sb\bZ_0 &  & 
 \end{array}\right)
$$
in the case $N=1$.

\begin{corollary}
\label{cor:stabharm}
Under Assumption \ref{as:main} and those of Theorem \ref{thm:stabcrit}, if 
$$\Y^0\,\alpha_0=\frak{b}_0\,(\Y_\bv^0+\,2\,\Y_\bz^0)\,+\,\frak{a}_0^2\,\Big(\frac14\,\Y_{\bv\bv}^0\,+\,
\Y_{\bz\bz}^0\,-\,\Y_{\bw\bz}^0\Big)\,,$$
is nonzero and, in addition if $N=2$ $g_\bv^0\neq0$, then small amplitude waves about $\bU_0$ are conditionally orbitally stable with respect to co-periodic perturbations.
\end{corollary}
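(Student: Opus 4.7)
The strategy is to work directly with the matrix $\Sigma_0$ introduced above, which captures $\nabla^2\Action/(\pi\Y^0)$ in the basis spanned by the columns of $\Pb_0$, modulo an $\mathcal{O}(\delta)$ remainder. Since $\Pb_0$ is nonsingular by Remark~\ref{rem:bases}, both the signature and the nonsingularity of $\nabla^2\Action$ can be read off $\Sigma_0$ once $\delta$ is small enough, provided $\Sigma_0$ is itself nonsingular. The goal is therefore to show that under the stated hypotheses $\Sigma_0$ is nonsingular and has exactly $N$ negative eigenvalues, so that the first bullet of Theorem~\ref{thm:stabcrit} applies and yields conditional orbital stability.

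The first step is to record the explicit values of the inner products entering $\Sigma_0$. A direct computation from the definitions of $\Sb$, $\bW_0$ and $\bT_0$ in Notation~\ref{not:UVWZ} yields $\bW_0\cdot\Sb\bW_0 = 1/b$ in the case $N=1$, and $\bW_0\cdot\Sb\bW_0 = 2\,g_\bv^0/b$ together with $\bT_0\cdot\Sb\bW_0 = 1/(b\sqrt{\tau(\bv_0)})$ in the case $N=2$. Hence, under Assumption~\ref{as:ham}, the auxiliary hypothesis $g_\bv^0\ne 0$ makes both quantities nonzero when $N=2$, whereas for $N=1$ the relevant scalar is nonzero automatically. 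Moreover the $(1,1)$ entry of $\Sigma_0$ is $\alpha_0$, so the first hypothesis of the corollary translates into $\partial_\mu^2\Action = \pi\,\Y^0\,\alpha_0 + \mathcal{O}(\delta) \ne 0$ for $\delta$ small enough, which is the first nondegeneracy condition required by Theorem~\ref{thm:stabcrit}.

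Next, I would compute $\det\Sigma_0$ by cofactor expansion along the second row, whose only nonzero entry is $-\gamma_0\,\bW_0\cdot\Sb\bW_0$. This yields $\det\Sigma_0 = -2\gamma_0^3\,(\bW_0\cdot\Sb\bW_0)^4 < 0$ when $N=1$, and $\det\Sigma_0 = 2\gamma_0^3\,(\bW_0\cdot\Sb\bW_0)^2\,(\bT_0\cdot\Sb\bW_0)^4 > 0$ when $N=2$, which provides nonsingularity of $\Hess\Action$ for $\delta$ small enough (and incidentally recovers the sign statement of Theorem~\ref{thm:stabsmall}). To pin down the signature I would invoke Cauchy's interlacing theorem applied to the top-left $2\times 2$ principal submatrix of $\Sigma_0$, whose $(1,1)$-entry is $\alpha_0$, whose $(2,2)$-entry is zero and whose off-diagonal entry is $-\gamma_0\,\bW_0\cdot\Sb\bW_0$: its determinant $-\gamma_0^2\,(\bW_0\cdot\Sb\bW_0)^2$ is strictly negative, so its signature is $(1,1)$, and by interlacing the largest eigenvalue of $\Sigma_0$ is strictly positive while the smallest is strictly negative. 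Combining this with the computed sign of $\det\Sigma_0$ then forces, for $N=1$ (a $3\times 3$ matrix with negative determinant) the remaining middle eigenvalue to be positive, giving signature $(2,1)$; and for $N=2$ (a $4\times 4$ matrix with positive determinant) the two intermediate eigenvalues to be of opposite signs, giving signature $(2,2)$. In both cases $\negsign(\Hess\Action) = N$, so Theorem~\ref{thm:stabcrit} applies.

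I expect the main technical hurdle to be not the signature computation itself --- which reduces to an essentially one-line consequence of the interlacing theorem once the key scalars are in hand --- but rather the preliminary bookkeeping of the four inner products $\bW_0\cdot\Sb\bW_0$, $\bT_0\cdot\Sb\bW_0$, $\bW_0\cdot\Sb\bZ_0$ and $\bT_0\cdot\Sb\bZ_0$ entering $\Sigma_0$, which must be handled by systematically combining the orthogonality identities of Lemma~\ref{lem:orthVWZT} with the explicit form of $\Sb$. In particular, the identification $\bW_0\cdot\Sb\bW_0 = 2\,g_\bv^0/b$ in the case $N=2$ is what transparently explains and justifies the auxiliary nondegeneracy hypothesis $g_\bv^0\ne 0$ in the statement of the corollary.
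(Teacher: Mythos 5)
Your proposal is correct and follows the paper's own strategy almost step for step: the same change of basis by $\Pb_0$, the same use of Lemma~\ref{lem:orthVWZT} to reduce $\transp{\Pb}_0\,\Hess\Action\,\Pb_0$ to $\pi\Y^0\Sigma_0+{\mathcal O}(\delta)$, the same identification of the decisive scalars $\bW_0\cdot\Sb\bW_0$ (proportional to $g_\bv^0$ when $N=2$, which indeed explains that hypothesis) and $\bT_0\cdot\Sb\bW_0$, and the same determinant values $-2\gamma_0^3(\bW_0\cdot\Sb\bW_0)^4$ for $N=1$ and $2\gamma_0^3(\bW_0\cdot\Sb\bW_0)^2(\bT_0\cdot\Sb\bW_0)^4$ for $N=2$. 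The only genuine divergence is the final count of negative directions: the paper applies Sylvester's rule to the leading principal minors of $\Sigma(\delta)$ (legitimate there since $\alpha_0\neq0$ keeps the first minor nonzero, and the sign sequences $(+,\mathrm{sign}\,\alpha_0,-,-)$ resp.\ $(+,\mathrm{sign}\,\alpha_0,-,-,+)$ yield $N$ sign changes whatever the sign of $\alpha_0$), whereas you combine Cauchy interlacing for the top-left $2\times2$ block of negative determinant with the sign of $\det\Sigma_0$ and continuity of the spectrum at the nonsingular limit matrix. Your route has the mild expository advantage of showing that $\negsign(\Hess\Action)=N$ does not depend on the sign of $\alpha_0$, the hypothesis $\alpha_0\neq0$ entering only to guarantee $\partial_\mu^2\Action=\pi\Y^0\alpha_0+{\mathcal O}(\delta)\neq0$ as required by Theorem~\ref{thm:stabcrit}; the paper's route avoids interlacing and reads the signature directly off minors it has already expanded in $\delta$. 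One side remark: your explicit values $\bW_0\cdot\Sb\bW_0=1/b$, $2g_\bv^0/b$ and $\bT_0\cdot\Sb\bW_0=1/(b\sqrt{\tau(\bv_0)})$ differ by powers of $b$ from the factors $b$, $2bg_\bv^0$, $b/\sqrt{\tau(\bv_0)}$ quoted in the paper's proof; with $\Sb$ built from $\bB^{-1}$ your normalization is the consistent one, and in any case only the nonvanishing of these quantities matters, so the discrepancy is immaterial.
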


\begin{proof}
The point is to apply Theorem \ref{thm:stabcrit} and thus to prove that $\negsign(\nabla^2\Action)=N$ for $\delta$ small enough.
Since $\Y^0>0$ by definition, the computation made above shows that this amounts to proving that the negative signature of any matrix of the form
$\Sigma(\delta)=\Sigma_0+{\mathcal O}(\delta)$
is equal to $N$ for $\delta$ small enough. 

By Sylvester's rule, $\negsign(\Sigma(\delta))$ is equal to the number of sign changes in the sequence starting with $+1$ and continuing with the principal minors of $\Sigma(\delta)$.
The first principal minor of $\Sigma(\delta)$ is just $\alpha_0
+{\mathcal O}(\delta)$, 
of indeterminate sign. The second one is 
$$-\gamma_0^2\,(\bW_0\cdot\Sb\bW_0)^2
\,+\,{\mathcal O}(\delta)
$$ and is therefore negative for $\delta$ small enough, since $\gamma_0$ is nonzero and $\bW_0\cdot\Sb\bW_0$ as well - indeed, $\bW_0\cdot\Sb\bW_0= b$ in the case $N=1$ and $\bW_0\cdot\Sb\bW_0=2 b g_\bv^0$ in the case $N=2$.

To complete the computation of $\negsign(\Sigma(\delta))$ we need to distinguish between the case $N=1$ and the case $N=2$. The simplest one is the former. From the definition of $\Sigma_0$ in this case, we see that
$$\det\Sigma(\delta)\,=\,-\,2\,\gamma_0^3\,(\bW_0\cdot\Sb\bW_0)^4
\,+\,{\mathcal O}(\delta)
$$
is negative for $\delta$ small enough since $\gamma_0$ is positive and $\bW_0\cdot\Sb\bW_0$ is nonzero. So the complete sequence of signs is $(+,\mbox{sign}\alpha_0,-,-)$, which involves exactly one change of sign. This implies that $\negsign(\Sigma(\delta))=1$ in the case $N=1$.

From the definition of $\Sigma_0$ in the case $N=2$ we see that the third principal minor of $\Sigma(\delta)$ is
$$\,-\,2\,\gamma_0^3\,(\bT_0\cdot\Sb\bW_0)^2\,(\bW_0\cdot\Sb\bW_0)^2
\,+\,{\mathcal O}(\delta)
\,.$$
This is negative for $\delta$ small enough because $\bT_0\cdot\Sb\bW_0=b/\sqrt{\tau(\bv_0)}$ is nonzero. Finally, in the case $N=2$
$$
\det\Sigma(\delta)\,=\,2\,\gamma_0^3\,(\bW_0\cdot\Sb\bW_0)^2\,(\bT_0\cdot\Sb\bW_0)^4
\,+\,{\mathcal O}(\delta)\,,
$$
which is positive for $\delta$ small enough. 
So the complete sequence of signs is $(+,\mbox{sign}\alpha_0,-,-,+)$, which involves exactly two changes of sign, hence $\negsign(\Sigma(\delta))=2$ in the case $N=2$.
\end{proof}

We can actually derive a more explicit formula for $\alpha_0$. Indeed, computing as in the proof of Theorem~\ref{thm:asharmhess}
from the definitions in \eqref{eq:redpot}-\eqref{eq:defY} we see that
$$\frac{\Y_{\bv}^0}{\Y^0}\,=\,-\frac{\Redpot_{\bv}^0}{2\Redpot^0}\,+\,\frac{\cap_{\bv}^0}{2\cap^0}
\,=\,\frac{\frak{b}_0}{\frak{a}_0^2}
\,+\,\frac{\cap_{\bv}^0}{2\cap^0}\,.$$
We recall that we have already established likewise in \eqref{eq:bYzzero} that
$$
\frac{\Y^0_{\bz}}{\Y^0}
\,=\,-\frac{\Redpot_{\bz}^0}{2\Redpot^0}
\,=\,\frac{\frak{b}_0 }{\frak{a}_0^2}\,.$$
so that
$$
\frac{\Y_{\bv}^0+2\Y^0_{\bz}}{\Y^0}
\,=\,
\frac{3\frak{b}_0}{\frak{a}_0^2}
\,+\,\frac{\cap_{\bv}^0}{2\cap^0}\,.
$$
Differentiating once more we get on the one hand
$$
\begin{array}{rcl}\displaystyle
\frac{\Y_{\bv\bv}^0}{\Y^0}&=&\displaystyle
\left(\frac{\Y_{\bv}^0}{\Y^0}\right)^2
\,+\,\frac12\left(\frac{\cap_\bv}{\cap}\right)_{\!\bv}^{\!0}
+\frac12\,\left(\frac{\Redpot_{\bv}^0}{\Redpot^0}\right)^2-\frac12\,\frac{\Redpot_{\bv\bv}^0}{\Redpot^0}\,,\\[5pt]
&=&\displaystyle
\frac{3\frak{b}_0^2}{\frak{a}_0^4}
\,+\,\frac{\frak{b}_0}{\frak{a}_0^2}\frac{\cap_{\bv}^0}{\cap^0}
\,+\,\frac14\left(\frac{\cap_{\bv}^0}{\cap^0}\right)^2
\,+\,\frac12\left(\frac{\cap_\bv}{\cap}\right)_{\!\bv}^{\!0}
-\frac12\,\frac{\Redpot_{\bv\bv}^0}{\Redpot^0}\,,
\end{array}
$$
with
$$
\Redpot_{\bv\bv}^0\,=\,\Potential_{\bv\bv\bv\bv}^0\,\int_{0}^{1}\int_{0}^{1}t^3 s^2\,\dif s\dif t\,=\,\frac{\Potential_{\bv\bv\bv\bv}^0}{12}
\,=\,
10\,\frac{\frak{b}_0^2}{\frak{a}_0^6}
-4\frac{\frak{c}_0}{\frak{a}_0^5}\,,
$$
so that
$$
\frac{\Y_{\bv\bv}^0}{\Y^0}
\,=\,
-\,\frac{2\frak{b}_0^2}{\frak{a}_0^4}
\,+\,2\frac{\frak{c}_0}{\frak{a}_0^3}
\,+\,\frac{\frak{b}_0}{\frak{a}_0^2}\frac{\cap_{\bv}^0}{\cap^0}
\,+\,\frac14\left(\frac{\cap_{\bv}^0}{\cap^0}\right)^2
\,+\,\frac12\left(\frac{\cap_\bv}{\cap}\right)_{\!\bv}^{\!0}\,,
$$
On the other hand, thanks to the symmetry of $\Redpot$ proved in Lemma~\ref{lem:symmetry}, we obtain
$$
\begin{array}{rclcl}\displaystyle
\frac{\Y_{\bz\bz}^0}{\Y^0}
&=&\displaystyle
\left(\frac{\Y_{\bz}^0}{\Y^0}\right)^2\,+\,\frac12\left(\frac{\Redpot_{\bz}^0}{\Redpot^0}\right)^2\,-\,\frac12\frac{\Redpot_{\bz\bz}^0}{\Redpot^0}
&=&\displaystyle
\left(\frac{\Y_{\bz}^0}{\Y^0}\right)^2\,+\,\frac12\left(\frac{\Redpot_{\bz}^0}{\Redpot^0}\right)^2\,-\,\frac12\frac{\Redpot_{\bv\bv}^0}{\Redpot^0}\,,
\\[10pt]
\displaystyle
\frac{\Y_{\bw\bz}^0}{\Y^0}
&=&\displaystyle
\frac{\Y_{\bz}^0\Y_{\bw}^0}{(\Y^0)^2}\,+\,\frac12 \frac{\Redpot_{\bz}^0\Redpot_{\bw}^0}{(\Redpot^0)^2}\,-\,\frac12\frac{\Redpot_{\bw\bz}^0}{\Redpot^0}
&=&\displaystyle
\left(\frac{\Y_{\bz}^0}{\Y^0}\right)^2\,+\,\frac12\left(\frac{\Redpot_{\bz}^0}{\Redpot^0}\right)^2\,-\,\frac12\frac{\Redpot_{\bw\bz}^0}{\Redpot^0}\,,
\end{array}
$$
with 
$$\Redpot_{\bw\bz}^{0}\,=\,\Potential_{\bv\bv\bv\bv}^0\,\int_{0}^{1}\int_{0}^{1}t^2(1-t)(1-s)\,\dif s\dif t\,=\,\frac{\Potential_{\bv\bv\bv\bv}^0}{24}\,=\,\frac12\,\Redpot_{\bv\bv}^0$$
so that
$$
\frac{\Y_{\bz\bz}^0-\Y_{\bw\bz}^0}{\Y^0}
\,=\,-\,\frac14\frac{\Redpot_{\bv\bv}^0}{\Redpot^0}
\,=\,
-\frac52\,\frac{\frak{b}_0^2}{\frak{a}_0^4}
+\frac{\frak{c}_0}{\frak{a}_0^3}\,.
$$
This eventually implies that
$$
\begin{array}{rcl}
\alpha_0
&=&\displaystyle
\frac{3\frak{b}_0^2}{\frak{a}_0^2}
\,+\,\frak{b}_0\,\frac{\cap_{\bv}^0}{2\cap^0}
-\frac52\,\frac{\frak{b}_0^2}{\frak{a}_0^2}
+\frac{\frak{c}_0}{\frak{a}_0}
\,-\,\frac{\frak{b}_0^2}{2\frak{a}_0^2}
\,+\,\frac{\frak{c}_0}{2\frak{a}_0}
\,+\,\frac{\frak{b}_0}{4}\frac{\cap_{\bv}^0}{\cap^0}
\,+\,\frac{\frak{a}_0^2}{16}\left(\frac{\cap_{\bv}^0}{\cap^0}\right)^2
\,+\,\frac{\frak{a}_0^2}{8}\left(\frac{\cap_\bv}{\cap}\right)_{\!\bv}^{\!0}
\end{array}
$$
hence
\begin{equation}\label{eq:alpha-formula}
\alpha_0
\,=\,
\frac{3\frak{c}_0}{2\frak{a}_0}
\,+\,\frac{3\frak{b}_0}{4}\frac{\cap_{\bv}^0}{\cap^0}
\,+\,\frac{\frak{a}_0^2}{8}\frac{(\cap_{\bv\bv})^{\!0}}{\cap^{\!0}}\,.
\end{equation}

\begin{remark} 
Since 
$$
\partial_\mu\Xi=\partial_\mu^2\Action=\bE\cdot\Hess\Action\bE
=\pi\,\Y^0\,\alpha_0\,+\,{\mathcal O}(\delta)
$$ 
the condition $\alpha_0\neq0$ is a condition on the nondegeneracy of the dependence of the period $\Xi$ on $\mu$, an obviously natural condition. A large body of the literature on orbital stability relies on the even more stringent assumption $\partial_\mu\Xi>0$ ; see discussion in \cite{BMR}.

In the special case when \emph{$\cap$ is constant}, we see from the computation here above that $\alpha_0$ is nonzero if and only if $\frak{c}_0$ is nonzero. This is also a rather natural condition since it means that the expansion of the amplitude of the wave $\bv_3-\bv_2$ in powers of $\sqrt{\mu-\mu_0}$ is nondegenerate - the coefficient $\frak{a}_0$ of the leading order term being forced to be nonzero by Assumption \ref{as:main}.

In particular, for a scalar equation with $\cap$ constant, we have $$\Potential (v;\lambda,\speed)= \free(\bv) \,-\,\tfrac{\speed}{2b} v^2\,-\,\lambda\,v\,,
$$
with
$$\en(\bv,\bv_x)= \frac12\cap\,\bv_x^2\,+\,\free(\bv)$$
and the fact that $\alpha_0$ is nonzero is equivalent to
$$\frac53 (\free^{'''}(\bv_0))^2\,\neq\,\left(\free''(\bv_0)-\frac{\speed}{b}\right)\,\free^{(4)}(\bv_0)\,.$$
Since $\free''(\bv_0)-\speed/b>0$ by assumption, it is automatically satisfied when $\free^{(4)}(\bv_0)<0$. It also includes the Korteweg--de Vries case where $\free(\bv)=v^3/6$. Otherwise, this may exclude some exceptional points form the range of validity of Corollary~\ref{cor:stabharm}.
\end{remark}

\begin{remark}
The condition $g_\bv^0\neq 0$ if $N=2$ is also rather natural. It expresses that the slaving of $\bu$ to $\bv$ induced by traveling wave profile equations is nondegenerate. Here this is explicitly written as
$$
\left(\lambda_2+\frac{c}{b}\,\bv_0\right)\,\tau'(\bv_0)\neq
\frac{c}{b}\,\tau(\bv_0)\,.
$$
When $\tau\equiv1$, this reduces to $\speed\neq0$ and when $\tau(\bv)=\bv$ it amounts to $\lambda_2\neq0$.

In particular, for the Euler--Korteweg system - either {\rm (EKE)} or {\rm (EKL)} -, it is automatically satisfied whenever the wave is a dynamic one, in the sense that there is some mass transfer across it. Using the notation $j$ for the mass transfer flux across the wave - as in \cite{SBG-DIE} - we see indeed that $g_\bv^0$ is equal to $j$ for {\rm (EKL)}, and equal to $-j/\rho_0^2$ for  {\rm (EKE)}.
\end{remark}

\subsection{Stability or instability in the soliton limit}

By relying on arguments similar to those used to derive Corollary \ref{cor:stabharm} from Theorems~\ref{thm:stabcrit} and~\ref{thm:asharmhess}, now we draw neat conclusions on the stability of periodic waves close to solitary waves from Theorem~\ref{thm:ashess}.

\begin{corollary}
\label{cor:stabsol} Under Assumption \ref{as:main} and those of Theorem \ref{thm:stabcrit}, we have the following properties for periodic waves of sufficiently large period.
\begin{itemize}
\item Their period $\Xi$ is monotonically increasing with $\mu$, that is, $\partial_\mu\Xi=\partial_\mu^2\Action>0$.
\item If, for the limiting solitary wave, the second derivative at fixed endstate with respect to velocity  $\Mom''$ of the Boussinesq momentum $\Mom$ is non zero, then the matrix $\Hess\Action$ is nonsingular.
\item If in addition $\Mom''>0$ then 
the negative signature of  $\Hess\Action$ is equal to $N$ and the corresponding wave is conditionally orbitally stable.
\item If on the contrary $\Mom''<0$ then 
the negative signature of  $\Hess\Action$ is equal to $N+1$ and the corresponding wave is spectrally unstable.
\end{itemize}
\end{corollary}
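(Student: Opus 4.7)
The plan is to apply Theorem~\ref{thm:stabcrit}, which requires computing $\negsign(\nabla^2\Action)$ and checking nondegeneracy of both $\partial_\mu^2\Action$ and $\det\nabla^2\Action$ in the regime $\rho\to 0$. Following the approach used for Corollary~\ref{cor:stabharm}, I would conjugate $\nabla^2\Action$ by the non-singular matrix
$$\Pb_s := \bigl(\bE\;\;\Sb\bV_s\;\;\Sb\bT_s\;\;\Sb\bW_s\bigr)\quad(N=2),\qquad \Pb_s := \bigl(\bE\;\;\Sb\bV_s\;\;\Sb\bW_s\bigr)\quad(N=1),$$
whose invertibility follows from Remark~\ref{rem:bases}. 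The design is dictated by Lemma~\ref{lem:orthVWZT}: the columns are chosen so that their inner products with $\bV_s,\bW_s,\bT_s,\bZ_s$ annihilate most of the rank-one contributions in the expansion \eqref{eq:ashess}. The upshot is that $\transp{\Pb}_s\,\nabla^2\Action\,\Pb_s$ splits into three blocks living on the three scales $\rho^{-2}$, $\ln\rho$ and $1$ appearing in \eqref{eq:ashess}.

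The first bullet is immediate: using $\bE\cdot\bV_s=1$ and $\bE\cdot\bW_s=\bE\cdot\bZ_s=\bE\cdot\bT_s=0$, the $(1,1)$-entry equals
$$\partial_\mu^2\Action\;=\;\bE\cdot\nabla^2\Action\,\bE\;=\;\Y^s_0\,\frac{2\frak{a}_s^2}{\h_s^2}\,\rho^{-2}\bigl(1+O(\rho)\bigr)\,>\,0.$$
For the middle block, the key point is that $\bS_s=\Sb\bV_s$ together with the identities $\bV_s\cdot\Sb\bV_s=\bV_s\cdot\Sb\bW_s=\bV_s\cdot\Sb\bT_s=0$ and $\bV_s\cdot\Sb\bZ_s=-\bW_s\cdot\Sb\bW_s$ annihilate every tensor product appearing in both the $\rho^{-2}$ and $\ln\rho$ coefficients of \eqref{eq:ashess} once contracted by $\Sb\bV_s$ on both sides, so that
$$\Sb\bV_s\cdot\nabla^2\Action\,\Sb\bV_s\;=\;\Y^s_0\,\bS_s\cdot\bD_s\,\bS_s\,+\,O(\rho\ln\rho)\;=\;\Y^s_0\,\Mom''\,+\,o(1),$$
by the last assertion of Theorem~\ref{thm:ashess}. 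For the last block involving $\Sb\bT_s,\Sb\bW_s$ (only $\Sb\bW_s$ when $N=1$), only the piece $\tfrac{\frak{a}_s^2}{2}\bW_s\otimes\bW_s+\bT_s\otimes\bT_s$ of the $\ln\rho$ coefficient survives both contractions, producing a block $\Y^s_0\,\ln\rho\,M_\flat+O(1)$ where $M_\flat$ is an $N\times N$ symmetric matrix explicit in terms of $\bW_s\cdot\Sb\bW_s$ and $\bT_s\cdot\Sb\bW_s$. Direct computation of its trace and determinant shows that $M_\flat$ is positive definite, so since $\ln\rho\to-\infty$ this block carries exactly $N$ negative eigenvalues.

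Two successive Schur complement reductions then handle the remaining off-diagonal entries. The off-diagonal entries in the $\bE$-column are at worst $O(\ln\rho)$, as their $\rho^{-2}$ coefficients vanish by orthogonality; after elimination against the $O(\rho^{-2})$ diagonal they inject into the lower blocks corrections of size $O(\rho^2\ln^2\rho)\to 0$. The entries linking $\Sb\bV_s$ to $\{\Sb\bT_s,\Sb\bW_s\}$ are $O(1)$, and after elimination against the $O(\ln\rho)$ block contribute $O(1/|\ln\rho|)\to 0$. The Schur-reduced diagonal is thus $(+,\,\Y^s_0\Mom''+o(1),\,N\text{ entries of sign }-)$, and Sylvester's rule yields $\negsign(\nabla^2\Action)=N$ when $\Mom''>0$ and $\negsign(\nabla^2\Action)=N+1$ when $\Mom''<0$. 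Moreover $\det\nabla^2\Action$ is nonzero with $\mathrm{sgn}\det\nabla^2\Action=\mathrm{sgn}\Mom''\cdot(-1)^N$, which gives the sign relations $\Mom''\cdot\det\nabla^2\Action<0$ for $N=1$ and $>0$ for $N=2$ stated in Theorem~\ref{thm:stabsol}. Theorem~\ref{thm:stabcrit} then delivers conditional orbital stability when $\Mom''>0$ (signature $N$) and spectral instability when $\Mom''<0$ (signature $N+1$, so $\negsign-N=1$ is odd).

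The main technical obstacle is the bookkeeping of the cross-scale off-diagonal entries: one must verify precisely how far below their naive size each entry falls, since the Schur reductions above rely on these refined orders. Once Lemma~\ref{lem:orthVWZT} has been applied systematically, however, the separation of the three scales $\rho^{-2}$, $\ln\rho$ and $1$ renders the conjugated matrix essentially block-diagonal and the conclusion is forced.
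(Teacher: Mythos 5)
Your proposal is correct and follows essentially the same route as the paper: conjugation by $\Pb_s$, systematic use of Lemma~\ref{lem:orthVWZT}, the identification $\bS_s\cdot\bD_s\bS_s=\Mom''$ from Theorem~\ref{thm:ashess}, and the separation of the scales $\rho^{-2}$, $\ln\rho$, $1$. The only difference is cosmetic: you read off the inertia via successive Schur-complement reductions to an essentially block-diagonal form, while the paper counts sign changes of the leading principal minors of $\Sigma(\rho)$ (Sylvester's rule); both are congruence-based inertia counts and yield the same conclusions, including the sign of $\det\Hess\Action$.
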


\proof
We have 
$$\partial_\mu^2\Action\,=\,\transp{\bE}\,\Hess\Action\,\bE\,,$$
where, as in Lemma~\ref{lem:orthVWZT}, $\bE$ is the first vector of the canonical basis of $\R^{N+2}$. By Theorem~\ref{thm:ashess} (case $N=2$), we thus see that in the solitary wave limit
$$\partial_\mu^2\Action\,=\,\Y^s_0\,\frac{2\frak{a}_s^2}{\h_s^2}\,\left(\frac{1}{\rho^2}\,+\,\frac{1}{\rho}\right)\,+\,{\mathcal O}(\ln\rho)\,,$$
which implies indeed that $\partial_\mu^2\Action$ is positive for $\rho$ small enough.

In the same spirit as in Section \ref{s:stabharm}, we may look at the quadratic form associated with $\Hess\Action$ in the convenient basis suggested by Lemma~\ref{lem:orthVWZT} and Remark~\ref{rem:bases}. Namely we introduce
$$\Pb_s:=(\bE\;\;\Sb\bV_s\;\;\Sb\bT_s\;\;\Sb\bW_s) \mbox{ for } N=2\,,\qquad\qquad
\Pb_s:=(\bE\;\;\Sb\bV_s\;\;\Sb\bW_s) \mbox{ for } N=1\,.$$
We now consider the principal minors of 
$$
\Sigma(\rho)
:=\frac{1}{\Y^s_0}\,\transp{\Pb}_s\,\Hess\Action\,\Pb_s\,.
$$
In contrast with the small amplitude case, a good choice in the basis ordering is here immaterial as it is scale separation that allows an easy computation of the signature.

Let us start with the case $N=2$. Then from \eqref{eq:ashess}  in Theorem~\ref{thm:ashess} and orthogonality relations in \eqref{eq:orthVWZT} (Lemma~\ref{lem:orthVWZT}) we have
$$
\Sigma(\rho)\,=\,
\left(
\begin{array}{c|c|c|c}\dfrac{2\frak{a}_s^2}{\h_s^2}\,\left(\dfrac{1}{\rho^2}\,+\,\dfrac{1}{\rho}\right)& 
{\mathcal O}\big(\ln\rho\big) & {\mathcal O}\big(\ln\rho\big) & {\mathcal O}\big(\ln\rho\big)
\\ [10pt]
\,+\,{\mathcal O}(\ln\rho)  & & & \\ [5pt] 
\hline 
{\mathcal O}\big(\ln\rho\big) &\Mom''& {\mathcal O}\big(1\big) & {\mathcal O}\big(1\big) \\ [5pt]
& + \,{\mathcal O}\big(\rho\ln\rho\big) & & \\ [5pt] 
\hline
{\mathcal O}\big(\ln\rho\big) & {\mathcal O}\big(1\big) & \dfrac{\frak{a}_s^2}{2}\,(\bT_s\cdot \Sb\bW_s)^2 \ln\rho & \dfrac{\frak{a}_s^2}{2}\,(\bT_s\cdot \Sb\bW_s)\\ [5pt]
& & +\, {\mathcal O}\big(1\big) &\,\times(\bW_s\cdot \Sb\bW_s)\,\ln\rho \\  [5pt]
 & & & +\, {\mathcal O}\big(1\big)\\ [5pt] 
\hline
{\mathcal O}\big(\ln\rho\big) & {\mathcal O}\big(1) &\dfrac{\frak{a}_s^2}{2}\,(\bT_s\cdot \Sb\bW_s) & \dfrac{\frak{a}_s^2}{2}\,(\bW_s\cdot \Sb\bW_s)^2\ln\rho\\ [5pt]
& & \,\times(\bW_s\cdot \Sb\bW_s)\,\ln\rho &+ \,(\bT_s\cdot \Sb\bW_s)^2\,\ln\rho \\  [5pt]
& &  +\, {\mathcal O}\big(1\big)& \,+\,{\mathcal O}\big(1\big) \\
\end{array}\right)\,.$$
The first minor is $\partial_\mu^2\Action/\Y^0$ already expanded above. The last one satisfies
$$\det\Sigma(\rho)\,=\,
\frac{\frak{a}_s^4}{\h_s^2}\,(\bT_s\cdot \Sb\bW_s)^4\,\Mom''
\frac{(\ln\rho)^2}{\rho^2}
\,+\,{\mathcal O}\Big(\frac{\ln\rho}{\rho^2}\Big)\,.
$$
In particular, since $\bT_s\cdot \Sb\bW_s=b/\sqrt{\tau(\bv_s)}$ is nonzero, $\det\Hess\Action$ is therefore also nonzero for $\rho$ small enough provided that $\Mom''$ is nonzero. Other principal minors are as follows. The $2\times 2$ one reads
$$\Delta_2\,=\,
\frac{2\frak{a}_s^2}{\h_s^2}\,\Mom''\,\frac{1}{\rho^2}
\,+\,{\mathcal O}\Big(\frac{\ln\rho}{\rho}\Big)
$$
and the $3\times 3$ one is
$$\Delta_3\,=\,
\frac{\frak{a}_s^4}{\h_s^2}\,(\bT_s\cdot \Sb\bW_s)^2\,\Mom''\,
\frac{\ln\rho}{\rho^2}\,+\,{\mathcal O}\Big(\frac{1}{\rho^2}\Big)\,.$$
Therefore, when $\Mom''\neq0$, by Sylvester's rule, for $\rho$ small enough the negative signature of $\Hess\Action$ equals the number of sign changes in $(+,+,\mbox{sign}\Mom'',-\mbox{sign}\Mom'',\mbox{sign}\Mom'')$, that is, two if $\Mom''$ is positive and three if $\Mom''$ is negative. This completes the proof of Corollary~\ref{cor:stabsol} in the case $N=2$. 

Let us now look at the case $N=1$. We first observe that 
$$\Sigma(\rho)\,=\,
\left(
\begin{array}{c|c|c}\dfrac{2\frak{a}_s^2}{\h_s^2}\,\left(\dfrac{1}{\rho^2}\,+\,\dfrac{1}{\rho}\right)&  {\mathcal O}\big(\ln\rho\big) & {\mathcal O}\big(\ln\rho\big)
\\ [10pt]
\,+\,{\mathcal O}(\ln\rho)  &  & \\ [5pt] \hline
{\mathcal O}\big(\ln\rho\big) & 
\Mom'' & {\mathcal O}\big(1\big) \,\\ [5pt] 
 &  +\, {\mathcal O}\big(\rho\ln\rho\big) & \\ [5pt] \hline
{\mathcal O}\big(\ln\rho\big)  & {\mathcal O}\big(1) & 
\dfrac{\frak{a}_s^2}{2}\,(\bW_s\cdot \Sb\bW_s)^2\ln\rho\\ [5pt]
& &  \,+\,{\mathcal O}\big(1\big) \\
\end{array}\right)\,.$$
From this we find that
$$\det\Sigma(\rho)\,=\,\frac{\frak{a}_s^4}{\h_s^2}\,(\bW_s\cdot \Sb\bW_s)^2\,\Mom''\,
\frac{\ln\rho}{\rho^2}\,+\,{\mathcal O}\Big(\frac{1}{\rho^2}\Big)\,.$$
Note that $\bW_s\cdot \Sb\bW_s=b$ is nonzero so that $\Mom''\neq0$ implies that $\det\Sigma(\rho)\neq0$ for sufficiently small $\rho$. As in the former case, the expansion of the first principal minor of $\Sigma(\rho)$ is already known and the one of its $2\times 2$ principal minor is
$$\Delta_2\,=\,
\frac{2\frak{a}_s^2}{\h_s^2}\,\Mom''\,\frac{1}{\rho^2}
\,+\,{\mathcal O}\Big(\frac{\ln\rho}{\rho}\Big)
\,.$$
Therefore, when $\Mom''\neq0$, by Sylvester's rule, for $\rho$ small enough the negative signature of $\Hess\Action$ equals the number of sign changes in $(+,+,\mbox{sign}\Mom'',-\mbox{sign}\Mom'')$, that is, one if $\Mom''$ is positive and two if $\Mom''$ is negative.
\endproof

\paragraph{Acknowledgement.} This work has been partly supported by the ANR project BoND (ANR-13-BS01-0009-01), and by the LABEX MILYON (ANR-10-LABX-0070) of Université de Lyon, within the program `\emph{Investissements d'Avenir}' (ANR-11-IDEX-0007) operated by the French National Research Agency (ANR). The doctoral scholarship of the second author was directly supported by ANR-13-BS01-0009-01. 

\appendix
\section*{Appendix}
\addtocontents{toc}{\protect\contentsline{section}{\appendixname}{}{}}
\renewcommand\thesubsection{\Alph{subsection}}
\setcounter{lemma}{0}
\renewcommand\thelemma{\Alph{subsection}.\roman{lemma}}
\setcounter{proposition}{0}
\renewcommand\theproposition{\Alph{subsection}.\roman{proposition}}
\setcounter{remark}{0}
\renewcommand\theremark{\Alph{subsection}.\roman{remark}}

\subsection{Spectra at the harmonic limit}\label{app:harmonic}

To shed some light on Theorem~\ref{thm:stabsmall}, we provide here outcomes of some spectral computations for constant steady states. In what follows we use the same notation for constant-valued functions as for their actual values.
 
By the special form of the Hamiltonian $\Ham$ and the matrix  $\bB$ (Assumption \ref{as:ham}), the Lagrangian $\Lag$ reads
$$\Lag(\bU,\bU_x;\speed,\blambda)\,=\,\en(\bv,\bv_x)\,+\,\frac{\tau(\bv)}{2} \,\bu^2\,+\,\frac{\speed}{2b}\,\bv\,\bu\,\,$$
in the case $N=2$, and merely
$$\Lag(\bv,\bv_x;\speed,\blambda)\,=\,\en(\bv,\bv_x)\,+\,\frac{\speed}{2b}\,\bv^2\,\,$$
in the case $N=1$, so that its second variational derivative - or Hessian - is the differential operator
$$\hess \Lag[\bU]= \left(\begin{array}{cc}\displaystyle \hess \en[\vol]  + {\tau''(\bv)} \,\bu^2 /2 & \displaystyle {\speed}/{b} \,+\,\tau'(\bv)\,\bu\\ [5pt]
\displaystyle {\speed}/{b} \,+\,\tau'(\bv)\,\bu & \tau(\bv) \end{array}\right)\,\quad \mbox{when } N=2\,,$$
$$\hess \Lag[\bv]=\hess \en[\vol]  + {\speed}/{b}\,\quad \mbox{when } N=1\,.$$

For a traveling wave solution to $\ubU$ of \eqref{eq:absHamb} to be - neutrally - spectrally stable with respect to localized perturbations we need that the spectrum of the operator $\partial_x \,\bB\,\hess \Lag[\ubU]$ in $L^2(\R)$ lie in the left half-plane. When $\ubU$ is actually a constant $\bU_*$ this can be checked by Fourier transform. With
$\en(\bv,\bv_x)\,=\,\frac12 \cap(\bv)\bv_x^2\,+\,\free(\bv)$ we have
$$\hess \en[\vol_*]\,=\,\free''(\bv_*)\,-\,\cap(\bv_*)\,\partial_x^2\,.$$
So by Fourier transform applied to the eigenvalue problem
$$\partial_x \,(\hess \en[\vol_*] \bv \,+\,\speed \,\bv/b)\,=\,z\,\bv$$
we see that for $z$ to be in the $L^2$ spectrum of $\partial_x \;\hess \Lag[\bv_*]$ there must exist $\xi\in \R$ such that
$$z=i\,\xi\,(\free''(\bv_*)\,+\,\speed/b \,+\,\cap(\bv_*)\,\xi^2)\,,$$
which obviously forces $z$ to be purely imaginary $z$. This implies the neutral stability of $\bv_*$ in the case $N=1$. In the case $N=2$
we find in the same way that $z$ is in the $L^2$ spectrum of $\partial_x \;\hess \Lag[\bU_*]$
 if and only if $z=i\xi\,b\,\sigma$ for some $\xi\in \R$ and $\sigma$ an eigenvalue of the matrix
$$\bA_*(\xi):=
\left(\begin{array}{cc}  {\speed}/b \,+\,\tau'(\bv_*)\,\bu &  \tau(\bv_*) \\ [5pt]
 \displaystyle \free''(\bv_*)\,+\,\cap(\bv_*)\,\xi^2 + {\tau''(\bv_*)} \,\bu_*^2 /2 &  {\speed}/b \,+\,\tau'(\bv_*)\,\bu_* \end{array}\right)\,.
$$
Since $ \tau(\bv_*)$ is positive, a necessary and sufficient condition for $\sigma$ to be real and thus for $z$ to be purely imaginary is 
$$\free''(\bv_*)\,+\,\cap(\bv_*)\,\xi^2 + {\tau''(\bv_*)} \,\bu_*^2 /2\geqslant 0\,.$$
Knowing that $\cap(\bv_*)$ is positive we thus find that neutral stability of the constant steady state $\bU_*=(\bv_*,\bu_*)$ is equivalent to 
$$\free''(\bv_*)\,+ \,{\tau''(\bv_*)} \,\bu_*^2 /2\geqslant 0\,.$$
This is precisely the condition for hyperbolicity at $\bU_*$ of the underlying dispersionless system - when $\cap$ is set equal to zero
(which yields the usual barotropic Euler equations for \EKE, and the so-called $p$-system for \EKL).

Stability with respect to periodic perturbations is of a slightly different nature. Let us recall from \cite{BMR} that orbital stability with respect to periodic perturbations can be inferred from \emph{variational stability under constraints}. For a constant steady state this amounts to requiring that the spectrum of $\hess \Lag[\bU_*]$ in the space of locally square integrable, $\Xi$-periodic zero-mean functions be nonnegative - plus some additional conditions about the possible kernel. This spectrum is discrete and can be found by decomposition in Fourier series.

In the scalar case $N=1$ it is made of the numbers
$$\free''(\bv_*)\,+\,\speed/b \,+\,\cap(\bv_*)\,(2\pi\,\ell /\Xi)^2$$
for $\ell\in\Z\setminus\{0\}$.
These are all positive as soon as $\free''(\bv_*)\,+\,\speed/b\geqslant 0$. This is however not the interesting case if we think of 
$\bv_*$ as the limiting state of a small amplitude periodic wave. Indeed, such a state is characterized in terms of the potential
$$\Potential(\bv;\speed,\lambda)=\,-\,\free(\bv)\,-\, \frac{\speed}{2 b}\, \bv^2\,-\,\lambda\,\bv$$
by
$$0<\partial_{\bv}^2\Potential(\bv_*;\speed,\lambda) =\,-\,\free''(\bv_*)\,-\,\speed/b \,.$$
In this case there is a condition for $\bv_*$ to be stable with respect to $\Xi$-periodic zero-mean perturbations. Indeed, we must have
$$-\,\partial_{\bv}^2\Potential(\bv_*;\speed,\lambda) \,+\,\cap(\bv_*)\,(2\pi\,\ell /\Xi)^2\,\geqslant 0 $$
for all $\ell\in\Z\setminus\{0\}$,
which is equivalent to
$$\Xi 
\,\leqslant \,2\pi\sqrt{\frac{\cap(\bv_{*})}{\partial_{\bv}^2\Potential(\bv_*;\speed,\lambda)}}
=:\Xi_{*}\,,$$
where $\Xi_{*}$ is precisely the harmonic period. With the strict inequality we only have positive spectrum and thus stability.
In the limiting case $\Xi=\Xi_{*}$ we find a two-dimensional kernel for $\hess \Lag[\vol_*]$, corresponding to $\ell=1$ or $-1$ and thus only to harmonic waves, as expected. 

In the case $N=2$, the potential is 
$$\Potential(\bv;\speed,\blambda)=\,-\,\free(\bv)\,+\,\tfrac12 \tau(\bv)\,g(\bv;\speed,\lambda_2)^2\,-\,\lambda_1\,\bv\,,$$
with 
$$\tau(\bv)\,g(\bv;\speed,\lambda)\,=\,-\,c\,\bv/b\,+\,\lambda\,.$$
By differentiating twice this identity we readily see that
$$\tau''(\bv)\,g(\bv;\speed,\lambda)\,+\,2\,\tau'(\bv)\,\partial_\bv g(\bv;\speed,\lambda)\,+\,\tau(\bv)\,\partial_\bv^2 g(\bv;\speed,\lambda)\,=\,0\,,$$
from which we find that
$$\free''(\bv)\,+\,\tfrac12 {\tau''(\bv)} \,g(\bv;\speed,\lambda_2)^2\,=\,-\,\partial_\bv^2\Potential(\bv;\speed,\blambda)\,+\,
\frac{1}{\tau(\bv)}\,\left(\tau'(\bv)\,g(\bv;\speed,\lambda_2)+\frac{\speed}{b}\right)^2\,.$$
Therefore, we may observe that the spectrum of the matrices $\bA_*(2\pi \ell/\Xi)$ - as defined above with 
$\bu_*=g(\bv_*;\speed,\lambda_2)$
- is real for all $\ell\in\Z\setminus\{0\}$ if and only if, either $\partial_\bv^2\Potential(v_*;\speed,\blambda)\leqslant 0$ or 
$$\partial_\bv^2\Potential(v_*;\speed,\blambda)\,=\,
\frac{4\pi^2 \cap(\bv_{*})}{\Xi_*^2}
>0 \quad \mbox{and}\quad 
\left(\frac{\Xi_{*}}{\Xi}\right)^2
\geqslant 1-\frac{1}{\tau(v_*)\,\partial_\bv^2\Potential(v_*;\speed,\blambda)}\left(\tau'(v_*)u_*+\frac{\speed}{b}\right)^2\,.
$$
The latter holds in particular for $\Xi=\Xi_*$, hence the neutral stability of $\bU_*$ with respect to harmonic perturbations, even when the underlying dispersionless system is not hyperbolic. 

As regards variational stability under $\Xi$-periodic zero-mean perturbations, it requires that the symmetric matrices $\Sigma_*(2\pi\ell/\Xi)$ defined by
$$\Sigma_*(\xi):=
\left(\begin{array}{cc}  \displaystyle \free''(\bv_*)\,+\,\cap(\bv_*)\,\xi^2 + {\tau''(\bv_*)} \,\bu_*^2 /2 &  {\speed}/b \,+\,\tau'(\bv_*)\,\bu_*  \\ [5pt]
 {\speed}/b \,+\,\tau'(\bv_*)\,\bu_*  &  \tau(\bv_*) \end{array}\right)\,.
$$
be nonnegative for all $\ell\in\Z\setminus\{0\}$.
Rewriting 
$$\Sigma_*(\xi)=
\left(\begin{array}{cc}  \displaystyle -\,\partial_\bv^2\Potential(v_*;\speed,\blambda)\,+\,\cap(\bv_*)\,\xi^2\,+\,({\speed}/b \,+\,\tau'(\bv_*)\,\bu_*)^2/\tau(\bv_*) &  {\speed}/b \,+\,\tau'(\bv_*)\,\bu_*  \\ [5pt]
 {\speed}/b \,+\,\tau'(\bv_*)\,\bu_*  &  \tau(\bv_*) \end{array}\right)
$$
we see that for all $\bU=\transp{(\bv , \bu)}$
$$\bU\cdot \Sigma_*(\xi) \bU\,=\,(-\,\partial_\bv^2\Potential(v_*;\speed,\blambda)\,+\cap(\bv_*)\,\xi^2)\,\bv^2\,+\,
\frac{1}{\tau(\bv_*)}\,\left(\tau(\bv_*)\,\bu\,+\,({\speed}/b \,+\,\tau'(\bv_*)\,\bu_*)\,\bv\right)^2\,.
$$
Therefore, the matrices $\Sigma_*(2\pi\ell/\Xi)$ are nonnegative for all $\ell\in\Z\setminus\{0\}$ if and only if $\partial_\bv^2\Potential(v_*;\speed,\blambda)\leqslant \cap(\bv_*)\,2\pi/\Xi$, which is equivalent to $\Xi\leqslant \Xi_*$. As in the scalar case, we find a two-dimensional kernel corresponding to harmonic waves in the limiting case $\Xi=\Xi_{*}$.

\newpage

\subsection{Symmetry of the reduced potential}\label{app:alg}

\begin{lemma}\label{lem:symmetry}
Let $\ZZ$ be a smooth function of one variable and
$$\Redpot(\bv,\bw,\bz):=\int_{0}^{1}\!\!\!\int_{0}^{1}t\ZZ''(\bw+t(\bz-\bw)+ts(\bv-\bz))\,\dif s\dif t\,.$$
Then $\Redpot$ is a symmetric function of $(\bv,\bw,\bz)$.
\end{lemma}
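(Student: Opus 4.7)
The plan is to recognize $\Redpot$ as a Hermite--Genocchi-type integral of $\ZZ''$ over the standard $2$-simplex, which is manifestly symmetric in its three arguments. The key preliminary observation is that the argument of $\ZZ''$ inside the double integral can be rewritten as a convex combination of $\bv$, $\bw$, $\bz$: a direct rearrangement gives
$$\bw + t(\bz-\bw) + ts(\bv-\bz) \,=\, (1-t)\,\bw \,+\, t(1-s)\,\bz \,+\, ts\,\bv\,.$$
Accordingly, setting $r_0 := 1-t$, $r_1 := t(1-s)$ and $r_2 := ts$, one has $r_0, r_1, r_2 \geq 0$ with $r_0+r_1+r_2 = 1$, and the image of $(t,s) \in [0,1]^2$ under $(t,s)\mapsto (r_1,r_2)$ is the full triangle $\Delta := \{(r_1,r_2)\,:\,r_1, r_2 \geq 0,\ r_1+r_2 \leq 1\}$.

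The second step is to compute the Jacobian of this substitution. A direct computation gives
$$\frac{\partial(r_1,r_2)}{\partial(t,s)} \,=\, \det\begin{pmatrix} 1-s & -t \\ s & t \end{pmatrix} \,=\, t\,,$$
so $dr_1\, dr_2 \,=\, t\, dt\, ds$. In other words, the factor $t$ appearing in the definition of $\Redpot$ is exactly the Jacobian of this change of variables. Performing the substitution yields
$$\Redpot(\bv,\bw,\bz) \,=\, \iint_{\Delta} \ZZ''\!\left(r_0\,\bw + r_1\,\bz + r_2\,\bv\right)\, dr_1\, dr_2\,, \qquad r_0 := 1-r_1-r_2\,.$$

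The third and final step is to conclude by the symmetry of the simplex. Viewed via the barycentric coordinates $(r_0,r_1,r_2)$, the domain $\Delta$ is invariant under any permutation of those three coordinates, and the restriction of Lebesgue measure is preserved (every coordinate permutation induces a linear map with Jacobian $\pm 1$). Performing the substitution associated with a permutation $\pi$ of $\{0,1,2\}$ therefore amounts to permuting $(\bw,\bz,\bv)$ by $\pi$ in the integrand, and leaves the integral unchanged. Hence $\Redpot(\bv,\bw,\bz)$ is invariant under any permutation of its three arguments, which is the claim. No serious obstacle is anticipated: the only step requiring care is the Jacobian computation, which is elementary, together with the verification that $(t,s)\mapsto(r_1,r_2)$ indeed fills out all of $\Delta$.
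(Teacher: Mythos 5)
Your proof is correct, and it takes a genuinely different route from the paper. You recognize $\Redpot$ as the Hermite--Genocchi representation of the second divided difference: rewriting the argument as the convex combination $(1-t)\,\bw+t(1-s)\,\bz+ts\,\bv$ and substituting $(r_1,r_2)=(t(1-s),ts)$, whose Jacobian is exactly the factor $t$ in the integrand, turns $\Redpot$ into an integral of $\ZZ''$ over the standard simplex in barycentric coordinates; symmetry then follows because every permutation of the barycentric coordinates is realized by an affine, measure-preserving self-map of the simplex (your phrase ``linear map with Jacobian $\pm1$'' is slightly loose --- in the $(r_1,r_2)$ chart the transposition involving $r_0$ is affine --- but the conclusion is right, and transpositions generate all permutations). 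The paper instead integrates the double integral explicitly on the dense open set where $(\bv-\bw)(\bw-\bz)(\bz-\bv)\neq0$, obtaining the manifestly symmetric divided-difference formula
$$\Redpot(\bv,\bw,\bz)\,=\,\frac{\ZZ(\bv)}{(\bv-\bw)(\bv-\bz)}\,+\,\frac{\ZZ(\bw)}{(\bw-\bz)(\bw-\bv)}\,+\,\frac{\ZZ(\bz)}{(\bz-\bv)(\bz-\bw)}\,,$$
and concludes by continuity. What each buys: your argument works pointwise for all $(\bv,\bw,\bz)$ with no density or continuity step and no confluent cases to worry about, and it generalizes immediately to divided differences of any order; the paper's computation is shorter once one accepts the continuity argument and has the side benefit of producing the explicit closed-form rational expression for $\Redpot$.
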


\begin{proof}
By continuity of $\Redpot$, it is sufficient to establish symmetry relations on the dense and symmetric set of $(\bv,\bw,\bz)$s such that $(\bv-\bw)(\bw-\bz)(\bz-\bv)\neq 0 $. Then, for such a $(\bv,\bw,\bz)$, symmetry follows from 
$$
\begin{array}{rcl}
\Redpot(\bv,\bw,\bz)&=&\displaystyle
\int_{0}^{1}\int_{0}^{1}t\ZZ''(\bw+t(\bz-\bw)+ts(\bv-\bz))\,\dif s\dif t\\[10pt]
&=&\displaystyle
\int_{0}^{1}\frac{\ZZ'(\bw+t(\bv-\bw))-\ZZ'(\bw+t(\bz-\bw))}{\bv-\bz}\,\dif t\\[10pt]
&=&\displaystyle
\frac{\ZZ(\bv)-\ZZ(\bw)}{(\bv-\bz)(\bv-\bw))}
-\frac{\ZZ(\bz)-\ZZ(\bw)}{(\bv-\bz)(\bz-\bw)}\\[10pt]
&=&\displaystyle
\frac{\ZZ(\bv)}{(\bv-\bw)(\bv-\bz)}\,+\,\frac{\ZZ(\bw)}{(\bw-\bz)(\bw-\bv)}\,+\,\frac{\ZZ(\bz)}{(\bz-\bv)(\bz-\bw)}
\end{array}
$$
obtained by integrating twice.
\end{proof}

\subsection{Elementary asymptotics}\label{app:elas}
\begin{lemma}\label{lem:as}
Assume that $W$ is a smooth function such that 
$$W(0)=0\,,\qquad\qquad
W'(0)=0\,,\qquad\mbox{and}\qquad
W''(0)>0\,.$$
Then for small $\varepsilon>0$ there are two zeros $z_\pm$ of $W-\varepsilon$ in the neighborhood of $0$, which have the following expansions
$$z_\pm(\varepsilon)=\pm \sqrt{\frac{2\varepsilon}{\alpha}}\,-\,\frac{\beta\varepsilon}{3\alpha^2}\,\pm\,\eta\,\varepsilon^{3/2}\,+\,{\mathcal O}(\varepsilon^2)\,,$$
where $$\alpha:=W''(0)\,,\quad \beta:=W'''(0)\,,\quad \gamma:=W''''(0)\,,\quad \eta:= \frac{1}{6\alpha^3\sqrt{2\alpha}}\,(\tfrac53 \beta^2-\alpha\gamma)\,.$$
Furthermore, their derivatives expand as
$$z_\pm'(\varepsilon)=\pm \frac{1}{\sqrt{2\alpha\varepsilon}}\,-\,\frac{\beta}{3\alpha^2}\,\pm\,\frac32 \eta\,\sqrt{\varepsilon}\,+\,{\mathcal O}(\varepsilon)\,.$$
\end{lemma}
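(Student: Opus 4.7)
The plan is to reduce the implicit equation $W(z) = \varepsilon$ to a trivial one of the form $\psi(z)^2 = 2\varepsilon/\alpha$ via a smooth \emph{Morse change of variable}, and then just read off all expansions by inverting $\psi$ as a power series.

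First I would observe that since $W(0)=W'(0)=0$, Taylor's formula gives
$$W(z) \,=\, z^2\int_{0}^{1}\!(1-t)\,W''(tz)\,\dif t \,=\, \tfrac{\alpha}{2}\,z^2\,R(z),$$
with $R$ smooth near $0$ and $R(0)=1$. Since $\alpha > 0$ and $R>0$ locally, the function
$$\psi(z)\,:=\,z\,\sqrt{R(z)}$$
is smooth near $0$ with $\psi(0)=0$ and $\psi'(0)=1$, and satisfies $W(z) = \tfrac{\alpha}{2}\psi(z)^2$. By the analytic inverse function theorem (or the standard iterative construction for formal power series), $\psi$ admits a smooth local inverse $\chi$ with $\chi(0)=0$ and $\chi'(0)=1$, and the equation $W(z)=\varepsilon$ is equivalent, for $(z,\varepsilon)$ small with $\varepsilon \geqslant 0$, to
$$z\,=\,z_\pm(\varepsilon)\,:=\,\chi\!\left(\pm\sqrt{2\varepsilon/\alpha}\right).$$

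Next I would compute the Taylor coefficients of $\psi$ to order $z^3$ by expanding $R(z) = 1 + \tfrac{\beta}{3\alpha}z + \tfrac{\gamma}{12\alpha}z^2 + O(z^3)$ and then using the binomial series for $\sqrt{1+u}$:
$$\psi(z)\,=\,z + \tfrac{\beta}{6\alpha}\,z^2 + \left(\tfrac{\gamma}{24\alpha} - \tfrac{\beta^2}{72\alpha^2}\right)z^3 + O(z^4).$$
Inverting this series via $\chi(y)=y+c_2 y^2+c_3 y^3+O(y^4)$ and matching $\chi(\psi(z))=z$ yields
$$c_2\,=\,-\tfrac{\beta}{6\alpha},\qquad c_3\,=\,\tfrac{5\beta^2}{72\alpha^2}-\tfrac{\gamma}{24\alpha}.$$
Substituting $y=\pm\sqrt{2\varepsilon/\alpha}$ and simplifying $c_2\cdot(2\varepsilon/\alpha)=-\beta\varepsilon/(3\alpha^2)$ and $c_3\cdot(2\varepsilon/\alpha)^{3/2}=\pm\eta\,\varepsilon^{3/2}$ with $\eta$ as in the statement delivers the expansion of $z_\pm(\varepsilon)$. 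The expansion of $z_\pm'(\varepsilon)$ follows immediately by termwise differentiation, since it is the differentiation of a convergent power series in $\sqrt{\varepsilon}$, giving exactly
$$z_\pm'(\varepsilon)\,=\,\pm\tfrac{1}{\sqrt{2\alpha\varepsilon}} - \tfrac{\beta}{3\alpha^2} \pm \tfrac32\eta\sqrt{\varepsilon} + O(\varepsilon).$$

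The only genuine difficulty is the algebraic bookkeeping: keeping track of the three coefficients $(1,c_2,c_3)$ through the composition $\chi\circ\psi$ and then carefully rewriting $(2\varepsilon/\alpha)^{3/2}$ to recognize the specific normalization $\eta = \tfrac{1}{6\alpha^3\sqrt{2\alpha}}\bigl(\tfrac53\beta^2-\alpha\gamma\bigr)$. Everything else is a consequence of the single structural fact that $W$ has a non-degenerate minimum at $0$, which provides the smooth square-root $\psi$ and the corresponding smooth inverse $\chi$.
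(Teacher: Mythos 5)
Your proof is correct and takes essentially the same route as the paper: the paper also writes $W(z)=z^2\int_0^1(1-t)\,W''(tz)\,\dif t$, sets $w(z)=z\sqrt{W(z)/z^2}$ (which is $\sqrt{\alpha/2}\,\psi(z)$ in your normalization), inverts it near $0$ to get $z_\pm(\varepsilon)=\varphi(\pm\sqrt{\varepsilon})$ by the implicit/inverse function theorem, performs the same cubic series inversion, and obtains $z_\pm'$ by differentiating the composition and expanding the derivative of the inverse, exactly as you do. One small correction: $W$ is only assumed smooth, so appeal to the smooth inverse function theorem and Taylor expansions with ${\mathcal O}$ remainders rather than to analyticity or convergent power series in $\sqrt{\varepsilon}$ — this does not affect any step of your argument.
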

\begin{proof}
By the Taylor formula
$$
W(z)\,=\,z^2\int_{0}^{1} W''(t\,z)\,(1-t)\dif t
$$
we know that, in a neighborhood of $0$, $W$ takes nonnegative values and the function $f$ defined by
$$w(z):=z \sqrt{W(z)/z^2}$$
is smooth. Then from
$$W(z)=\frac{\alpha}{2} z^2+ \frac{\beta}{6} z^3 + \frac{\gamma}{24} z^4+ {\mathcal O}(z^5)\,,$$
the Taylor expansion of $f$ is seen to be
$$w(z)\,=\,
\sqrt{\frac{\alpha}{2}}\,\left(z + \frac{\beta}{6\alpha}z^2 + \frac{1}{24\alpha^2} \left(\gamma\alpha- \frac{\beta^2}{3}\right) z^3 + {\mathcal O}(z^4)\right) .$$
Now, since $W(z)=\varepsilon>0$ is equivalent to $w(z)=\pm \sqrt{\varepsilon}$ and $w'(0) = \sqrt{\alpha/2}\neq 0$,
the implicit function theorem implies that $W(z)=\varepsilon$ is equivalent  for $z$ close to zero  to
$z=\varphi(\pm\sqrt{\varepsilon})$ for a smooth function $\varphi$ of $x:=\pm\sqrt{\varepsilon}$ that vanishes at zero. Furthermore, we find its asymptotic expansion
$$\varphi(x)=a x + b x^2 + c x^3 + {\mathcal O}(x^4)$$ by merely equating terms of same order in
$$w(\varphi(x))= x\,.$$
This gives
\begin{itemize}
\item at order one, $a=\sqrt{2/\alpha}$,
\item at order two, $b+\frac{\beta}{6\alpha} a^2=0$, hence $b=-\frac{\beta}{3\alpha^2}$,
\item at order three $c+ \frac{\beta}{3\alpha} ab + \frac{1}{24\alpha^2} (\gamma\alpha-\frac{\beta^2}{3}) a^3=0$, hence 
$c=\frac{1}{6\alpha^3\sqrt{2\alpha}}\,(\frac53 \beta^2-\alpha\gamma)$.
\end{itemize}
This yields the claimed expansion for $z_\pm(\varepsilon):=\varphi(\pm\sqrt{\varepsilon})$.

The expansion of $z_\pm'$ turns out to correspond to term-by-term differentiation of the expansion of $z_\pm$. This 
may be justified here by first differentiating $z_\pm(\varepsilon)=\varphi(\pm\sqrt{\varepsilon})$ to receive 
$$
z_\pm'(\varepsilon)=\frac{\mp1}{2\sqrt{\varepsilon}}\,\varphi'(\pm\sqrt{\varepsilon})
$$
and then expanding $\varphi'$.
\end{proof}

The next proposition is a little bit more sophisticated in that it provides asymptotic series  that are not power series for integrals depending on parameters. These asymptotic series are of the form 
$$\sum_{k\geqslant n} (a_k \rho^k\,\ln\rho \,+\,b_k\, \rho^k)\qquad $$
for some rational 
integer $n$, when $\rho \searrow 0$.

For concision and precision, we adopt the convention that 
$$
G(\rho)\,\sim\,\sum_{k\geqslant n} g_k(\rho)
$$
denotes the fact that $f$ admits $\sum_{k\geqslant n} g_k$ as asymptotic series in the limit $\rho\to0$, that is, for any $J\geq n$
$$
G(\rho)\,=\,\sum_{n\leqslant k\leqslant J} g_k(\rho)
\,+\,{\mathcal O}(g_{J+1}(\rho))
$$
as $\rho\to0$. In the foregoing, $(g_k)_{k\geq n}$ is a sequence of functions indexed by integers.
It will be useful to have the following result at hand, the proof of which is straightforward, using that, 
when $k\geq0$, the antiderivative of $x\mapsto x^k \ln x$ that vanishes at $x=0$ is $x\mapsto x^{k+1} (\ln x - 1/(k+1))/(k+1)$.

\begin{lemma}\label{lem:derasexp}
If a differentiable function $f:(0,+\infty)\to \R$ and its derivative admit asymptotic expansions
$$\begin{array}{l}\displaystyle
G(\rho)\,\sim\,\sum_{k\geqslant 0} (a_k \rho^k\,\ln\rho \,+\,b_k\, \rho^k)\\[10pt]\displaystyle
G'(\rho)\,\sim\,\frac{d_0}{\rho}\,+\,\sum_{k\geqslant 0} (c_{k+1} \rho^k\,\ln\rho \,+\,d_{k+1}\, \rho^k)
\end{array}
$$
when $\rho \searrow 0$, then the expansion of $f'$ is obtained by formal differentiation of the expansion of $f$, and more precisely, we have
$$a_0=d_0\,,\qquad\qquad\qquad a_k=\frac{c_k}{k}\,,\qquad b_k=\frac{d_k}{k}-\frac{c_k}{k^2}\,,\quad k\geqslant 1\,,$$
or equivalently
$$d_0=a_0\,,\qquad\qquad\qquad c_k=k a_k\,,\qquad d_k= k b_k + a_k\,,\quad k\geqslant 1\,.$$
\end{lemma}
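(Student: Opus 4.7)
The plan is to recover the expansion of $G$ by integrating that of $G'$ term by term, using the antiderivative formula highlighted just before the lemma, and then to identify the coefficients by uniqueness of asymptotic developments in the scale $\{\rho^k,\,\rho^k\ln\rho\}_{k\geq 0}$. Concretely, fix some small $\rho_0>0$ in the domain where the expansion of $G'$ holds, and write, by the fundamental theorem of calculus,
\[
G(\rho) \,=\, G(\rho_0) \,+\, \int_{\rho_0}^{\rho} G'(s)\,\dif s.
\]
For any $J\geq 0$, truncate the expansion of $G'$ at order $J$ and let $R_J$ be the remainder, which satisfies $R_J(s) = \mathcal{O}(s^J |\ln s|)$ as $s\to 0^+$ by assumption. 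Using $\int s^{-1}\,\dif s = \ln s$ together with the antiderivative $\int s^{k-1}\ln s\,\dif s = s^k\ln s / k - s^k/k^2$ for $k\geq 1$, the truncated part integrates explicitly into
\[
d_0 \ln\rho \,+\, \sum_{k=1}^{J}\left(\frac{c_k}{k}\rho^k\ln\rho \,+\, \Big(\frac{d_k}{k}-\frac{c_k}{k^2}\Big)\rho^k\right) \,+\, C_J,
\]
where $C_J$ collects all boundary terms at $s=\rho_0$. Inserting this into the integral expression of $G(\rho)$ yields
\[
G(\rho) \,=\, \widetilde{C}_J \,+\, d_0\ln\rho \,+\, \sum_{k=1}^{J}\left(\frac{c_k}{k}\rho^k\ln\rho \,+\, \Big(\frac{d_k}{k}-\frac{c_k}{k^2}\Big)\rho^k\right) \,+\, \mathcal{O}(\rho^{J+1}|\ln\rho|),
\]
with $\widetilde{C}_J = G(\rho_0) + C_J$.

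Now compare with the hypothesized expansion $G(\rho) \sim \sum_{k\geq0}(a_k\rho^k\ln\rho + b_k\rho^k)$. By uniqueness of coefficients of an asymptotic expansion in this scale (proved by induction on the order, using that the monomials $\rho^k$ and $\rho^k\ln\rho$ are strictly ordered in the hierarchy of asymptotic scales as $\rho\to0^+$), one reads off $a_0 = d_0$, and for $k\geq 1$, $a_k = c_k/k$ and $b_k = d_k/k - c_k/k^2$. The constant term $b_0$ matches $\widetilde{C}_J$ and is not determined by the $c_k, d_k$; this is consistent with the statement, which does not claim any formula for $b_0$ (as it is precisely the integration constant that is invisible from $G'$).

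The only point deserving attention is the control of the integrated remainder: one needs that $\int_{\rho_0}^{\rho} s^J|\ln s|\,\dif s = \mathcal{O}(\rho^{J+1}|\ln\rho|)$ as $\rho\to 0^+$, which is elementary since for $J\geq 0$ the integrand is integrable near the origin and a direct antiderivative estimate gives the claimed bound. Beyond that, the proof is purely computational bookkeeping, and there is no real obstacle; the work is just to verify that every term produced by integration falls exactly into the announced pattern.
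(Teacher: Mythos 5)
Your proof is correct in substance and follows precisely the route the paper has in mind (the paper offers nothing beyond the hint about the antiderivative of $x^{k}\ln x$): integrate the expansion of $G'$ term by term and identify the coefficients by uniqueness of asymptotic expansions in the scale $\{\rho^{k},\rho^{k}\ln\rho\}_{k\geqslant 0}$. One step is misstated, though: $\int_{\rho_0}^{\rho} s^{J}|\ln s|\,\dif s$ is \emph{not} $\mathcal{O}(\rho^{J+1}|\ln\rho|)$; since the integrand is integrable at $0$, this quantity converges, as $\rho\searrow 0$, to the nonzero constant $-\int_0^{\rho_0}s^{J}|\ln s|\,\dif s$. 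The correct bookkeeping is to split the integrated remainder as $\int_{\rho_0}^{\rho}R_J(s)\,\dif s=-\int_0^{\rho_0}R_J(s)\,\dif s+\int_0^{\rho}R_J(s)\,\dif s$, absorb the first (constant) term into $\widetilde C_J$ --- it only feeds into $b_0$, about which the lemma claims nothing --- and observe that only the second term is genuinely $\mathcal{O}(\rho^{J+1}|\ln\rho|)$. With this one-line correction your displayed expansion of $G$ and the identifications $a_0=d_0$, $a_k=c_k/k$, $b_k=d_k/k-c_k/k^{2}$ for $1\leqslant k\leqslant J$ (hence for all $k$, since $J$ is arbitrary) stand exactly as written.
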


Note that if $G$ is such that 
$$
G'(\rho)\,\sim\,\sum_{k\geqslant 0} (c_{k+1} \rho^k\,\ln\rho \,+\,d_{k+1}\, \rho^k)
$$
when $\rho\to 0$ then by a direct integration one concludes that as $\rho\to0$
$$
G(\rho)\,\sim\,b_0\,+\,\sum_{k\geqslant 1} (a_k \rho^k\,\ln\rho \,+\,b_k\, \rho^k)
$$
with $b_0=\lim_{\rho\to0}G(\rho)$ and the former lemma may then be applied.

\begin{proposition}\label{prop:asol}
Let $f$, $g$, and $h$ be smooth integrable functions on $[0,1)$, and consider the following integrals depending on a parameter $\rho>0$,
$$F(\rho):=\int_{0}^{1} f(x) \,\sqrt{x(x+\rho)}\,\dif x,\qquad
G(\rho):=\int_{0}^{1} \frac{g(x)\,\dif x}{\sqrt{x(x+\rho)}},\qquad
H(\rho):=\int_{0}^{1} \frac{h(x)\,\dif x}{\sqrt{x(x+\rho)^3}}\,.$$
They have asymptotic expansions of the form
$$F(\rho)\sim B_0+B_1 \rho+\sum_{k\geqslant 2} (A_k \rho^k\,\ln\rho \,+\,B_k\, \rho^k)\,,\qquad
G(\rho)\sim\sum_{k\geqslant 0} (a_k \rho^k\,\ln\rho \,+\,b_k\, \rho^k)\,,$$
and
$$H(\rho)\sim \frac{d_0}{\rho}\,+\,\sum_{k\geqslant 0} (c_{k+1} \rho^k\,\ln\rho \,+\,d_{k+1}\, \rho^k)$$
when $\rho \searrow 0$, in which the first coefficients
for $F$ and $G$ 
are given respectively by
$$B_0=\int_{0}^{1} xf(x)\,\dif x\,,\qquad\quad
B_1=\frac{1}{2}\,\int_{0}^{1} f(x)\,\dif x\,,\qquad\quad 
A_2=\frac18 f(0)\,,$$
and by
$$
\begin{array}{l}\displaystyle
a_0=-\,g(0)\,,\qquad\quad b_0=
\int_{0}^{1}\!\int_{0}^{1}\ g'(sx)\,\dif s\,\dif x\,+\,g(0)\,\ln 4\,,\qquad\quad a_1=\,g'(0)/2\,,\\[5pt]\displaystyle
b_1=\frac12\left(g(0)\,+\,(1-\ln4)\,g'(0)
\,-\, 
\int_{0}^{1}\!\int_{0}^{1}(1-s)\,g''(sx)\,\dif s\,\dif x\right)\,,\quad a_2=-\frac{3}{16}g''(0)\,,\\[5pt]\displaystyle
b_2=\frac{1}{16}\,\left(-3g(0)-6g'(0)+\tfrac12 g''(0)(-7+6\ln4)\,+\,6\,\int_{0}^{1}\!\int_{0}^{1}\frac{(1-s)^2}{2}\,g'''(sx)\,\dif s\,\dif x\right)\,.
\end{array}$$
\end{proposition}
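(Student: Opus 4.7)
The plan is to reduce all three expansions to a single careful study of $G$, and to obtain those of $F$ and $H$ by integration and differentiation respectively. The pivotal identities are
$$F'(\rho)\,=\,\tfrac12\,G_{xf}(\rho)\,,\qquad\qquad G'(\rho)\,=\,-\tfrac12\,H_g(\rho)\,,$$
where the subscript indicates the function playing the role of $g$ in $G$. By Lemma~\ref{lem:derasexp} (applied with $g=h$), the expansion of $H$ will follow by term-by-term differentiation of that of $G_h$. Integrating between $0$ and $\rho$ using $\int_0^\rho s^k\ln s\,ds=\tfrac{\rho^{k+1}}{k+1}\ln\rho-\tfrac{\rho^{k+1}}{(k+1)^2}$ will yield the expansion of $F$, with integration constant $F(0)=\int_0^1 xf(x)\,dx=B_0$ and $B_1=F'(0)=\tfrac12\int_0^1 f(x)\,dx$. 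Notably, since $G_{xf}$ applies to $g(x)=xf(x)$ which vanishes at the origin, its $a_0^{xf}$ coefficient vanishes and $F$ has no $\ln\rho$ term at order $\rho^0$ or $\rho^1$; the coefficient $A_2=\tfrac14\,a_1^{xf}=\tfrac{1}{8}f(0)$ then drops out of the integration of $a_1^{xf}\rho\ln\rho$.

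The core of the argument is thus the expansion of $G$. I would build it around the auxiliary family
$$I_k(\rho)\,:=\,\int_0^1\frac{x^k\,dx}{\sqrt{x(x+\rho)}}\,,\qquad k\in\N\,.$$
The substitution $u=\sqrt{x}$ produces the closed form $I_0(\rho)=-\ln\rho+2\ln(1+\sqrt{1+\rho})$, whose Taylor expansion $-\ln\rho+\ln 4+\rho/2-3\rho^2/16+\cdots$ is elementary. Integrating $\tfrac{d}{dx}\bigl(x^{k+1/2}\sqrt{x+\rho}\bigr)$ on $[0,1]$ yields the recursion
$$I_{k+1}(\rho)\,=\,\frac{\sqrt{1+\rho}}{k+1}\,-\,\frac{(2k+1)\rho}{2(k+1)}\,I_k(\rho)\,,$$
from which an induction on $k$ shows that each $I_k$ admits an asymptotic expansion of the required form, containing exactly one logarithmic contribution $\rho^k\ln\rho$ plus a polynomial tail in $\rho$; the leading coefficients $a_k^{I_k},b_k^{I_k},\dots$ are then extracted by routine algebra.

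Passing from the $I_k$'s to $G$ itself is done via Taylor's formula with integral remainder:
$$g(x)\,=\,\sum_{k=0}^{N-1}\frac{g^{(k)}(0)}{k!}\,x^k\,+\,x^N\,\tilde g_N(x)\,,\qquad \tilde g_N\in C^\infty([0,1])\,,$$
giving
$$G(\rho)\,=\,\sum_{k=0}^{N-1}\frac{g^{(k)}(0)}{k!}\,I_k(\rho)\,+\,\int_0^1\frac{x^{N-1/2}\tilde g_N(x)}{\sqrt{x+\rho}}\,dx\,.$$
The remainder integral has exactly the same structure as the original $G$, but with an extra factor $x^N$, so iterating the same decomposition on $\tilde g_N$ produces a total remainder of size $o(\rho^M\ln\rho)$ for any prescribed $M$. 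Matching orders yields $a_0=-g(0)$ from $I_0$, and $b_0=g(0)\ln 4+\int_0^1(g(x)-g(0))/x\,dx$, rewritten as the stated double integral via $u=sx$; the higher explicit coefficients $a_1,b_1,a_2,b_2$ are recovered by combining the contributions of $I_0,I_1,I_2$ with the Taylor coefficients of $g$ up to order $2$, the residual integral term in $b_2$ capturing the contributions of $I_k$, $k\geq 3$, through the Peano form of the remainder.

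The main technical obstacle will be the rigorous control of the Taylor remainder: one must certify that its expansion begins at least at order $\rho^{\lceil N/2\rceil}\ln\rho$ (rather than merely being bounded), so that truncating at some finite $N$ does not pollute the coefficients $a_k,b_k$ that we claim explicit. This is handled exactly by the iterative structure above---each application of Taylor's formula gains an extra factor of $x$ in the integrand, and via the $I_k$-recursion this translates into an extra factor of $\rho$ in the remainder expansion---so that any prescribed order can be reached in finitely many steps. Once this is settled, all stated coefficients follow from the recursion by a tedious but entirely mechanical computation.
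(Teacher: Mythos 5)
Your overall architecture is the same as the paper's: Taylor-expand $g$ at the origin, reduce to the moment integrals $I_k(\rho)=\int_0^1 x^k(x(x+\rho))^{-1/2}\dif x$, and treat $F$ through $F'=\tfrac12 G_{xf}$ followed by integration of the expansion (your recursion $I_{k+1}=\tfrac{\sqrt{1+\rho}}{k+1}-\tfrac{(2k+1)\rho}{2(k+1)}I_k$ is a perfectly good substitute for the paper's closed-form computation of the moments). However, the step you yourself flag as ``the main technical obstacle'' is treated incorrectly, and it is exactly where the real work lies. The remainder $R_N(\rho)=\int_0^1 \tilde g_N(x)\,x^N\,(x(x+\rho))^{-1/2}\dif x$ does \emph{not} ``begin at order $\rho^{\lceil N/2\rceil}\ln\rho$'': it tends to $\int_0^1 \tilde g_N(x)x^{N-1}\dif x\neq 0$ as $\rho\to0$ and contributes to \emph{every} coefficient $b_k$ with $k<N$ --- the double integrals in the stated $b_0,b_1,b_2$ are precisely these contributions, and your own formula $b_0=g(0)\ln 4+\int_0^1(g(x)-g(0))x^{-1}\dif x$ uses the value of the remainder at $\rho=0$, contradicting your claim. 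Iterating the $x$-Taylor decomposition cannot repair this: each iteration produces a new remainder with again a nonzero limit, each newly extracted $I_k$ feeds back into all the low-order $b_j$'s, and for merely smooth $g$ the iteration does not converge (the iterated divided differences are uncontrolled), so ``$o(\rho^M\ln\rho)$ total remainder'' is never reached this way. What has to be proved is that $R_N$ equals a polynomial of degree $N-1$ in $\rho$, with computable coefficients, up to an $O(\rho^N\ln\rho)$ error; the paper obtains this by differentiating $N-1$ times under the integral sign (legitimate since those $\rho$-derivatives remain integrable), checking that the $N$-th derivative is $O(\ln\rho)$, hence integrable, and then applying Taylor's formula in $\rho$ with integral remainder. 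Some equivalent device is indispensable, and your proposal contains none.

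A second gap concerns $H$. Obtaining its expansion by ``term-by-term differentiation'' of that of $G_h$ via Lemma~\ref{lem:derasexp} inverts the logic of that lemma: it identifies the coefficients \emph{when both} the function and its derivative are already known to admit expansions of the stated form; it does not grant that $G_h'=-\tfrac12 H_h$ has one, and differentiation of asymptotic expansions is not valid in general. Since for $H$ the proposition asserts precisely the \emph{existence} of the expansion, your argument begs the question. You need a direct proof, e.g.\ run your $G$-analysis on the kernel $x^{-1/2}(x+\rho)^{-3/2}$, or, as in the paper, use the identity $\int_0^1 x^j(x(x+\rho)^3)^{-1/2}\dif x=\int_0^1 x^{j-1}(x(x+\rho))^{-1/2}\dif x-\rho\int_0^1 x^{j-1}(x(x+\rho)^3)^{-1/2}\dif x$ together with the explicit value of $\int_0^1 x^{-1/2}(x+\rho)^{-3/2}\dif x$; the paper's closing remark invokes Lemma~\ref{lem:derasexp} only to \emph{compute} the coefficients once existence is settled. (Minor point: under the hypotheses, $\tilde g_N$ is smooth only on $[0,1)$ and integrable --- cf.\ Remark~\ref{rem:asol} --- not $C^\infty([0,1])$; this is harmless because only boundedness near $0$ and integrability on $(0,1)$ are needed, but the remainder analysis should be phrased that way.)
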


For practical use, note that all double integrals appearing above may be reformulated as simple integrals by using Taylor formulas. For instance, 
$$
\int_{0}^{1}\!\int_{0}^{1}\ g'(sx)\,\dif s\,\dif x
\,=\,\int_{0}^{1}\frac{g(x)-g(0)}{x}\,\dif x\,.
$$

\begin{remark}\label{rem:asol}
Note that the assumptions on $f$, $g$ and $h$ are tailored to include functions given as $g(x)=\check{g}(x)/\sqrt{1-x}$ with $\check{g}$ a smooth function on $[0,1]$.
\end{remark}

\begin{proof} We start with the central case of $G$.
Let us define by induction $g_0(x):=g(x)$ and $g_{j+1}(x)=(g_j(x)-g_j(0))/x$ for all $j\geqslant 0$ and $x\neq 0$. All these functions have smooth continuations up to $x=0$ if $g$ is ${\mathcal C}^\infty$ at $x=0$. 
Those continuations are smooth and integrable on $[0,1)$. 
By induction we obtain, for any $J\geq0$,
\begin{equation}\label{eq:expG}
G(\rho)=\sum_{j=0}^{J} g_j(0)\,\int_{0}^{1} \frac{x^j}{\sqrt{x(x+\rho)}}\,\dif x\,+\,\int_{0}^{1} g_{J+1}(x)\,\frac{x^{J+1}}{\sqrt{x(x+\rho)}}\,\dif x\,,
\end{equation}
with $g_j(0):=g^{(j)}(0)/j!$.
In \eqref{eq:expG} the factors of $g_j(0)$ may be computed explicitly. Indeed, after a series of standard changes of variables, which eventually amounts to introducing the variable 
$$
s\,=\,\exp\left(\mbox{argcosh}\left(\frac{x}{\rho/2}+1\right)\right)
\,=\,\frac{2x+\rho+2\sqrt{x(x+\rho)}}{\rho}\,,
$$
we find that
$$
\int_{0}^{1} \frac{x^j}{\sqrt{x(x+\rho)}}\,\dif x\,=\,
\left(\frac{\rho}{4}\right)^j\int_{1}^{(2+\rho+2\sqrt{1+\rho})/\rho} (s^{1/2}-s^{-1/2})^{2j}\,s^{-1}\,\dif s
$$
thus
$$\int_{0}^{1} \frac{x^j}{\sqrt{x(x+\rho)}}\,\dif x\begin{array}[t]{l}\,=\,\begin{array}[t]{l}\displaystyle
{2j \choose j}\,(-1)^j\,(\rho/4)^j\,
\left[\ln\left((2x+\rho+2\sqrt{x(x+\rho)})/\rho\right)\right]_{x=0}^{x=1} \\ [\jot]
+ \displaystyle\sum_{\underset{k\neq j}{0\leqslant k \leqslant 2j}} {2j \choose k}\,
\frac{(-\rho)^{2j-k}}{(k-j)\,4^j}\,
\left[(2x+\rho+2\sqrt{x(x+\rho)})^{(k-j)}\right]_{x=0}^{x=1}
\end{array}\\
\,=\,\begin{array}[t]{l}\displaystyle
{2j \choose j}\,(-1)^j\,(\rho/4)^j\,\left(\ln(2+\rho+2\sqrt{1+\rho})\,-\,\ln\rho\right) \\ [\jot]
+ \displaystyle\sum_{\underset{k\neq j}{0\leqslant k \leqslant 2j}} {2j \choose k}\,
\frac{(-\rho)^{2j-k}}{(k-j)\,4^j}\,
\left((2+\rho+2\sqrt{1+\rho})^{(k-j)}\,-\,\rho^{(k-j)}\right).
\end{array}
\end{array}
$$
Since both the function $\rho\mapsto\ln(2+\rho+2\sqrt{1+\rho})$ and, for $n$ any rational integer, the function $\rho\mapsto(2+\rho+2\sqrt{1+\rho})^{n}$  have power series expansions as $\rho\to0$, this shows that
\begin{equation}\label{eq:expxj}
\int_{0}^{1} \frac{x^j}{\sqrt{x(x+\rho)}}\,=\,*\,\rho^j\,\ln\rho \,+\,\sum_{k=0}^{J} *\,\rho^k \,+\,{\mathcal O}(\rho^{J+1})\,,
\end{equation}
where we have denoted for simplicity by $*$ the coefficients that can be derived from the explicit formula above and the expansion of the aforementioned functions. 

We must also deal with the last integral in \eqref{eq:expG}, which is of the form
$$I_j(\rho)=\int_{0}^{1} \varphi(x)\,\frac{x^{j}}{\sqrt{x(x+\rho)}}\,\dif x\,,$$
with $\varphi=g_{J+1}$ smooth and integrable on $[0,1)$ and $j=J+1\geqslant 1$. For any $k\leqslant j-1$ the $k^{\mbox{\tiny th}}$ derivative of $I_j$ may be obtained by differentiating under the $\int$ sign. This yields for $k$ as above and any $\rho\geq0$
$$I_j^{(k)}(\rho)=\left(\tfrac{-1}{2}\right)\left(\tfrac{-3}{2}\right)\cdots\left(\tfrac{-2k+1}{2}\right)\,\int_{0}^{1} \varphi(x)\,\frac{x^{j-1/2}}{(x+\rho)^{k+1/2}}\,\dif x\,,$$
hence in particular
$$I_j^{(k)}(0)=
\frac{(-1)^k\times(2k)!}{4^k\times k!}
\,\int_{0}^{1} \varphi(x)\,x^{j-1-k}\,\dif x\,.$$
At any $\rho>0$ we can differentiate $I_j^{(j-1)}$ once more, and we receive
$$I_j^{(j)}(\rho)=\left(\tfrac{-1}{2}\right)\left(\tfrac{-3}{2}\right)\cdots\left(\tfrac{-2j+1}{2}\right)\,\int_{0}^{1} \varphi(x)\,\left(\frac{x}{x+\rho}\right)^{j}\,\frac{\dif x}{\sqrt{x(x+\rho)}}\,.$$
Provided that $\varphi$ is bounded near zero and integrable on $(0,1)$, this last integral is an 
$${\mathcal O}\left(\int_{0}^{1} \frac{\dif x}{\sqrt{x(x+\rho)}}\right)\,,$$
which is itself an ${\mathcal O}(\ln \rho)$. Therefore, $I_j^{(j)}$ is integrable on $[0,1]$. This enables us to write by Taylor's formula that
$$I_j(\rho)\,\begin{array}[t]{l}
\displaystyle =\,\sum_{k=0}^{j-1} \frac{I_j^{(k)}(0)}{k!}\,\rho^k\,+\,\int_{0}^{\rho} \frac{(\rho-r)^{j-1}}{(j-1)!}\,I_j^{(j)}(r)\,\,\dif r\\ [\jot]
\displaystyle =\,\sum_{k=0}^{j-1} \frac{I_j^{(k)}(0)}{k!}\,\rho^k\,+\,{\mathcal O}(\rho^j \,\ln \rho)\,.\end{array}$$
Thus we have
$$\int_{0}^{1} g_{J+1}(x)\,\frac{x^{J+1}}{\sqrt{x(x+\rho)}}\,\dif x\,=\,
\sum_{k=0}^{J}\frac{(-1)^k\,\rho^k\times(2k)!}{4^k\times (k!)^2}
\,\int_{0}^{1} g_{J+1}(x)\,x^{J-k}\,\dif x
\,+\,
{\mathcal O}(\rho^{J+1}\,\ln\rho)\,,$$
where the estimate depends only on a bound of $g_{J+1}$ near zero and on $\|g_{J+1}\|_{L^1}$.

In combination with \eqref{eq:expG} and \eqref{eq:expxj}, this shows altogether the existence of coefficients $a_j$, $b_j$ such that
\begin{equation}\label{eq:expGcut}
G(\rho)=\sum_{j=0}^{J}  (a_j \rho^j\,\ln\rho \,+\,b_j\, \rho^j)\,+\,{\mathcal O}(\rho^{J+1}\,\ln\rho)\,.
\end{equation}
The actual computation of $a_j$, $b_j$ is technical but not difficult. 
In order to compute just  $a_j$, $b_j$  for $j=0,1,2$, we make explicit the foregoing process with $J=2$. On the one hand this yields in the limit $\rho\to0$
$$
\begin{array}{rcl}\displaystyle
\int_{0}^{1} \frac{\dif x}{\sqrt{x(x+\rho)}}&=&\displaystyle
-\ln\rho \,+\,\ln 4\,+\,\dfrac{\rho}{2}\,-\,\dfrac{3\rho^2}{16}\,+\,{\mathcal O}(\rho^3)\\[5pt]\displaystyle
\int_{0}^{1} \frac{x\,\dif x}{\sqrt{x(x+\rho)}}&=&\displaystyle
\frac\rho2\ln\rho \,-\,\frac\rho2\ln 4\,+\,1\,+\,\dfrac{\rho}{2}\,-\,\dfrac{3\rho^2}{8}\,+\,{\mathcal O}(\rho^3)\\[5pt]\displaystyle
\int_{0}^{1} \frac{x^2\,\dif x}{\sqrt{x(x+\rho)}}&=&\displaystyle
-\dfrac{3\rho^2}{8}\ln\rho \,+\,\dfrac{3\rho^2}{8}\ln 4\,+\,\frac12\,-\,\dfrac{\rho}{2}\,-\,\dfrac{7\rho^2}{16}\,+\,{\mathcal O}(\rho^3)
\end{array}
$$
since
$$
\begin{array}{rcl}\displaystyle
\ln(2+\rho+2\sqrt{1+\rho})&=&\displaystyle
\ln 4\,+\,\dfrac{\rho}{2}\,-\,\dfrac{3\rho^2}{16}\,+\,{\mathcal O}(\rho^3)\\[5pt]\displaystyle
2+\rho+2\sqrt{1+\rho}&=&\displaystyle
4\,+\,2\rho-\frac{\rho^2}{4}\,+\,{\mathcal O}(\rho^3)\,.
\end{array}
$$
On the other hand, from the definitions of $g_1$, $g_2$ and $g_3$, as $\rho\to0$,
$$
\begin{array}{rcl}\displaystyle
\int_{0}^{1}\frac{g_{3}(x)\,x^{3}}{\sqrt{x(x+\rho)}}\,\dif x
&=&\displaystyle
-\,g_1(0)\,-\,\frac12\,g_2(0)\,+\,\frac\rho2\,g_2(0)
\,+\,\int_{0}^{1}g_{1}(x)\,\dif x
\\[5pt]\displaystyle
&&\displaystyle
\,-\,\frac\rho2\int_{0}^{1}g_{2}(x)\,\dif x
\,+\,\frac{3\rho^2}{8}\int_{0}^{1}g_{3}(x)\,\dif x
\,+\,{\mathcal O}(\rho^3\,\ln\rho)\,.
\end{array}
$$
This eventually yields
$$G(\rho)\,=\begin{array}[t]{l}\displaystyle
\,-\,g(0)\,\ln\rho \,+\,\int_{0}^{1} g_1(x)\,\dif x\,+\,g(0)\,\ln 4\,+\,g_1(0)\,\dfrac{\rho}{2}\,\ln\rho\\
[15pt] \,+\,\displaystyle
\frac{\rho}{2}\,\left(g(0)\,+\,(1-\ln4)\,g_1(0)\,-\, \int_{0}^{1} g_2(x)\,\dif x\right)
\,-\,\frac{3}{8} \,g_2(0)\,\rho^2\,\ln \rho \\ [15pt]
\displaystyle
\,+\,\frac{\rho^2}{16}\,\left(-3g(0)-6g_1(0)+g_2(0)(-7+6\ln4)\,+\,6\,\int_{0}^{1}g_3(x)\,\dif x\right)
\,+\,{\mathcal O}(\rho^3\ln\rho)\,.\end{array}$$
We complete the computation of the first coefficients in the expansion of $G$ by substituting 
for $g_1$, $g_2$ and $g_3$ their expressions in terms of derivatives of $g$ obtained from the Taylor formulas.

As regards the expansion of $F$, we may note that for $\rho>0$, 
$$F'(\rho)\,=\,\frac12 \int_{0}^{1} f(x)\,\sqrt{\frac{x}{x+\rho}}\,\dif x\,,$$
which is nothing but $G(\rho)$ for $g(x)=xf(x)/2$.
Expanding this specific $G$ 
and applying Lemma~\ref{lem:derasexp} as in the comment following it we conclude to the existence of the claimed expansion for $F$ with 
$$
B_0=\lim_{\rho\to0}F(\rho)=\int_{0}^{1} f(x)x\,\dif x\,,\qquad\qquad
A_1=a_0=-g(0)=0\,,
$$
and
$$
B_1=b_0-a_0=\frac12\int_{0}^{1} f(x)\,\dif x\,,\qquad\qquad\qquad
A_2=\frac{a_1}{2}=\frac{f(0)}{8}\,.
$$

Regarding the expansion of $H$, it can be justified as the one of $G$ by decomposing as
$$
H(\rho)=\sum_{j=0}^{J} h_j(0)\,\int_{0}^{1} \frac{x^j}{\sqrt{x(x+\rho)^3}}\,\dif x\,+\,\int_{0}^{1} h_{J+1}(x)\,\frac{x^{J+1}}{\sqrt{x(x+\rho)^3}}\,\dif x\,,
$$
for any $J\geq0$, where $h_j$s are defined by induction as $h_0:=h$ and for all $j\geqslant 0$, $h_{j+1}(0)=h_j'(0)$ and $h_{j+1}(x)=(h_j(x)-h_j(0))/x$ for all $x\neq 0$.
Indeed the existence of the claimed expansion then follows by arguing recursively from on the one hand, for $j\geq1$
$$
\begin{array}{rcl}\displaystyle
\int_{0}^{1} \frac{x^j}{\sqrt{x(x+\rho)^3}}\,\dif x&=&\displaystyle
\int_{0}^{1} \frac{x^{j-1}}{\sqrt{x(x+\rho)}}\,\dif x\,-\,\rho\,
\int_{0}^{1} \frac{x^{j-1}}{\sqrt{x(x+\rho)^3}}\,\dif x\\[10pt]
\displaystyle
\int_{0}^{1} \varphi(x)\, \frac{x^{j}\,\dif x}{\sqrt{x(x+\rho)^3}}&=&\displaystyle
\int_{0}^{1} \varphi(x)\, \frac{x^{j-1}\,\dif x}{\sqrt{x(x+\rho)}}\,-\,\rho\,
\int_{0}^{1} \varphi(x)\,\frac{x^{j-1}\,\dif x}{\sqrt{x(x+\rho)^3}}
\end{array}
$$
and, on the other hand,
$$
\begin{array}{rcl}\displaystyle
\int_{0}^{1} \frac{\dif x}{\sqrt{x(x+\rho)^3}}\,&=&\displaystyle
\frac{4}{\rho}\int_{1}^{(2+\rho+2\sqrt{1+\rho})/\rho} \frac{\dif s}{(s+1)^2}\,=\,\frac{2}{\rho}\,-\,\frac{2}{1+\rho+\sqrt{1+\rho}}
\\[10pt]
\displaystyle
\int_{0}^{1}\frac{\varphi(x)\,\dif x}{\sqrt{x(x+\rho)^3}}&=&\displaystyle
{\mathcal O}\left(\frac1\rho\right)
\end{array}
$$
and the fact that the function $\rho\mapsto1/(1+\rho+\sqrt{1+\rho})$ has a power series expansion.
\end{proof}

Note that if we were willing to compute the coefficients of the expansion of $H$ we could rely on Lemma~\ref{lem:derasexp}
and use that $G'\,=\,H$ provided $g=-2h$.

\bibliographystyle{plain}
\addcontentsline{toc}{section}{References}

\end{document}